\newtheorem{proposition}{Proposition}[section]
\newtheorem{lemma}[proposition]{Lemma}
\newtheorem{corollary}[proposition]{Corollary}
\newtheorem{theorem}[proposition]{Theorem}
\theoremstyle{definition}
\theoremstyle{remark}
\newtheorem{remark}[proposition]{Remark}
\newcommand{\thlabel}[1]{\label{th:#1}}
\newcommand{\thref}[1]{Theorem~\ref{th:#1}}
\newcommand{\selabel}[1]{\label{se:#1}}
\newcommand{\seref}[1]{Section~\ref{se:#1}}
\newcommand{\lelabel}[1]{\label{le:#1}}
\newcommand{\leref}[1]{Lemma~\ref{le:#1}}
\newcommand{\prlabel}[1]{\label{pr:#1}}
\newcommand{\prref}[1]{Proposition~\ref{pr:#1}}
\newcommand{\colabel}[1]{\label{co:#1}}
\newcommand{\coref}[1]{Corollary~\ref{co:#1}}
\newcommand{\talabel}[1]{\label{ta:#1}}
\newcommand{\taref}[1]{Table~\ref{ta:#1}}
\newcommand{\relabel}[1]{\label{re:#1}}
\newcommand{\reref}[1]{Remark~\ref{re:#1}}
\newcommand{\eqlabel}[1]{\label{eq:#1}}
\newcommand{\equref}[1]{(\ref{eq:#1})}
\def\ra{\rightarrow}
\def\Id{{\rm Id}}
\def\im{{\rm Im}\,}
\def\Ker{{\rm Ker}\,}
\def\Ext{{\rm Ext}}
\def\Tor{{\rm Tor}}
\def\Hom{{\rm Hom}}
\def\NN{{\mathbb N}}
\def\ot{\otimes}
\def\va{\varepsilon}
\def\un{\underline}
\def\le{\langle}
\def\ri{\rangle}
\def\va{\varepsilon}
\def\v{\varphi}
\def\ra{\rightarrow}
\def\a{\alpha}
\def\b{\beta}
\def\d{\delta}
\def\ov{\overline}
\def\cal{\mathcal}
\def\un{\underline}
\def\units{{\mathbb G}_m}
\def\ZZ{{\mathbb Z}}
\newcommand{\Cc}{\cal C}
\newcommand{\Rr}{\cal R}
\def\equal#1{\smash{\mathop{=}\limits^{#1}}}
\begin{document}
\title[The braidings of ${\rm Vect}^G$]
{The braided monoidal structures on the category of vector spaces graded by the Klein group}
\author{D. Bulacu}
\address{Faculty of Mathematics and Informatics, University
of Bucharest, Str. Academiei 14, RO-010014 Bucharest 1, Romania}
\email{daniel.bulacu@fmi.unibuc.ro}
\author{S. Caenepeel}
\address{Faculty of Engineering, 
Vrije Universiteit Brussel, B-1050 Brussels, Belgium}
\email{scaenepe@vub.ac.be}
\urladdr{http://homepages.vub.ac.be/\~{}scaenepe/}
\author{B. Torrecillas}
\address{Department of Algebra and Analysis\\
Universidad de Almer\'{\i}a\\
E-04071 Almer\'{\i}a, Spain}
\email{btorreci@ual.es}
\thanks{ 
This paper was written while the first author was visiting the Universidad 
de Almer\'{\i}a and the Vrije Universiteit Brussel. 
He would like to thank both universities for their warm hospitality. The first author was 
financially supported by FWO GP.045.09N and by CNCSIS $479/2009$, code ID 1904. 
This research was supported by the FWO project G.0117.10 ``Equivariant Brauer groups
and Galois deformations".}

\begin{abstract}
Let $k$ be a field, $k^*=k\setminus\{0\}$ and $C_2$ the cyclic group of order $2$. 
In this note we compute all the braided monoidal structures on the category of 
$k$-vector spaces graded by the Klein group $C_2\times C_2$. Actually, for the 
monoidal structures we will compute the explicit form of the $3$-cocycles on 
$C_2\times C_2$ with coefficients in $k^*$, 
while for the braided monoidal structures we will compute the explicit 
form of the abelian $3$-cocycles on $C_2\times C_2$ with coefficients in $k^*$. 
In particular, this will allow us to produce examples of quasi-Hopf algebras 
and weak braided Hopf algebras, out of the vector space $k[C_2\times C_2]$.    
\end{abstract}
\maketitle
\section{Introduction}\selabel{intro}
\setcounter{equation}{0}
For $k$ a field and $G$ a group it is well-known that the category of $G$-graded vector spaces
${\rm Vect}^G$ is a monoidal category. Now consider monoidal structures on ${\rm Vect}^G$,
with the same tensor product, unit object $k$, and the natural unit constraints, but with different
associativity constraints. These monoidal structures are in bijective correspondence
with the normalized $3$-cocycles on $G$ with  coefficients in $k^*=k\setminus \{0\}$.
In the case where $G$ is abelian,
braided monoidal structures on ${\rm Vect}^G$ are in bijective correspondence with
{\sl abelian} 3-cocycles, these are normalized 3-cocycles together with a so-called
R-matrix. Isomorphism classes of braided monoidal structures are then classified
by the cohomology group $H^3_{\rm ab}(G, k^*)$, which is isomorphic to the group of
quadratic forms $QF(G, k^*)$, by a result of Eilenberg and Mac Lane.

Associative algebras in ${\rm Vect}^G$ with one of these monoidal structures are usually
not associative in the usual sense; remarkable examples include Cayley-Dickson algebras
and Clifford algebras: in \cite{am0, am2}, Albuquerque and Majid show that they are
associative algebras in a suitable symmetric monoidal category of graded vector spaces.
Other examples are given by superalgebras, these are algebras in ${\rm Vect}^{C_2}$,
or, more generally, associative algebras in the category of anyonic vector spaces ${\rm Vect}^{\rm C_n}$,
with a suitable monoidal structure.

In the case where $G$ is a cyclic group,
the classification of braided monoidal structures on ${\rm Vect}^{G}$ is presented
in \cite{js, jsp}. The monoidal structures on ${\rm Vect}^{\mathbb{Z}}$ are all trivial, and the
braided monoidal structures are given by the $R$-matrices
${\cal R}_\a: \mathbb{Z}\times \mathbb{Z}\ra k^*$, 
${\cal R}_\a(x, y)=\a^{xy}$, with $\alpha\in k^*$.  
For a finite cyclic group, we have
$H^3(\mathbb{Z}_n, k^*)\cong \mu_n(k)$, the group of $n$-th roots of $1$. The cohomology class corresponding
to $q\in \mu_n(k)$ is represented by the normalized $3$-cocycle
\begin{equation}\eqlabel{3coccyc}
\phi_q(x, y, z)=\left\{
\begin{array}{lr}
1&\mbox{if $y+z<n$}\\
q^x&\mbox{if $y+z\geq n$}
\end{array} \right. \hspace{2mm},
\mbox{for all $x, y, z\in \{0, 1, \cdots , n - 1\}$.}
\end{equation}
The braidings on ${\rm Vect}^{\mathbb{Z}_n}$ are represented by abelian cocycles 
$(\phi_{\nu^n}, {\cal R}_\nu)$ 
with $\nu\in k^*$ such that $\nu^{n^2}=\nu^{2n}=1$, 
where  
${\cal R}(x, y)=\nu^{xy}$, for all $x, y\in \{0, \cdots, n-1\}$. Notice that 
the case $G=\mathbb{Z}_3$ has been handled also in \cite[Prop. 6 and 7]{am}.

In this note, we continue the classification of the braided monoidal structures on ${\rm Vect}^G$. 
As we have explained, the cyclic case has been covered completely; in what follows we complete the
classification in the easiest remaining case, that is the case where $G$ is the
Vierergruppe von Klein, the product $C_2\times C_2$ of two cyclic groups of order 2.
Using techniques from homological 
algebras we can describe $H^3(C_2\times C_2, k^*)$, see \prref{A18}. But for the explicit 
monoidal structures we need explicit formulas for the representing cocycles.
In \seref{2},we reduce this problem to the computation of the so-called happy $3$-cocycles and then, 
after some computations, we were able to find out the explicit form of 
these elements, see \thref{A10}. It came out that there are three types of 
normalized happy $3$-cocycles which were denoted by $\phi_X$, $h_a$ 
and $g_b$, respectively ($X\subseteq (C_2\times C_2)\backslash\{e\}$, and $a, b\in k^*$). 

The abelian cocycles are computed in \seref{braided}. 
Using the Eilenberg-Mac Lane Theorem we can compute $H^3_{\rm ab}(C_2\times C_2, k^*)$, 
but, again, we want to compute the explicit formulas for the R-matrices.
When $k$ does 
not contain a primitive fourth root of unit there are $8$ non-isomorphic braidings on 
${\rm Vect}^{C_2\times C_2}$, all of them having the trivial cocycle as a underlying 
$3$-cocycle, cf. \prref{ab3cocycfromtriv}. If $k$ has a primitive fourth root of unit $i$,
then we have $24$ additional non-isomorphic braidings, with underlying cocycles
$\phi_X$, with $|X|=2$, see \prref{ab3cocycfromnontriv}. In this case
$H^3_{\rm ab}(C_2\times C_2, k^*)\cong C_4\times C_4\times C_2$. In both cases,
there are only four non-isomorphic symmetric monoidal structures.

The importance of having the explicit formula for the cocycles and R-matrices is
illustrated in the final \seref{appl}, where we construct explicit examples of quasi-Hopf 
algebras and weak braided Hopf algebras. Quasi-Hopf algebras 
are obtained by determining explicitely some Harrison $3$-cocycles corresponding to 
some $3$-cocycles on $C_n$ and $C_2\times C_2$, while the weak braided Hopf algebras 
are build on $k[C_n]$ or $k[C_2\times C_2]$ with the help of a coboundary 
abelian $3$-cocycle on $C_n$, respectively on $C_2\times C_2$. Note that the latter
are commutative and cocommutative weak braided Hopf algebras in 
${\rm Vect}^{C_n}$ or ${\rm Vect}^{C_2\times C_2}$, which are symmetric 
monoidal categories with the braided monoidal structures defined by a so-called 
$2$-cochain on $C_n$ (resp. on $C_2\times C_2$).

\section{Preliminary results}\selabel{prelim}
\setcounter{equation}{0}
\subsection{Braided monoidal categories}\selabel{1.1}
A monoidal category is a category $\Cc$ together with 
a functor $\ot:\ \Cc\times\Cc\to \Cc$, called the tensor product, an
object $\un{1}\in \Cc$ called the unit object, and natural isomorphisms 
$a:\ \ot\circ (\ot\times {\rm Id})\to \ot\circ ({\rm Id}\times \ot)$ (the associativity
constraint), $l:\ \ot\circ (\un{1}\times {\rm Id})\to \Id$ (the left unit constraint) and 
$r:\ \ot\circ ({\rm Id}\times \un{1})\to \Id$ (the right unit constraint). In addition, 
$a$ has to satisfy the pentagon axiom, and $l$ and $r$ have to satisfy 
the triangle axiom. We refer to \cite{k, maj} for a detailed discussion. In the sequel,
for any object $X\in \Cc$ we will identify 
$\un{1}\ot X\cong X\cong X\ot\un{1}$ using $l_X$ and $r_X$. 

The switch functor $\tau:\ {\cal C}\times{\cal C}\ra \Cc\times \Cc$ 
is defined by $\tau(X, Y)=(Y, X)$. 
A braiding on a monoidal category $\Cc$ 
is a natural isomorphism $c:\ \ot\to\ot\circ\tau$, satisfying 
certain axioms, see for example \cite{k, maj}. If 
$c_{X, Y}=c^{-1}_{Y, X}$, for all $X, Y\in \Cc$, then we call $\Cc$ a 
symmetric monoidal category.

We are mainly interested in the case where $\Cc= {\rm Vect}^G$, the category of vector spaces
graded by a group $G$. We will write $G$ multiplicatively, and denote $e$ for the unit element
of $G$. Recall that a $G$-graded vector space is a vector space $V$ together with a direct sum
decomposition $V=\bigoplus\limits_{g\in G}V_g$. An element $v\in V_g$ is called
homogeneous of degree $g$, and we write $|v|=g\in G$. For two $G$-graded vector spaces
$V$ and $W$, a $k$-linear map $f:\ V\to W$ is said to preserve the grading if
$f(V_g)\subseteq W_g$, for all $g\in G$. ${\rm Vect}^G$ is the category of $G$-graded 
vector spaces and grade preserving $k$-linear maps.

\subsection{Monoidal structures on ${\rm Vect}^G$}\selabel{1.2}
For two $G$-graded vector spaces $V$ and $W$, $V\ot W$ is again a $G$-graded vector
space, with $(V\ot W)_g:=\bigoplus\limits_{\sigma\tau=g}^{}V_\sigma\ot W_\tau$.
If the $k$-linear maps $f:V\ra V'$ and $g: W\ra W'$ 
are grade preserving, then $f\ot g$ is also grade preserving, hence we have a functor
$\ot:\ {\rm Vect}^G\times {\rm Vect}^G\to {\rm Vect}^G$. $k$ is a $G$-graded vector space,
with $k_e=k$ and $k_g=0$, for all $G\ni g\not=e$. The problem is now to describe
monoidal structures on ${\rm Vect}^G$, with $\ot$ as tensor product and $k$ as the
unit object. It is known that these correspond to $3$-cocycles $\phi$ on $G$ with coefficients 
in $k^*$. To solve this problem, we need to give the possible associativity and unit constraints.

To this end, let us first recall the definition of group cohomology.
Let $K^n(G, k^*)$ be the set of maps from $G^n$ 
to $k^*$. $K^n(G, k^*)$ is a group under pointwise multiplication. We have maps
$\Delta_n:\ K^n(G, k^*)\to K^{n+1}(G, k^*)$. $\Delta_2$ and $\Delta_3$ are given by
the formulas
\begin{eqnarray*}
&&\Delta_2(g)(x, y, z)=g(y, z)g(xy, z)^{-1}g(x, yz)g(x, y)^{-1};\\
&&\Delta_3(f)(x, y, z, t)=
f(y, z, t)f(xy, z, t)^{-1}f(x, yz, t)f(x, y, zt)^{-1} f(x, y, z).
\end{eqnarray*}
It is well-known that $B^n(G,k^*)=\im\Delta_{n-1}\subseteq Z^n(G,k^*)=\Ker(\Delta_n)$.
The $n$-th cohomology group is defined as $H^n(G,k^*)=Z^n(G,k^*)/B^n(G,k^*)$, 
and two elements of $H^n(G, k^*)$ are called cohomologous if they lie in 
the same equivalence class. 
The elements of $Z^3(G,k^*)$ are called 3-cocycles, and the elements of
$B^3(G,k^*)$ are called 3-coboundaries. 
$\phi\in K^3(G,k^*)$ is a 3-cocycle if and only if
\begin{equation}\eqlabel{A1.1}
\phi(y, z, t)\phi(x, yz, t)\phi(x, y, z)=\phi(xy, z, t)\phi(x, y, zt),
\end{equation}
for all $x, y, z\in G$. A 3-cocycle $\phi$ is called {\sl normalized} if
$\phi(x,1,z)=1$, for all $x,z\in G$.

\begin{lemma}\lelabel{A1}
If $\phi$ is a normalized 3-cocycle, then $\phi(e, y, z)=\phi(x, y, e)=1$, for all $x, y, z\in G$.
A coboundary $\Delta_2(\psi)$ is normalized if and only if $\psi(e, x)=\psi(z, e)$, 
for all $x, z\in G$. Then $\psi$ is called a normalized 2-cochain on $G$.
\end{lemma}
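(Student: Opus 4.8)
The plan is to prove the three assertions of \leref{A1} directly from the cocycle condition \equref{A1.1} and the coboundary formula for $\Delta_2$, using the normalization hypothesis at the appropriate spots.

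First I would establish $\phi(e,y,z)=1$. Substitute $x=y=e$ in \equref{A1.1}: the left-hand side becomes $\phi(e,z,t)\phi(e,z,t)\phi(e,e,z)$ while the right-hand side is $\phi(e,z,t)\phi(e,e,zt)$. Here $\phi(e,e,z)=\phi(e,1,z)=1$ and $\phi(e,e,zt)=1$ by normalization (writing the unit as $1$ in the middle slot). This collapses to $\phi(e,z,t)^2=\phi(e,z,t)$, hence $\phi(e,z,t)=1$ for all $z,t\in G$, which (renaming variables) is $\phi(e,y,z)=1$. Symmetrically, for $\phi(x,y,e)=1$, substitute $z=t=e$ in \equref{A1.1}: the left side is $\phi(y,e,e)\phi(x,y,e)\phi(x,y,e)$ and the right side is $\phi(xy,e,e)\phi(x,y,e)$; again $\phi(y,e,e)=\phi(xy,e,e)=1$ by normalization, so $\phi(x,y,e)^2=\phi(x,y,e)$ and therefore $\phi(x,y,e)=1$. (One should check that $G$ has characteristic-free cancellation here — it does, since we are in the multiplicative group $k^*$, so $a^2=a$ forces $a=1$.)

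For the statement about coboundaries, I would simply read off the normalization condition from the explicit formula for $\Delta_2$. A coboundary $\Delta_2(\psi)$ is normalized iff $\Delta_2(\psi)(x,e,z)=1$ for all $x,z$. Plugging $y=e$ into $\Delta_2(\psi)(x,y,z)=\psi(y,z)\psi(xy,z)^{-1}\psi(x,yz)\psi(x,y)^{-1}$ gives $\psi(e,z)\psi(x,z)^{-1}\psi(x,z)\psi(x,e)^{-1}=\psi(e,z)\psi(x,e)^{-1}$. So $\Delta_2(\psi)$ is normalized iff $\psi(e,z)=\psi(x,e)$ for all $x,z\in G$, which is the claimed condition; one then names such a $\psi$ a normalized $2$-cochain.

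There is essentially no obstacle here: each part is a one-line substitution into \equref{A1.1} or into the $\Delta_2$ formula, followed by cancellation in $k^*$. The only point requiring minor care is bookkeeping — making sure that each time we invoke "$\phi(\cdot,1,\cdot)=1$" the middle argument really is the group unit $e$, and that after substitution enough terms cancel to isolate the desired value. The argument is entirely self-contained and uses nothing beyond the definitions already given.
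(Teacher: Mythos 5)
Your proof is correct and follows essentially the same route as the paper: direct substitution into \equref{A1.1} using the normalization hypothesis, plus evaluating $\Delta_2(\psi)(x,e,z)$ for the coboundary statement. The only (immaterial) difference is that you set two variables equal to $e$ and cancel a square, whereas the paper sets a single variable ($z=e$, resp.\ $y=e$) so that the unwanted terms cancel directly.
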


\begin{proof}
Taking $z=e$ in \equref{A1.1}, we find that $\phi(x, y, e)=1$. Taking $y=e$, we find that
$\phi(e, z, t)=1$. The proof of the second statement is straightforward.
\end{proof}

Monoidal structures on ${\rm Vect}^G$ are in bijective correspondence to 
the elements of $Z^3(G,k^*)$. Given a $3$-cocycle $\phi$, the corresponding associativity
constraint is given by the formula
$$a_{V, W, Z}((v\ot w)\ot z)=\phi(| v|,| w|, |z|)v\ot (w\ot z),$$
for $G$-graded vector spaces $V,W$ and $Z$, and
$v\in V$, $w\in W$ and $z\in Z$ homogeneous. The unit constraints are given by
the formulas
$$r_V(v\ot 1)=\phi(|v|,e,e)v~~;~~l_V(1\ot v)=\phi(e,e,|v|)^{-1}v.$$
If $\phi$ is normalized, then the unit constraints are trivial. Two monoidal
structures on ${\rm Vect}^G$ give isomorphic monoidal categories if and only if
their corresponding $3$-cocycles are cohomologous. In order to solve our problem,
it therefore suffices to compute the $3$-cocycles that represent the elements of
$H^3(G,k^*)$. Actually, we can restrict attention to normalized cocycles.

\begin{lemma}\lelabel{A3}
Every 3-cocycle $\phi$ is cohomologous to a normalized 3-cocycle.
\end{lemma}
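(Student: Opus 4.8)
The plan is to produce an explicit $2$-cochain $\psi$ such that $\phi\cdot\Delta_2(\psi)$ is normalized; since $\Delta_2(\psi)\in B^3(G,k^*)$, the resulting $3$-cocycle is cohomologous to $\phi$, which is what is claimed. First I would extract from the cocycle identity \equref{A1.1} the behaviour of $\phi$ on arguments equal to $e$. Putting $y=e$ in \equref{A1.1} and cancelling the common factor $\phi(x,z,t)$ gives $\phi(e,z,t)\,\phi(x,e,z)=\phi(x,e,zt)$; specializing this to $z=e$ yields $\phi(x,e,t)=\phi(x,e,e)\,\phi(e,e,t)$, and a further specialization $x=e$ forces $\phi(e,e,e)=1$. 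Thus every $3$-cocycle automatically satisfies $\phi(e,e,e)=1$ and factors on the middle slot as $\phi(x,e,z)=\phi(x,e,e)\,\phi(e,e,z)$ for all $x,z\in G$.

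This factorization dictates the choice of $\psi$. Define $\psi\in K^2(G,k^*)$ by $\psi(x,y)=\phi(x,e,e)\,\phi(e,e,y)^{-1}$. A direct computation shows that $\Delta_2(\psi)$ collapses on triples whose middle argument is $e$: indeed $\Delta_2(\psi)(x,e,z)=\psi(e,z)\,\psi(x,e)^{-1}$, and using $\phi(e,e,e)=1$ this equals $\phi(e,e,z)^{-1}\phi(x,e,e)^{-1}$. Multiplying by $\phi(x,e,z)=\phi(x,e,e)\,\phi(e,e,z)$ from the previous step, we get $(\phi\cdot\Delta_2(\psi))(x,e,z)=1$ for all $x,z\in G$, i.e.\ $\phi':=\phi\cdot\Delta_2(\psi)$ is a normalized $3$-cocycle.

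To finish I would observe that $\phi'$ is indeed a $3$-cocycle, being the product of the cocycle $\phi$ with the coboundary $\Delta_2(\psi)\in B^3(G,k^*)\subseteq Z^3(G,k^*)$, and that $\phi'$ is cohomologous to $\phi$ by construction. There is no serious obstacle here: the only delicate point is the choice of $\psi$ — one must put the factor $\phi(x,e,e)$ into the first slot and $\phi(e,e,y)^{-1}$ into the second, so that exactly the right boundary terms cancel. Less careful guesses, such as $\psi(x,y)=\phi(x,e,y)^{\pm1}$, leave a surviving square like $\phi(e,e,z)^{2}$ and fail to normalize $\phi$. Everything else amounts to the routine specializations of \equref{A1.1} and the elementary expansion of $\Delta_2(\psi)$ indicated above.
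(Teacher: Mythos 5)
Your proposal is correct and follows essentially the same route as the paper: the same specializations of \equref{A1.1} give $\phi(x,e,z)=\phi(x,e,e)\phi(e,e,z)$ and $\phi(e,e,e)=1$, and your $2$-cochain $\psi(x,y)=\phi(x,e,e)\phi(e,e,y)^{-1}$ is exactly the paper's $g(x,y)=\phi(1,1,y)^{-1}\phi(x,1,1)$, with the identical verification that $\phi\Delta_2(\psi)$ is normalized.
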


\begin{proof}
Take $y=z=1$ in \equref{A1.1}. Then we find 
$\phi(x, 1, t)=\phi(1, 1, t)\phi(x, 1, 1)$. In particular, it follows that 
$\phi(1, 1, 1)=1$ (take $x=t=1$). 
Then consider the map $g:\ G\times G\to k^*$, 
$g(x, y)=\phi(1, 1, y)^{-1}\phi(x, 1, 1)$, and compute that
\begin{eqnarray*}
\Delta_2(g)(x, 1, y)&=&g(1, y)g(x, y)^{-1}g(x, y)g(x, 1)^{-1}\\
&=&\phi(1, 1, y)^{-1}\phi(1, 1, 1)\phi(1, 1, 1)\phi(x, 1, 1)^{-1}=\phi(x, 1, y)^{-1}.
\end{eqnarray*}
It then follows that $\phi\Delta_2(g)$ is normalized.
\end{proof}

Let $B^3_n(G, k^*)$ and $Z^3_n(G, k^*)$ be the subgroups of $B^3(G, k^*)$ 
and $Z^3(G, k^*)$ consisting of normalized elements. We then have a well-defined 
group morphism 
\[
Z^3_n(G, k^*)/B^3_n(G, k^*)\ni \hat{\phi}\mapsto \ov{\phi}\in Z^3(G, k^*)/B^3(G, k^*)
\] 
which is surjective by \leref{A3}. It is easy to see that it is also injective, 
and therefore   
$$
H^3(G, k^*)= Z^3_n(G, k^*)/B^3_n(G, k^*).
$$

\subsection{Braided monoidal structures on ${\rm Vect}^G$}\selabel{1.3}
The next problem is to describe all {\sl braided} monoidal structures on ${\rm Vect}^G$,
with $\ot$ as tensor product and $k$ as unit object. Such a braiding can only exist in the
case where $G$ is abelian. In this case, these structures are in
bijective correspondence with so-called {\sl abelian} $3$-cocycles in $G$,
see \cite{js, jsp}. An abelian $3$-cocycle is a pair $(\phi, \Rr)$, where $\phi$ is a
normalized $3$-cocycle, and $\Rr:\ G\times G\to k^*$ is a map satisfying
\begin{eqnarray}
&&{\cal R}(xy, z)\phi(x, z, y)=\phi(x, y, z){\cal R}(x, z)\phi(z, x, y)
{\cal R}(y, z);\eqlabel{r1coc}\\
&&\phi(x, y, z){\cal R}(x, yz)\phi(y, z, x)={\cal R}(x, y)
\phi(y, x, z){\cal R}(x, z)\eqlabel{r2coc},
\end{eqnarray}
for all $x, y, z\in G$. If we take $x=y=e$ in \equref{r1coc} and
$y=z=e$ in \equref{r1coc}, then we obtain immediately that
\begin{equation}\eqlabel{r0coc}
\Rr(e,z)=\Rr(x,e)=1.
\end{equation}
We call $\phi$ the underlying $3$-cocycle, and ${\cal R}$ the R-matrix.
The corresponding monoidal structure is defined by $\phi$,
and the braiding is described by the formula
$$c_{V, W}(v\ot w)={\cal R}(|v|,|w|)w\ot v,$$
for all $V, W\in {\rm Vect}^G$ and $v\in V$ and $w\in W$ homogeneous. It is easy
to show that the formulas (\ref{eq:r1coc}-\ref{eq:r2coc}) express the commutativity of
the hexagonal diagrams in the definition of a braiding.

Take $\psi:\ G\times G\to k^*$ satisfying $\psi(e, x)=\psi(y, e)$, for all $x,y\in G$, so
that $\Delta_2(\psi)$ is normalized. Consider the map
${\cal R}_\psi: G\times G\ra k^*$, ${\cal R}_\psi(x, y)=\psi(x, y)^{-1}\psi(y, x)$.
Then $(\Delta_2(\psi),\Rr_{\psi})$ is an abelian 3-cocycle, called an abelian
$3$-coboundary. The sets $Z^3_{\rm ab}(G,k^*)$ of abelian 3-cocycles
and $B^3_{\rm ab}(G,k^*)$ of abelian 3-coboundaries are abelian groups under
pointwise multiplication, and $H^3_{\rm ab}(G,k^*)$ is defined as the quotient
$Z^3_{\rm ab}(G,k^*)/B^3_{\rm ab}(G,k^*)$. Two braided monoidal structures on
${\rm Vect}^G$ are isomorphic if and only if their corresponding abelian $3$-cocycles
are cohomologous. Finally, observe that $(\phi, {\cal R})\in Z^3_{\rm ab}(G, k^*)$ defines 
a {\sl symmetric} monoidal 
structure on ${\rm Vect}^G$ if and only if ${\cal R}(x, y){\cal R}(y, x)=1$, 
for all $x, y\in G$.

\subsection{Some homological algebra}\selabel{1.4}
We can compute certain cohomology groups using techniques from homological algebra. 
For $r,s\in \NN_0$, let $(r,s)$ be the 
greatest common divisor of $r$ and $s$. Denote by $\mu_r(k)$ the group of $r$-th units in $k$, 
and by $k^{*(r)}:=\{\a^r\mid \a\in k\}$. We also denote by 
$C_s$ the cyclic group of order $s$ written multiplicatively and by 
$\mathbb{Z}_r$ the cyclic group of order $r$ written, this time, additively.  

\begin{proposition}\prlabel{A18}
Let $k$ be a field, and $r,s\in \NN_0$. Then
\begin{equation}\eqlabel{A18.0}
H^3(C_r, k^*)=\mu_r(k)~~;~~H^3(C_r\times C_s)=
k^*/k^{*(r,s)}\times \mu_r(k)\times \mu_s(k)\times \mu_{(r,s)}(k).
\end{equation}
\end{proposition}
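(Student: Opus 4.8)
The plan is to compute $H^3(G,k^*)$ for $G=C_r$ and $G=C_r\times C_s$ by resolving the trivial $\ZZ G$-module $\ZZ$ by free modules and applying $\Hom_{\ZZ G}(-,k^*)$. For $G=C_r=\langle t\mid t^r\rangle$ there is the standard periodic free resolution
\[
\cdots \stackrel{N}{\ra}\ZZ G\stackrel{t-1}{\ra}\ZZ G\stackrel{N}{\ra}\ZZ G\stackrel{t-1}{\ra}\ZZ G\ra \ZZ\ra 0,
\]
where $N=1+t+\cdots+t^{r-1}$. Since $k^*$ is a trivial module, applying $\Hom_{\ZZ G}(-,k^*)$ turns $t-1$ into the zero map and $N$ into the $r$-th power map $\a\mapsto \a^r$ on $k^*$. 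Hence $H^{2i}(C_r,k^*)=k^*/k^{*(r)}$ for $i\geq 1$ and $H^{2i+1}(C_r,k^*)=\Ker(\a\mapsto\a^r)=\mu_r(k)$ for $i\geq 0$; in particular $H^3(C_r,k^*)=\mu_r(k)$, which is the first formula.

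For the product $G=C_r\times C_s$ I would obtain a free resolution of $\ZZ$ over $\ZZ G=\ZZ C_r\ot_\ZZ \ZZ C_s$ by tensoring the two periodic resolutions above, and then compute cohomology via the Künneth theorem for cochain complexes. Concretely, $H^n(C_r\times C_s,k^*)$ fits into a short exact sequence
\[
0\ra \bigoplus_{p+q=n} H^p(C_r,\ZZ)\ot H^q(C_s,k^*)\ra H^n(C_r\times C_s,k^*)\ra \bigoplus_{p+q=n+1}\Tor^\ZZ_1\bigl(H^p(C_r,\ZZ),H^q(C_s,k^*)\bigr)\ra 0,
\]
using that $H^p(C_r,\ZZ)$ is $\ZZ$ for $p=0$, $\ZZ_r$ for $p$ even $>0$, and $0$ for $p$ odd, while $H^q(C_s,k^*)$ is $k^*$ for $q=0$, $\mu_s(k)$ for $q$ odd, and $k^*/k^{*(s)}$ for $q$ even $>0$. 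For $n=3$ the left-hand sum has contributions from $(p,q)=(0,3)$ giving $\mu_s(k)$, and from $(p,q)=(2,1)$ giving $\ZZ_r\ot_\ZZ \mu_s(k)$; the $\Tor$ term with $p+q=4$ has contributions from $(p,q)=(2,2)$ giving $\Tor_1(\ZZ_r,k^*/k^{*(s)})$ and from $(p,q)=(4,0)$ giving $\Tor_1(\ZZ_r,k^*)$.

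The remaining work is to identify these abelian groups. One computes $\ZZ_r\ot_\ZZ \mu_s(k)\cong \mu_r(k)\cap\mu_s(k)=\mu_{(r,s)}(k)$ (kill $r$-th powers inside the $s$-torsion of $k^*$), $\Tor_1^\ZZ(\ZZ_r,k^*)\cong \mu_r(k)$ (the $r$-torsion of $k^*$), and $\Tor_1^\ZZ(\ZZ_r,k^*/k^{*(s)})\cong \{\a\in k^*\mid \a^r\in k^{*(s)}\}/k^{*(s)}$. The last group is canonically isomorphic to $k^*/k^{*(r,s)}$: writing $d=(r,s)$, the $r$-th power map induces an isomorphism between the displayed quotient and $k^{*(r)}k^{*(s)}/k^{*(s)}\cong k^{*(r)}/(k^{*(r)}\cap k^{*(s)})=k^{*(r)}/k^{*(\mathrm{lcm})}\cong k^*/k^{*(d)}$ after unwinding; I expect this step --- reconciling the various $\Tor$-and-$\ot$ expressions involving $k^*$, $k^{*(r)}$, $k^{*(s)}$, $\mu_r(k)$, $\mu_s(k)$ and checking the Künneth sequence splits --- to be the main obstacle, since one must track several not-quite-obvious isomorphisms of multiplicative groups and verify the extension is trivial (it is, because all these groups are built from $k^*$ by taking powers and torsion, and one can produce an explicit splitting at the cochain level). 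Assembling the four pieces yields
\[
H^3(C_r\times C_s,k^*)\cong k^*/k^{*(r,s)}\times \mu_r(k)\times \mu_s(k)\times \mu_{(r,s)}(k),
\]
which is \equref{A18.0}.
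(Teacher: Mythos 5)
Your computation for the cyclic case via the periodic resolution is correct, and your overall strategy for $C_r\times C_s$ (tensor the two periodic resolutions and apply a K\"unneth argument with $k^*$-coefficients) is a legitimate alternative to the paper's route, which instead computes the integral homology groups $H_2(C_r\times C_s,\ZZ)=\ZZ_{(r,s)}$ and $H_3(C_r\times C_s,\ZZ)=\ZZ_r\times\ZZ_s\times\ZZ_{(r,s)}$ by the homological K\"unneth formula and only then passes to $k^*$ by the Universal Coefficient Theorem. However, as written your argument has two genuine gaps, both located exactly where you predicted the ``main obstacle'' to be. First, your identification $\Tor^\ZZ_1(\ZZ_r,k^*/k^{*(s)})\cong k^*/k^{*(r,s)}$ is justified by saying that the $r$-th power map induces an isomorphism between $\{\a\in k^*\mid\a^r\in k^{*(s)}\}/k^{*(s)}$ and $k^{*(r)}k^{*(s)}/k^{*(s)}$; but the first group is the \emph{kernel} of the $r$-th power endomorphism of $k^*/k^{*(s)}$ and the second is its \emph{image}, and these need not be isomorphic. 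The case $r=s$ makes the failure explicit: the group you must compute is $k^*/k^{*(r)}$, while $k^{*(r)}k^{*(s)}/k^{*(s)}$ is trivial (take $k=\nrat$, $r=s=2$). The conclusion you want --- that the $r$-torsion of $k^*/k^{*(s)}$ is isomorphic to $k^*/k^{*(r,s)}$ --- may well be true, but it requires a genuine argument, none of which is supplied.

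Second, the splitting of your cohomological K\"unneth sequence is asserted, not proved. The standard splitting statement applies when both complexes consist of free abelian groups; here one factor is the complex $k^*\ra k^*\ra k^*\ra\cdots$ with differentials alternating between the trivial map and $(\cdot)^s$, whose terms are not free, so a splitting would have to be exhibited by hand, and ``one can produce an explicit splitting at the cochain level'' is precisely the missing step. Both difficulties are sidestepped by the paper's organization: all K\"unneth bookkeeping is done with $\ZZ$-coefficients, where the homology groups involved are finitely generated and the splitting is standard, and the passage to $k^*$ is made once, through the Universal Coefficient Theorem, where $\Ext^1_\ZZ(\ZZ_{(r,s)},k^*)=k^*/k^{*(r,s)}$ and $\Hom_\ZZ(\ZZ_m,k^*)=\mu_m(k)$ are immediate. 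If you wish to keep your approach, you must either prove the two missing facts directly or restructure the argument along these lines.
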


\begin{proof}
We have the following consequence of the Universal Coefficient Theorem, see
for example \cite[Exercise 6.1.5]{weibel}:
\begin{equation}\eqlabel{A18.1}
H^n(G, k^*)\cong \Ext^1_{\ZZ}(H_{n-1}(G, \ZZ),k^*)\times \Hom_{\ZZ}(H_n(G, \ZZ),k^*),
\end{equation}
where $H_n(G, \ZZ)=\Tor^{\ZZ G}_n(\ZZ, \ZZ)$ is the $n$-th homology group of $G$ with
values in $\ZZ$.

1) We apply \equref{A18.1} in the case where $n=3$ and
$G=C_r\cong \ZZ_r$. From \cite[Example 6.2.3]{weibel},
we recall that
$$
H_0(C_r,\ZZ)=\ZZ,~~H_{2n-1}(C_r, \ZZ)=\ZZ_r,~~H_{2n}(C_r, \ZZ)=0,
$$
for $n\geq 1$. It follows immediately that
$\Ext^1_{\ZZ}(H_{2}(C_r, \ZZ), k^*)=0$, and
$$
\Hom_{\ZZ}(H_3(C_r, \ZZ),k^*)=\Hom_{\ZZ}(C_r, k^*)=\mu_r(k), 
$$
proving the first formula in \equref{A18.0}.

2) It follows from the K\"unneth formula, see for example 
\cite[Prop. 6.1.13]{weibel}, that
\begin{eqnarray*}
&&\hspace*{-3cm}
H_n(G\times H, \ZZ)
\cong
\prod_{p+q=n} H_p(G, \ZZ)\otimes_{\ZZ} H_q(H, \ZZ) \\
&\times&
\prod_{p+q=n-1} \Tor^{\ZZ}_1(H_p(G, \ZZ),H_q(H, \ZZ)).
\end{eqnarray*}
Now, for any positive integers $p, q$ we have  
$$
H_p(C_r, \ZZ)\otimes_{\ZZ} H_q(C_s, \ZZ)= 
\begin{cases}
\ZZ_r\ot_{\ZZ}\ZZ_s\cong \ZZ_{(r,s)}&\mbox{\rm for $p, q$ odd}\\
\ZZ\ot_{\ZZ}\ZZ_s\cong \ZZ_s&\mbox{\rm for $p=0$ and $q$ odd}\\
\ZZ_r\ot_{\ZZ}\ZZ\cong \ZZ_r&\mbox{\rm for $p$ odd and $q=0$}\\
0&\mbox{\rm otherwise}
\end{cases}\hspace{2mm},
$$ 
and
$$
\Tor^{\ZZ}_1(H_p(C_r, \ZZ),H_q(C_s, \ZZ))=
\begin{cases}
\Tor^{\ZZ}_1(\ZZ_r, \ZZ_s)=\ZZ_{(r,s)}&\mbox{\rm for $p, q$ odd}\\
\Tor^{\ZZ}_1(\ZZ, \ZZ_s)=0&\mbox{\rm for $p=0$ and $q$ odd}\\
\Tor^{\ZZ}_1(\ZZ_r,\ZZ)=0&\mbox{\rm for $p$ odd and $q=0$}\\
0&\mbox{\rm otherwise}
\end{cases}\hspace{2mm}.
$$
Substituting these formulas in the K\"unneth formula, we find that 
$$
H_2(C_r\times C_s, \ZZ)= \ZZ_{(r,s)}~{\rm and}~
H_3(C_r\times C_s, \ZZ)=\ZZ_r\times \ZZ_s\times \ZZ_{(r,s)}.
$$
It is well-known that 
$$
\Ext^1_{\ZZ}(\ZZ_{(r,s)}, k^*)=k^*/k^{*(r,s)}
$$
(see for example \cite[Theorem 7.17]{rotman}), and that 
$$
\Hom_{\ZZ}(\ZZ_r\times \ZZ_s\times \ZZ_{(r,s)}, k^*)=
\mu_r(k)\times \mu_s(k)\times \mu_{(r,s)}(k).
$$
The second formula in \equref{A18.0} then follows after we apply \equref{A18.1}.
\end{proof}

\subsection{The Eilenberg-Mac Lane Theorem}\selabel{1.5}
The Eilenberg-Mac Lane Theorem gives a description of $H^3_{\rm ab}(G, k^*)$ 
for an arbitrary abelian group $G$.
A function $Q: G\ra k^*$ between abelian groups $G$, $k^*$ is called 
a quadratic form when $Q(x^{-1})=Q(x)$, and  
\begin{equation}\eqlabel{quadraticform}
Q(xyz)Q(x)Q(y)Q(z)=Q(xy)Q(xz)Q(yz),
\end{equation}  
for all $x, y, z\in G$. The set of quadratic forms on $G$ with values in $k^*$ is 
denoted by $QF(G, k^*)$. It is easy to see that the pointwise product of two 
quadratic forms is again a quadratic form, so $QF(G, k^*)$ 
is an abelian group.

\begin{theorem}\thlabel{EM} {\bf (Eilenberg-Mac Lane \cite{eml1, smcl1})}
Let $G$ be an abelian group and $(\phi, {\cal R})\in H^3_{\rm ab}(G, k^*)$. Then $Q: G\ra k^*$ given by 
$Q(x)={\cal R}(x, x)$, for all $x\in G$, is a quadratic form on $G$ with values in $k^*$. It is called 
the trace of the abelian $3$-cocycle $(\phi, {\cal R})$. Furthermore, trace induces a group isomorphism 
$EM: H^3_{\rm ab}(G, k^*)\ra QF(G, k^*)$. 
\end{theorem}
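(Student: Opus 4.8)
The plan is to establish the statement in four steps: (i) for a fixed abelian $3$-cocycle $(\phi,{\cal R})$, the diagonal map $Q(x)={\cal R}(x,x)$ is a quadratic form on $G$; (ii) the value of $Q$ depends only on the cohomology class of $(\phi,{\cal R})$, so that $EM$ is well defined on $H^3_{\rm ab}(G,k^*)$; (iii) $EM$ is a group homomorphism; (iv) $EM$ is a bijection. Steps (ii) and (iii) are immediate. If $(\phi,{\cal R})=(\Delta_2(\psi),{\cal R}_\psi)$ is an abelian $3$-coboundary, then ${\cal R}_\psi(x,x)=\psi(x,x)^{-1}\psi(x,x)=1$, so abelian $3$-coboundaries have constant trace $1$ and trace factors through $H^3_{\rm ab}(G,k^*)$; and since the group law on $Z^3_{\rm ab}(G,k^*)$ is pointwise multiplication, $({\cal R}{\cal R}')(x,x)={\cal R}(x,x){\cal R}'(x,x)$, which is the homomorphism property of $EM$.

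For step (i) I would work directly with the hexagon relations. Rearranging \equref{r1coc} and \equref{r2coc} gives
\begin{align*}
{\cal R}(xy,z)&={\cal R}(x,z){\cal R}(y,z)\,\frac{\phi(x,y,z)\phi(z,x,y)}{\phi(x,z,y)},\\
{\cal R}(x,yz)&={\cal R}(x,y){\cal R}(x,z)\,\frac{\phi(y,x,z)}{\phi(x,y,z)\phi(y,z,x)},
\end{align*}
so ${\cal R}$ is multiplicative in each slot up to an explicit $\phi$-error. Put $\beta(x,y):={\cal R}(x,y){\cal R}(y,x)$; it is symmetric by construction, and for the bicharacter identity $\beta(xy,z)=\beta(x,z)\beta(y,z)$ one expands ${\cal R}(xy,z)$ by the first formula and ${\cal R}(z,xy)$ by the second, whereupon the two $\phi$-errors cancel \emph{identically}. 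Thus $\beta$ is a symmetric bicharacter. Feeding the two formulas into ${\cal R}(xy,xy)$ and peeling the factors off one at a time, one obtains $Q(xy)=Q(x)Q(y)\beta(x,y)$ up to a $\phi$-error that now equals $1$ by one application of the $3$-cocycle identity \equref{A1.1} at $(x,y,x,y)$. From $Q(xy)=Q(x)Q(y)\beta(x,y)$ with $\beta$ a symmetric bicharacter, the identity \equref{quadraticform} and $Q(x^{-1})=Q(x)$ follow formally, using $Q(e)={\cal R}(e,e)=1$ (see \equref{r0coc}) and $\beta(x,x^{-1})=\beta(x,x)^{-1}$. I note for later use that $Q\equiv 1$ then forces $\beta\equiv 1$, that is, ${\cal R}(y,x)={\cal R}(x,y)^{-1}$.

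The substance of the theorem, and the step I expect to be the real obstacle, is the bijectivity (iv); here I would follow Eilenberg and Mac Lane \cite{eml1,smcl1}. For injectivity, assume $Q\equiv 1$; by the previous paragraph ${\cal R}$ is then antisymmetric, and one must produce a normalised $2$-cochain $\psi$ with $\Delta_2(\psi)=\phi$ and ${\cal R}_\psi={\cal R}$, i.e.\ show that the whole abelian $3$-cocycle trivialises — the obstruction to doing so being precisely the trace, which has been killed. For surjectivity, given $Q\in QF(G,k^*)$ with associated symmetric bicharacter $b(x,y)=Q(xy)Q(x)^{-1}Q(y)^{-1}$, one must exhibit an abelian $3$-cocycle with ${\cal R}(x,x)=Q(x)$ (so that, necessarily, $\beta=b$); since a bicharacter ``square root'' of $b$ with the prescribed diagonal need not exist, the underlying cocycle $\phi$ is genuinely needed. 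In the only case relevant to this paper, $G$ finitely generated, I would decompose $G$ into cyclic factors (with generators $e_i$), realise the diagonal contributions $Q(e_i)$ on each factor by the explicit cyclic $3$-cocycles and $R$-matrices recalled in \seref{intro} (cf. \equref{3coccyc}), realise the off-diagonal pairings $b(e_i,e_j)$ by bilinear cross terms in ${\cal R}$, and check \equref{r1coc}, \equref{r2coc} and \equref{A1.1} directly; the general case then follows by writing $G$ as a filtered colimit of its finitely generated subgroups (or as in \cite{eml1}). By contrast, steps (i)--(iii) — that $Q$ is a quadratic form and that $EM$ is a well-defined homomorphism — amount to a short direct computation with \equref{r1coc}, \equref{r2coc} and \equref{A1.1}.
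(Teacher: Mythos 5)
Your proposal should first be measured against what the paper actually does: the paper gives no proof of this theorem at all, it simply refers to Joyal--Street \cite[p.~35, Theorem 12]{js} (and attributes the result to \cite{eml1, smcl1}). So you are not diverging from the paper's argument --- you are supplying one where the paper supplies a citation. Your steps (i)--(iii) are correct and check out in detail: the two rearranged forms of \equref{r1coc} and \equref{r2coc} are right, the $\phi$-errors do cancel identically in $\beta(xy,z)=\mathcal{R}(xy,z)\mathcal{R}(z,xy)$, and the error in $Q(xy)=\mathcal{R}(xy,xy)$ reduces to $\phi(x,y,xy)\phi(xy,x,y)\phi(x,xy,y)^{-1}\phi(x,y,x)^{-1}\phi(y,x,y)^{-1}$, which is exactly \equref{A1.1} evaluated at $(x,y,x,y)$ (using commutativity of $G$); together with $\mathcal{R}(e,e)=1$ from \equref{r0coc} and $\beta(x,x)=Q(x)^2$ this gives \equref{quadraticform} and $Q(x^{-1})=Q(x)$, and the well-definedness and homomorphism properties are immediate as you say. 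This "$Q(xy)=Q(x)Q(y)\beta(x,y)$ with $\beta$ a symmetric bicharacter" route is precisely the classical argument, so for the first half your write-up is a genuine improvement on the paper, which leaves even this part to the reference.

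The caveat is step (iv), which is the actual content of the isomorphism claim and which you do not prove: your injectivity paragraph essentially restates the claim (saying "the obstruction is precisely the trace" is what has to be shown --- one must exhibit the $2$-cochain $\psi$ with $\Delta_2(\psi)=\phi$ and $\mathcal{R}_\psi=\mathcal{R}$, and no construction is indicated), and your surjectivity plan for finitely generated $G$ (cyclic cocycles as in \equref{3coccyc} on the factors plus bilinear cross terms, then a colimit argument) is plausible but unverified; the passage to arbitrary abelian $G$ by filtered colimits in particular needs care on the $H^3_{\rm ab}$ side. Since the paper itself defers the whole theorem to \cite{js}, deferring only the bijectivity to \cite{eml1, smcl1, js} is an acceptable and even stronger position, but you should be explicit that your text is not a self-contained proof of the isomorphism; as written, only the statement "trace is a quadratic form and $EM$ is a well-defined group homomorphism" is established.
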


For a proof of the Eilenberg-Mac Lane Theorem, we refer to \cite[p. 35, Theorem 12]{js}.

\section{Computation of the $3$-cocycles on the Vierergruppe}\selabel{2}
\setcounter{equation}{0}
The {\sl Vierergruppe von Klein} is the non-cyclic group of order four, $C_2\times C_2$.
It follows from \prref{A18} that
$$H^3(C_2\times C_2, k^*)\cong k^*/k^{*(2)}\times \mu_2(k)\times \mu_2(k).$$
In order to be able to describe the monoidal structures on the category of vector spaces
by the Klein group, we need the cocycles explicitly. This explicit form will also be needed
in \seref{braided}, where we will compute the abelian $3$-cocycles.
In this Section, we will compute the cocycles manually.

We will work over a field $k$ of characteristic different from 2. 
In the sequel, we will write 
$G=C_2\times C_2=\{e,\sigma,\tau,\rho\}$, with $\sigma\tau=\tau\sigma=\rho$ 
and $\sigma^2=\tau^2=e$. 

\subsection{The normalized coboundaries}\selabel{2.1}
Consider $g:\ C_2\times C_2\to k^*$. If $\Delta_2(g)$ is a normalized 
coboundary, then $g$ is determined completely by the following data
(see \leref{A1}):
\begin{equation}\eqlabel{A5.1}
\begin{matrix}
g(\sigma, \sigma)=a_1&,\hspace{2mm}g(\tau, \tau)=a_2&,\hspace{2mm}g(\rho, \rho)=a_3\hspace{1mm},\cr
g(\sigma, \tau)=b_1&,\hspace{2mm}g(\tau, \rho)=b_2 &,\hspace{2mm}g(\rho, \sigma)=b_3\hspace{1mm},\cr
g(\tau, \sigma)=b_4&,\hspace{2mm}g(\sigma, \rho)=b_5&,\hspace{2mm}g(\rho, \tau)=b_6\hspace{1mm}.\cr
g(x,1)=c&,\hspace{2mm}g(1,x)=c,&\cr
\end{matrix}
\end{equation}
for all $x\in G$.
All other values of $g$ are equal to $1$. Normalized coboundaries thus depend on the choice
of ten parameters $a_1,a_2,a_3,b_1,b_2,b_3,b_4,b_5,b_6,c\in k^*$.
For later use, we list some of the values of
$\Delta_2(g)$:
\begin{equation}\eqlabel{A5.2a}
\Delta_2(g)(\sigma, \sigma, \sigma)
=\Delta_2(g)(\tau, \tau, \tau)=\Delta_2(g)(\rho, \rho, \rho)=1.
\end{equation}
\begin{equation}\eqlabel{A5.2}
\begin{matrix}
\Delta_2(g)(\sigma,\sigma,\tau)=b_1b_5a_1^{-1}c^{-1}&,\hspace{2mm}
\Delta_2(g)(\rho,\rho,\sigma)=b_2b_6a_3^{-1}c^{-1}\hspace{1mm},\cr
\Delta_2(g)(\tau,\sigma,\sigma)=b_4^{-1}b_3^{-1}a_1c&,\hspace{2mm} 
\Delta_2(g)(\sigma,\tau,\sigma)=b_1^{-1}b_3^{-1}b_4b_5\hspace{1mm},\cr
\Delta_2(g)(\sigma,\tau,\rho)=b_1^{-1}b_2a_1a_3^{-1}&,\hspace{2mm}
\Delta_2(g)(\tau,\rho,\sigma)=b_2^{-1}b_3a_2a_1^{-1}\hspace{1mm},\cr
\Delta_2(g)(\rho,\sigma,\tau)=b_3^{-1}b_1a_3a_2^{-1}&,\hspace{2mm}
\Delta_2(g)(\tau,\sigma,\rho)=b_4^{-1}b_5a_2a_3^{-1}\hspace{1mm},\cr
\Delta_2(g)(\sigma,\rho,\tau)=b_5^{-1}b_6a_1a_2^{-1}&,\hspace{2mm}
\Delta_2(g)(\rho,\tau,\sigma)=b_6^{-1}b_4a_3a_1^{-1}\hspace{1mm}.\cr
\end{matrix}
\end{equation} 

\subsection{The cocycle relations}\selabel{2.2}
Taking $x=y=z=t=\sigma$ in \equref{A1.1}, we find that $\phi(\sigma, \sigma, \sigma)^2=1$. Thus
\begin{equation}\eqlabel{A6.0}
\varepsilon_\sigma=\phi(\sigma, \sigma, \sigma)=\pm 1.
\end{equation}
We have similar formulas for $\varepsilon_\tau=\phi(\tau, \tau, \tau)$ and 
$\varepsilon_\rho=\phi(\rho, \rho, \rho)$. Since every coboundary takes the value 1 at 
$(\sigma, \sigma, \sigma)$, $(\tau, \tau, \tau)$ and $(\rho, \rho, \rho)$, we see that 
$\varepsilon_\sigma, \varepsilon_\tau$ and $\varepsilon_\rho$  stay invariant if 
we replace $\phi$ by a cohomologous cocycle.

\begin{lemma}\lelabel{A6}
Any normalized 3-cocycle $\phi$ satisfies the relations
\begin{eqnarray}
&&\phi(\rho, \tau, \tau)=\varepsilon_\tau \phi(\sigma, \tau, \tau)\hspace{1mm},\eqlabel{A6.1}\\
&&\phi(\tau, \tau, \rho)=\varepsilon_\tau \phi(\tau, \tau, \sigma)\hspace{1mm},\eqlabel{A6.2}\\
&&\phi(\tau, \rho, \tau)\phi(\tau, \sigma, \tau)=\varepsilon_\tau \hspace{1mm},\eqlabel{A6.3}\\
&&\phi(\sigma, \tau, \tau)\phi(\sigma, \sigma, \tau)\phi(\sigma, \rho, \tau)=1\hspace{1mm},\eqlabel{A6.4}\\
&&\phi(\tau, \sigma, \tau)\phi(\sigma, \tau, \sigma)\phi(\sigma, \rho, \tau)=\phi(\rho, \sigma, \tau)
\phi(\sigma, \tau, \rho)\hspace{1mm},\eqlabel{A6.5}\\
&&\phi(\sigma, \tau, \tau)\phi(\tau, \tau, \sigma)
=\phi(\rho, \tau, \sigma)\phi(\sigma, \tau, \rho)\hspace{1mm},\eqlabel{A6.6}\\
&&\varepsilon_\sigma  \phi(\tau, \rho, \sigma)\phi(\sigma, \tau, \rho)=\phi(\rho, \rho, \sigma)
\phi(\sigma, \tau, \tau)\hspace{1mm},\eqlabel{A6.7}\\
&&\phi(\tau, \rho, \tau)\phi(\sigma, \sigma, \tau)
\phi(\sigma, \tau, \rho)=\phi(\rho, \rho, \tau)\phi(\sigma, \tau, \sigma)\hspace{1mm},
\eqlabel{A6.8}\\
&&\phi(\tau, \rho, \rho)\phi(\sigma, \sigma, \rho)\phi(\sigma, \tau, \rho)
=\varepsilon_\rho \hspace{1mm}.\eqlabel{A6.9}
\end{eqnarray}
These relations remain valid after we apply a permutation of $(\sigma,\tau,\rho)$.
\end{lemma}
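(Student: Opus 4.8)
The plan is to obtain each of the nine identities by a judicious specialization of the $3$-cocycle relation \equref{A1.1}, using normalization (\leref{A1}) and the defining relations $\sigma^2=\tau^2=e$, $\sigma\tau=\tau\sigma=\rho$, $\rho^2=e$ of $G$ to simplify the resulting six-term products. Recall \equref{A1.1}:
\[
\phi(y,z,t)\phi(x,yz,t)\phi(x,y,z)=\phi(xy,z,t)\phi(x,y,zt).
\]
For \equref{A6.1} I would set $(x,y,z,t)=(\sigma,\tau,\tau,\tau)$: the left-hand side becomes $\phi(\tau,\tau,\tau)\phi(\sigma,\rho,\tau)\phi(\sigma,\tau,\tau)$ and the right-hand side $\phi(\rho,\tau,\tau)\phi(\sigma,\tau,\rho)$; combining this with the instance $(x,y,z,t)=(\sigma,\sigma,\tau,\tau)$, which reads $\phi(\sigma,\tau,\tau)\phi(\sigma,\rho,\tau)\phi(\sigma,\sigma,\tau)=\phi(e,\tau,\tau)\phi(\sigma,\sigma,\rho)=\phi(\sigma,\sigma,\rho)$ (using $\phi(e,\tau,\tau)=1$), one eliminates the $\phi(\sigma,\rho,\tau)$ factor and, after also invoking \equref{A6.2} or \equref{A6.9}, isolates \equref{A6.1}. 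In practice it is cleaner to first derive \equref{A6.4} and \equref{A6.9} (which come from single well-chosen substitutions, e.g. $(\sigma,\sigma,\tau,\tau)$ and $(\sigma,\sigma,\rho,\rho)$ together with $\rho\tau=\sigma$, $\rho\sigma=\tau$, $\rho^2=e$), and then feed those back into the other substitutions.

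Concretely, the step-by-step route I would follow is: (i) record the values $\va_\sigma,\va_\tau,\va_\rho=\pm1$ from \equref{A6.0} and the fact that $\phi(x,e,z)=\phi(e,y,z)=\phi(x,y,e)=1$; (ii) plug $(\tau,\tau,\tau,\sigma)$ and $(\sigma,\tau,\tau,\tau)$ into \equref{A1.1} to get \equref{A6.1} and \equref{A6.2} (here $\tau\tau=e$ collapses two of the six terms to $\phi$ of a normalized argument); (iii) plug $(\tau,\tau,\rho,\tau)$ or $(\tau,\rho,\tau,\tau)$ to get \equref{A6.3}, again using $\tau^2=e$; (iv) the substitution $(\sigma,\sigma,\tau,\tau)$, simplified by $\sigma^2=e$ and $\tau^2=e$, gives \equref{A6.4} after combining with \equref{A6.1}/\equref{A6.2} to rewrite $\phi(\rho,\tau,\tau)$ and $\phi(\sigma,\sigma,\rho)$; (v) for the ``mixed'' identities \equref{A6.5}--\equref{A6.8} I would use substitutions in which all three of $\sigma,\tau,\rho$ appear — e.g. $(\sigma,\tau,\sigma,\tau)$, $(\sigma,\tau,\tau,\sigma)$, $(\tau,\rho,\sigma,\tau)$, $(\tau,\rho,\tau,\sigma)$ — and then clean up each six-term product by repeatedly applying the relations already established and the group law; (vi) finally $(\sigma,\sigma,\rho,\rho)$ with $\rho^2=e$, $\sigma\rho=\tau$, plus \equref{A6.1}-type rewriting, yields \equref{A6.9}.

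The last sentence of the statement — invariance of the nine relations under permutations of $(\sigma,\tau,\rho)$ — needs a word of justification rather than a recomputation. The point is that $\{\sigma,\tau,\rho\}$ is exactly the set of non-identity elements of $G=C_2\times C_2$, and any permutation of these three elements extends to an automorphism of $G$ (indeed $\operatorname{Aut}(C_2\times C_2)\cong S_3$ acts as the full symmetric group on the three involutions). If $\theta\in\operatorname{Aut}(G)$ and $\phi$ is a normalized $3$-cocycle, then $\phi\circ(\theta\times\theta\times\theta)$ is again a normalized $3$-cocycle and $\va_{\theta(x)}=\phi(\theta(x),\theta(x),\theta(x))$; since relations \equref{A6.1}--\equref{A6.9} were derived purely from \equref{A1.1} and normalization, they hold verbatim for $\phi\circ(\theta\times\theta\times\theta)$, which is precisely the assertion that they survive a relabelling of $(\sigma,\tau,\rho)$. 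So I would simply invoke this automorphism argument once, at the end.

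The main obstacle I anticipate is bookkeeping in steps (v) and (vi): each raw instance of \equref{A1.1} is a product of six values of $\phi$, and for the genuinely three-colour identities several of these are ``off-diagonal'' values like $\phi(\sigma,\rho,\tau)$ or $\phi(\rho,\sigma,\tau)$ that are not directly controlled by normalization, so one must choose the substitution so that the unwanted terms either cancel in pairs or can be substituted away using the earlier relations. Getting the order of derivation right — so that by the time one tackles \equref{A6.5}--\equref{A6.8} enough of \equref{A6.1}--\equref{A6.4} and \equref{A6.9} is already available — is the only subtlety; there is no conceptual difficulty, just the need to pick the right six or seven specializations and track signs $\va_\sigma,\va_\tau,\va_\rho$ carefully.
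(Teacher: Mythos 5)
Your strategy coincides with the paper's: each of \equref{A6.1}--\equref{A6.9} is obtained from a single specialization of \equref{A1.1} (with \equref{A6.3} also using \equref{A6.1}, or on your route \equref{A6.2}), and several of your quadruples, e.g.\ $(x,y,z,t)=(\sigma,\tau,\tau,\tau)$, $(\tau,\tau,\tau,\sigma)$, $(\sigma,\tau,\sigma,\tau)$, $(\sigma,\tau,\tau,\sigma)$, are exactly the ones the paper uses. Your automorphism argument for the last sentence (any permutation of $\{\sigma,\tau,\rho\}$ extends to an automorphism $\theta$ of $C_2\times C_2$, and $\phi\circ(\theta\times\theta\times\theta)$ is again a normalized $3$-cocycle) is correct and makes explicit a point the paper leaves implicit. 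However, your displayed computations contain a genuine arithmetic error: you repeatedly treat $\tau\tau$ as $\rho$ instead of $e$. With $(\sigma,\tau,\tau,\tau)$ the middle factors are $\phi(\sigma,\tau\tau,\tau)=\phi(\sigma,e,\tau)=1$ and $\phi(\sigma,\tau,\tau\tau)=\phi(\sigma,\tau,e)=1$, not $\phi(\sigma,\rho,\tau)$ and $\phi(\sigma,\tau,\rho)$, so \equref{A6.1} drops out immediately and the elimination you describe (which would invoke \equref{A6.2} or \equref{A6.9} before they are available, a circularity) is unnecessary. Likewise $(\sigma,\sigma,\tau,\tau)$ has right-hand side $\phi(e,\tau,\tau)\phi(\sigma,\sigma,e)=1$, not $\phi(\sigma,\sigma,\rho)$, so it yields \equref{A6.4} on the nose. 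Note also that an instance of \equref{A1.1} involves five values of $\phi$, not six.

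A second concrete issue: your proposed quadruples for the remaining identities do not produce them as stated. The instance $(\tau,\rho,\sigma,\tau)$ gives the cyclically permuted version of \equref{A6.7}; $(\sigma,\sigma,\rho,\rho)$ gives the $\tau\leftrightarrow\rho$ permutation of \equref{A6.4}, so reaching \equref{A6.9} from it requires the permuted \equref{A6.1} as you indicate; and $(\tau,\rho,\tau,\sigma)$ gives a relation that is not simply a permutation of \equref{A6.8} and would need further combination. This is repairable, either by transferring back with your automorphism argument or, more directly, by taking the specializations $(\sigma,\tau,\rho,\sigma)$, $(\sigma,\tau,\rho,\tau)$ and $(\sigma,\tau,\rho,\rho)$, which are the paper's choices and give \equref{A6.7}, \equref{A6.8}, \equref{A6.9} each in one line. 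So the method is sound and essentially the paper's, but as written the argument does not go through until the group arithmetic and the choice of specializations are corrected; once they are, the ``cleaning up'' you anticipate largely evaporates, since every relation follows from one substitution together with normalization.
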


\begin{proof}
All the formulas follow directly from the cocycle relation \equref{A1.1}: we subsequently take
$x=\sigma$, $y=z=t=\tau$ \equref{A6.1}; 
$x=y=z=\tau$, $t=\sigma$ \equref{A6.2}; 
$y=\sigma$, $x=z=t=\tau$ \equref{A6.3} (applying \equref{A6.1}); 
$x=y=\sigma$, $z=t=\tau$ \equref{A6.4};
$x=z=\sigma$, $y=t=\tau$ \equref{A6.5}; 
$x=t=\sigma$, $y=z=\tau$ \equref{A6.6};
$x=t=\sigma$, $y=\tau$, $z=\rho$ \equref{A6.7};
$x=\sigma$, $y=t=\tau$, $z=\rho$ \equref{A6.8};
$x=\sigma$, $y=\tau$, $z=t=\rho$ \equref{A6.9}.
\end{proof}

\begin{lemma}\lelabel{A7}
Let $\phi$ be a normalized cocycle, and write
$$
p=\phi(\sigma,\tau,\rho)\phi(\tau,\rho,\sigma)\phi(\rho,\sigma,\tau)~~;~~
q=\phi(\rho,\tau,\sigma)\phi(\sigma,\rho,\tau)\phi(\tau,\sigma,\rho).
$$
Then
\begin{equation}\eqlabel{A7.1}
p=q=\varepsilon_\sigma\varepsilon_\tau\varepsilon_\rho=\pm 1.
\end{equation}
\end{lemma}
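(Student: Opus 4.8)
The plan is to deduce the two non‑trivial assertions, $p=q$ and $p=\varepsilon_\sigma\varepsilon_\tau\varepsilon_\rho$, by multiplying together suitably chosen triples of cocycle identities; that $\varepsilon_\sigma\varepsilon_\tau\varepsilon_\rho=\pm1$ is immediate from \equref{A6.0}. The basic tool will be the identity obtained by putting $y=x$ in \equref{A1.1}: since $x^2=e$ in $G$ and $\phi(e,z,t)=1$ by \leref{A1}, one gets
\[
\phi(x,z,t)\,\phi(x,xz,t)\,\phi(x,x,z)=\phi(x,x,zt)\qquad\text{for all }x,z,t\in G.
\]
In each product of identities the "fully mixed" factors (those $\phi(a,b,c)$ with $a,b,c$ pairwise distinct) should coalesce into $p$ or $q$, while the values of $\phi$ at triples with a repeated argument should regroup into a product I can identify.

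First I would prove $p=q$. Evaluating the identity above at $(x,z,t)=(\tau,\rho,\sigma),(\sigma,\tau,\rho),(\rho,\sigma,\tau)$ gives three relations with right‑hand sides $\varepsilon_\tau,\varepsilon_\sigma,\varepsilon_\rho$; multiplying them, the mixed factors combine to $p$, so that $pR=\varepsilon_\sigma\varepsilon_\tau\varepsilon_\rho$, where $R$ is the product of the six remaining factors
\[
R=\phi(\tau,\sigma,\sigma)\phi(\sigma,\sigma,\tau)\,\phi(\tau,\tau,\rho)\phi(\rho,\tau,\tau)\,\phi(\sigma,\rho,\rho)\phi(\rho,\rho,\sigma).
\]
On the other hand, multiplying the three images of \equref{A6.9} under the transpositions of $\{\sigma,\tau,\rho\}$, the mixed factors now combine to $q$ while the six remaining factors are exactly those occurring in $R$; hence $qR=\varepsilon_\sigma\varepsilon_\tau\varepsilon_\rho$ as well. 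Since $R\in k^*$, this forces $p=q$. (As an internal check, applying \equref{A6.6} to the three bracketed pairs $\phi(a,b,b)\phi(b,b,a)$ inside $R$ shows $R=pq$, which is consistent with $p=q$ and reassuring during the bookkeeping.)

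Next I would compute the common value. Multiplying \equref{A6.5} by its two images under cyclic permutation of $(\sigma,\tau,\rho)$, the left‑hand sides combine into $S\,q$, where $S$ is the product of the six "sandwich" values $\phi(a,b,a)$ with $a\neq b$, and the right‑hand sides combine into $p^2$; thus $S\,q=p^2$. Grouping the factors of $S$ by their first argument and using \equref{A6.3} and its permutations gives $S=\varepsilon_\sigma\varepsilon_\tau\varepsilon_\rho$. Combined with $p=q$ this reads $\varepsilon_\sigma\varepsilon_\tau\varepsilon_\rho\,p=p^2$, and cancelling $p\in k^*$ yields $p=\varepsilon_\sigma\varepsilon_\tau\varepsilon_\rho$; together with $q=p$ and \equref{A6.0}, this is \equref{A7.1}.

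The hard part is not conceptual but combinatorial: everything hinges on choosing exactly the right specializations and permutations so that the "error terms" thrown off by the two computations in the first step form the same list of six values, and so that in the second step each sandwich value appears once and each mixed value squares up cleanly. There are a dozen or so triples to track, and it is easy to slip on an index or an inverse; the identity $R=pq$ above, together with re‑deriving the same relations from several items in \leref{A6}, are the cross‑checks I would use to keep the computation honest.
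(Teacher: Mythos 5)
Your proof is correct, but it follows a genuinely different route from the paper's. The paper argues directly: starting from $p$, it applies \equref{A6.7} to replace $\phi(\tau,\rho,\sigma)\phi(\sigma,\tau,\rho)$, then (\ref{eq:A6.1},\ref{eq:A6.2}) and the permuted \equref{A6.4}, obtaining $p=\varepsilon_\sigma\varepsilon_\tau\varepsilon_\rho$ in a three-line chain, and then repeats the analogous computation for $q$. You instead split the claim into ``$p=q$'' and ``common value'': you first derive the specialization $\phi(x,z,t)\phi(x,xz,t)\phi(x,x,z)=\phi(x,x,zt)$ (the case $y=x$ of \equref{A1.1}, an instance not explicitly isolated in \leref{A6}), multiply its three cyclic evaluations to get $pR=\varepsilon_\sigma\varepsilon_\tau\varepsilon_\rho$, and multiply the three transposed copies of \equref{A6.9} to get $qR=\varepsilon_\sigma\varepsilon_\tau\varepsilon_\rho$ with the same leftover product $R$ (I checked: the six residual factors do coincide in both computations, and your cross-check $R=pq$ via \equref{A6.6} is also correct), whence $p=q$; then the product of \equref{A6.5} with its two cyclic images gives $Sq=p^2$ with $S=\varepsilon_\sigma\varepsilon_\tau\varepsilon_\rho$ by \equref{A6.3} and its permutations, and cancelling $p$ finishes the proof. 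Both arguments are valid and rest on \leref{A6} (including its permutation-invariance clause); the paper's is shorter and more direct, while yours avoids redoing a parallel computation for $q$ by exploiting the $S_3$-symmetry, at the cost of more combinatorial bookkeeping.
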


\begin{proof}
We compute:
\begin{eqnarray*}
p&=& \phi(\sigma,\tau,\rho)\phi(\tau,\rho,\sigma)\phi(\rho,\sigma,\tau)
\equal{\equref{A6.7}} \varepsilon_\sigma \phi(\rho, \rho, \sigma)
\phi(\sigma, \tau, \tau)\phi(\rho, \sigma, \tau)\\
&\equal{(\ref{eq:A6.1},\ref{eq:A6.2})}&
\varepsilon_\sigma\varepsilon_\rho\varepsilon_\tau 
\phi(\rho, \rho, \tau)\phi(\rho, \tau, \tau)\phi(\rho, \sigma, \tau)\equal{\equref{A6.4}}
\varepsilon_\sigma\varepsilon_\tau \varepsilon_\rho.
\end{eqnarray*}
In a similar way, we prove that $q= \varepsilon_\sigma\varepsilon_\tau \varepsilon_\rho$.
\end{proof}

\subsection{Reduction to happy cocycles}\selabel{2.3}
A normalized 3-cocycle $\phi$ is called 
\index{even cocycle}
even, respectively 
\index{odd cocycle}
odd if $p=1$, respectively $p=-1$. 
$\phi$ is called 
\index{happy cocycle}
happy if $\phi(x,y,z)=p$ whenever $x$, $y$ and $z$ are pairwise distinct and 
different from $1$.

\begin{proposition}\prlabel{A8}
Every 3-cocycle $\phi$ is cohomologous to a happy normalized 3-cocycle.
\end{proposition}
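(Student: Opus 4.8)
The plan is to combine \leref{A3} with an explicit choice of normalizing coboundary that corrects precisely those values of $\phi$ which can obstruct happiness. By \leref{A3} we may assume $\phi$ is already normalized. In $G=C_2\times C_2$ any three pairwise distinct non-identity elements are $\sigma,\tau,\rho$ in some order, so happiness concerns exactly the six triples obtained by permuting $(\sigma,\tau,\rho)$; it therefore suffices to produce a normalized coboundary $\Delta_2(g)$ with $\phi\,\Delta_2(g)$ equal to $p$ at each of these six triples. First note that the target $p$ is unaffected by passing to a cohomologous cocycle: by \leref{A7} one has $p=\varepsilon_\sigma\varepsilon_\tau\varepsilon_\rho$, and the $\varepsilon$'s are cohomology invariants (see the discussion following \equref{A6.0}); in particular $p=\pm1$.

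I would then use the parametrization \equref{A5.1} of normalized coboundaries with $a_1=a_2=a_3=c=1$ and $b_1,\dots,b_6\in k^*$ free. With this choice $\Delta_2(g)$ is automatically normalized (indeed $\Delta_2(g)(x,e,z)=g(e,z)g(x,e)^{-1}=1$), so $\phi\,\Delta_2(g)$ is again a normalized $3$-cocycle, and from \equref{A5.2} the values of $\Delta_2(g)$ at the six triples of interest reduce to $b_1^{-1}b_2,\ b_2^{-1}b_3,\ b_3^{-1}b_1$ at the three cyclic permutations of $(\sigma,\tau,\rho)$ and to $b_4^{-1}b_5,\ b_5^{-1}b_6,\ b_6^{-1}b_4$ at the remaining three permutations (the factors $a_i^{\pm1}$ having become $1$). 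Consequently the six equations $\phi(x,y,z)\,\Delta_2(g)(x,y,z)=p$ decouple into two independent cyclic systems, one in $(b_1,b_2,b_3)$ and one in $(b_4,b_5,b_6)$; the first reads
\[
b_1^{-1}b_2=p\,\phi(\sigma,\tau,\rho)^{-1},\qquad b_2^{-1}b_3=p\,\phi(\tau,\rho,\sigma)^{-1},\qquad b_3^{-1}b_1=p\,\phi(\rho,\sigma,\tau)^{-1},
\]
and the second is the analogous triple with $\phi(\tau,\sigma,\rho),\ \phi(\sigma,\rho,\tau),\ \phi(\rho,\tau,\sigma)$.

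Each of these cyclic systems is solvable precisely because the product of its three prescribed right-hand sides equals $1$: for the first that product is $p^3(\phi(\sigma,\tau,\rho)\phi(\tau,\rho,\sigma)\phi(\rho,\sigma,\tau))^{-1}=p^3p^{-1}=p^2=1$ by the identity $p=\phi(\sigma,\tau,\rho)\phi(\tau,\rho,\sigma)\phi(\rho,\sigma,\tau)$ of \leref{A7}, and the second is settled the same way via $p=q=\phi(\rho,\tau,\sigma)\phi(\sigma,\rho,\tau)\phi(\tau,\sigma,\rho)$. Concretely one fixes $b_1$ (resp. $b_4$) arbitrarily, solves the first two equations for the other two parameters, and the third equation then holds automatically by the compatibility just checked. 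The resulting $g$ yields a normalized coboundary $\Delta_2(g)$ for which $\phi\,\Delta_2(g)$ is the desired happy normalized $3$-cocycle.

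The computations are routine once the values of $\Delta_2(g)$ in \equref{A5.2} are at hand; the one substantive point I would be careful about is that the consistency condition making the two cyclic systems solvable is exactly the content of \leref{A7} --- equivalently, that $p^2=1$ --- and that the two systems genuinely decouple, one group of three triples involving only $b_1,b_2,b_3$ and the other only $b_4,b_5,b_6$.
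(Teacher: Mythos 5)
Your proposal is correct and takes essentially the same route as the paper: reduce to the normalized case by \leref{A3}, use the coboundary parametrization \equref{A5.1} with $a_1=a_2=a_3=c=1$, and adjust the six values at the permutations of $(\sigma,\tau,\rho)$ via \equref{A5.2} and \leref{A7}. The only difference is cosmetic: the paper writes down one explicit solution of your two cyclic systems ($b_1=b_5=p$, $b_2=\phi(\sigma,\tau,\rho)^{-1}$, $b_3=\phi(\rho,\sigma,\tau)$, $b_4=\phi(\tau,\sigma,\rho)$, $b_6=\phi(\sigma,\rho,\tau)^{-1}$), while you establish solvability abstractly from $p=q$ and $p^2=1$.
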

\begin{proof}
It follows from \leref{A3} that we can assume that $\phi$ is normalized.
Let $g$ be defined as in \equref{A5.1}, with $a_1=a_2=a_3=1$,
$b_1=b_5=p$, $b_2=\phi(\sigma, \tau, \rho)^{-1}$, $b_3=\phi(\rho, \sigma, \tau)$, 
$b_4=\phi(\tau, \sigma, \rho)$, $b_6=\phi(\sigma, \rho, \tau)^{-1}$ and $c=1$. 
Applying \equref{A5.2}, we find immediately that $\phi\Delta_2(g)$ is happy.
\end{proof}
\subsection{Description of the happy cocycles}\selabel{2.4}
Assume that $\phi:\ G\times G\times G\to k^*$ is happy and normalized. This means that it 
satisfies the following properties:
\begin{enumerate}
\item $\phi(x, y, z)=1$, if one of the three entries is $1$;
\item $\varepsilon_x=\phi(x, x, x)=\pm 1$, for all $x\in \{\sigma, \tau, \rho\}$;
\item $\phi(x, y, z)=p=\varepsilon_\sigma\varepsilon_\tau \varepsilon_\rho$, if
$(x, y, z)$ is a permutation of $(\sigma, \tau, \rho)$.
\end{enumerate}
The cocycle relations (\ref{eq:A6.4}-\ref{eq:A6.9}) then simplify as follows:
\begin{eqnarray}
&&\phi(\sigma, \tau, \tau)\phi(\sigma, \sigma, \tau)=p\hspace{1mm},\eqlabel{A9.4}\\
&&\phi(\tau, \sigma, \tau)\phi(\sigma, \tau, \sigma)=p\hspace{1mm},\eqlabel{A9.5}\\
&&\phi(\sigma, \tau, \tau)\phi(\tau, \tau, \sigma)=1\hspace{1mm},\eqlabel{A9.6}\\
&&\varepsilon_\sigma =\phi(\rho, \rho, \sigma)
\phi(\sigma, \tau, \tau)\hspace{1mm},\eqlabel{A9.7}\\
&&p\phi(\tau, \rho, \tau)\phi(\sigma, \sigma, \tau) =\phi(\rho, \rho, \tau)
\phi(\sigma, \tau, \sigma)\hspace{1mm},\eqlabel{A9.8}\\
&&\phi(\tau, \rho, \rho)\phi(\sigma, \sigma, \rho)=p\varepsilon_\rho\hspace{1mm}.\eqlabel{A9.9}
\end{eqnarray}

\begin{proposition}\prlabel{A9}
Let $\phi:\ G\times G\times G\to k^*$ be happy and normalized. Then $\phi$ is a 3-cocycle
if and only if  (\ref{eq:A6.1}-\ref{eq:A6.3}) and (\ref{eq:A9.4}-\ref{eq:A9.6}) (and their
permuted versions) are satisfied. In other words, the cocycle relations 
(\ref{eq:A9.7}-\ref{eq:A9.9}) follow from the other cocycle relations.
\end{proposition}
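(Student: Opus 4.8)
I would split the equivalence into its two implications; the only genuine computation is a short relation-chase at the end. For ``only if'', if $\phi$ is a $3$-cocycle then \equref{A6.1}--\equref{A6.3} are \leref{A6}, while \equref{A9.4}--\equref{A9.6} are exactly what \equref{A6.4}--\equref{A6.6} become once the happiness conditions are substituted, as already noted at the start of \seref{2.4}; the same applies to the permuted versions. So all the work is in the converse.

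\textbf{Reducing the converse to \equref{A9.7}--\equref{A9.9}.} Let $\phi$ be happy and normalized and satisfy \equref{A6.1}--\equref{A6.3} and \equref{A9.4}--\equref{A9.6} with all $(\sigma,\tau,\rho)$-permutations. To see that $\phi$ is a $3$-cocycle one checks \equref{A1.1} for all $x,y,z,t$; by \leref{A1} this is automatic as soon as one argument equals $e$, so only the quadruples with all entries in $\{\sigma,\tau,\rho\}$ remain --- a finite check, which the $(\sigma,\tau,\rho)$-symmetry further shortens. For each such quadruple, normalization (which trivializes any factor having $e$ among its arguments) and happiness (which turns each value at a permutation of $(\sigma,\tau,\rho)$ into $p$, where $p^{2}=1$ since $p=\varepsilon_\sigma\varepsilon_\tau\varepsilon_\rho$ and each $\varepsilon_x=\pm1$) collapse \equref{A1.1} either to a triviality of the form $\varepsilon_x^{2}=1$ or $p^{2}=1$, or to one of \equref{A6.1}--\equref{A6.3}, \equref{A9.4}--\equref{A9.9} or a permutation thereof. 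Thus a happy normalized $\phi$ is a $3$-cocycle if and only if all of those relations hold, the forward direction being \leref{A6} and the converse this finite case inspection. Since \equref{A6.1}--\equref{A6.3} and \equref{A9.4}--\equref{A9.6} are hypotheses, it remains only to deduce \equref{A9.7}, \equref{A9.8} and \equref{A9.9}, with their permutations, from them --- which is the ``in other words'' clause of the proposition.

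\textbf{The deduction, and where the difficulty lies.} This last step is a direct manipulation, using freely that $p^{2}=\varepsilon_x^{2}=1$ and $p=\varepsilon_\sigma\varepsilon_\tau\varepsilon_\rho$. For instance, for \equref{A9.9}: from the $(\tau,\rho)$- and $(\sigma,\rho)$-instances of \equref{A9.4} and the permuted \equref{A6.1} in the form $\phi(\sigma,\rho,\rho)=\varepsilon_\rho\,\phi(\tau,\rho,\rho)$ one gets
\[
\phi(\tau,\tau,\rho)\,\phi(\sigma,\rho,\rho)=\varepsilon_\rho\,\phi(\tau,\tau,\rho)\,\phi(\tau,\rho,\rho)=\varepsilon_\rho\,p,
\]
whence, since $\phi(\tau,\rho,\rho)\,\phi(\tau,\tau,\rho)=\phi(\sigma,\rho,\rho)\,\phi(\sigma,\sigma,\rho)=p$ and $p^{2}=1$,
\[
\phi(\tau,\rho,\rho)\,\phi(\sigma,\sigma,\rho)=\bigl(\phi(\tau,\tau,\rho)\,\phi(\sigma,\rho,\rho)\bigr)^{-1}=(\varepsilon_\rho\,p)^{-1}=p\,\varepsilon_\rho,
\]
which is \equref{A9.9}. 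Relations \equref{A9.7} and \equref{A9.8} come out the same way, now also calling on \equref{A6.2}, \equref{A6.3}, \equref{A9.5}, \equref{A9.6} and on $p=\varepsilon_\sigma\varepsilon_\tau\varepsilon_\rho$ to absorb the residual sign factors. There is no conceptual obstacle here; the only thing demanding care is the bookkeeping --- picking the correct permuted instance of each relation at each step and tracking $\varepsilon_\sigma,\varepsilon_\tau,\varepsilon_\rho,p\in\{\pm1\}$ so that they cancel as they should. Once the reduction above is in place, the remaining calculations are forced.
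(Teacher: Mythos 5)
Your core computation follows the same route as the paper: the authors likewise dismiss the forward implication as clear and devote the proof to deriving \equref{A9.7}--\equref{A9.9} from \equref{A6.1}--\equref{A6.3} and \equref{A9.4}--\equref{A9.6}. Your derivation of \equref{A9.9} is correct and of exactly the paper's kind (they obtain it in one line from the permuted \equref{A6.1} followed by the permuted \equref{A9.4}), and \equref{A9.7}, \equref{A9.8}, which you leave as ``the same way'', are indeed short computations that the paper writes out in full.

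The step that does not hold as stated is your reduction of the converse to the listed relations. It is not true that every instance of \equref{A1.1} with $x,y,z,t\in\{\sigma,\tau,\rho\}$ collapses, via normalization and happiness, to a triviality or to one of \equref{A6.1}--\equref{A6.3}, \equref{A9.4}--\equref{A9.9} up to a permutation of $(\sigma,\tau,\rho)$. For example, $(x,y,z,t)=(\sigma,\sigma,\tau,\rho)$ yields $p\,\phi(\sigma,\rho,\rho)\,\phi(\sigma,\sigma,\tau)=\varepsilon_\sigma$, i.e.\ $\phi(\sigma,\rho,\rho)\,\phi(\sigma,\sigma,\tau)=p\,\varepsilon_\sigma$, and inspecting the repeated-letter patterns shows this is not a permuted instance of any relation in your list; the same happens for the orbit types represented by $(\sigma,\tau,\sigma,\rho)$, $(\sigma,\tau,\tau,\rho)$ and $(\tau,\tau,\sigma,\tau)$, none of which is recorded in \leref{A6}. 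These relations are indeed consequences of your hypotheses --- e.g.\ the displayed one follows from the permuted \equref{A9.4}, $\phi(\sigma,\rho,\rho)\phi(\sigma,\sigma,\rho)=p$, together with the permuted \equref{A6.2}, $\phi(\sigma,\sigma,\rho)=\varepsilon_\sigma\phi(\sigma,\sigma,\tau)$ --- but they need their own short derivations of the same kind as \equref{A9.7}--\equref{A9.9}, not a mere collapse, so the sentence ``thus $\phi$ is a $3$-cocycle if and only if all of those relations hold'' does not yet follow from what you proved. The repair is routine: either run the same bookkeeping for these extra orbit types, or note that the hypotheses already determine $\phi$ completely (as in the proof of \thref{A10}) and verify the remaining instances on that parametrized family. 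To be fair, the published proof is silent on this reduction altogether and only establishes the ``in other words'' clause, so you are attempting to be more complete than the paper; but the dichotomy you assert is false as written and must be amended along these lines.
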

\begin{proof}
One implication is clear. Conversely, suppose that (\ref{eq:A6.1}-\ref{eq:A6.3}) and 
(\ref{eq:A9.4}-\ref{eq:A9.6}) are satisfied. We show that (\ref{eq:A9.7}-\ref{eq:A9.9}) hold. 
Indeed, we have 
\begin{eqnarray*}
\phi(\rho, \rho, \sigma) \phi(\sigma, \tau, \tau)&\equal{(\ref{eq:A6.1},\ref{eq:A6.2})}&
\varepsilon_\rho \phi(\rho, \rho, \tau) \varepsilon_\tau \phi(\rho, \tau, \tau)
\equal{\equref{A9.4}} \varepsilon_\rho\varepsilon_\tau p=\varepsilon_\sigma;\\
p\phi(\tau, \rho, \tau)\phi(\sigma, \sigma, \tau)&\equal{(\ref{eq:A6.3},\ref{eq:A9.4})}&
p\varepsilon_\tau \phi(\tau, \sigma, \tau)^{-1} p \phi(\sigma, \tau, \tau)^{-1}\\
&\equal{(\ref{eq:A9.5},\ref{eq:A6.1})}&
\phi(\sigma, \tau, \sigma)p \phi(\rho, \tau, \tau)^{-1}
\equal{\equref{A9.4}} \phi(\sigma, \tau, \sigma)\phi(\rho, \rho, \tau);\\
\phi(\tau, \rho, \rho)\phi(\sigma, \sigma, \rho)&\equal{\equref{A6.1}}&
\varepsilon_\rho \phi(\sigma, \rho, \rho)\phi(\sigma, \sigma, \rho)
\equal{\equref{A9.4}}p\varepsilon_\rho ,
\end{eqnarray*}
and this finishes the proof.
\end{proof}

\begin{theorem}\thlabel{A10}
Let $\phi$ be a happy normalized 3-cocycle. $\phi$ is completely determined by
$\varepsilon_\sigma$, $\varepsilon_\tau$, $\varepsilon_\rho$,
$a=\phi(\tau, \sigma, \sigma)$ and $b=\phi(\sigma, \tau, \sigma)$. More precisely, we have
\begin{eqnarray}
a&=&\phi(\tau, \sigma, \sigma)=p\phi(\sigma, \tau, \tau)=p\phi(\tau, \tau, \sigma)^{-1}
=\phi(\sigma, \sigma, \tau)^{-1}\nonumber\\
&=&\varepsilon_\sigma \phi(\rho, \sigma, \sigma)=p\varepsilon_\sigma \phi(\sigma, \rho, \rho)=
\varepsilon_\sigma \phi(\sigma, \sigma, \rho)^{-1}=p\varepsilon_\sigma \phi(\rho, \rho, \sigma)^{-1}
\nonumber\\
&=&p\varepsilon_\tau \phi(\rho, \tau, \tau)=\varepsilon_\tau \phi(\tau, \rho, \rho)=
p\varepsilon_\tau \phi(\tau, \tau, \rho)^{-1}=\varepsilon_\tau \phi(\rho, \rho, \tau)^{-1}
\eqlabel{A10.1}\\
b&=&\phi(\sigma, \tau, \sigma)=p\varepsilon_\sigma \phi(\rho, \sigma, \rho)
=p\varepsilon_\tau \phi(\tau, \rho, \tau)\nonumber\\
&=&p\phi(\tau, \sigma, \tau)^{-1}=\varepsilon_\sigma \phi(\sigma, \rho, \sigma)^{-1}=
\varepsilon_\tau \phi(\rho, \tau, \rho)^{-1}\hspace{1mm}.\nonumber
\end{eqnarray}
\end{theorem}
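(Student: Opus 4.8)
The plan is to determine $\phi$ on every triple in $G^3$ by feeding the values we already know into the cocycle relations of \leref{A6}. First I would dispose of the ``non-mixed'' triples: by \leref{A1} and the normalization hypothesis, $\phi$ equals $1$ on any triple one of whose entries is $e$; by happiness $\phi(x,x,x)=\varepsilon_x$ and $\phi$ equals $p$ on every permutation of $(\sigma,\tau,\rho)$. Counting, the only triples left are the eighteen with all entries in $\{\sigma,\tau,\rho\}$ and exactly two of them equal. These split into the twelve having the odd entry in the first or the third slot -- which will be expressed through $a=\phi(\tau,\sigma,\sigma)$ -- and the six having the odd entry in the middle slot, expressed through $b=\phi(\sigma,\tau,\sigma)$.

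For the first group I would use only \equref{A6.1}, \equref{A6.2}, \equref{A9.4}, \equref{A9.6} together with the versions obtained by permuting $(\sigma,\tau,\rho)$, which \leref{A6} and \prref{A9} license. Starting from $a=\phi(\tau,\sigma,\sigma)$: the permuted \equref{A6.1} gives $\phi(\rho,\sigma,\sigma)=\varepsilon_\sigma a$, the permuted \equref{A9.4} gives $\phi(\tau,\tau,\sigma)=pa^{-1}$, the permuted \equref{A9.6} gives $\phi(\sigma,\sigma,\tau)=a^{-1}$, and each further application of one of these four identities either multiplies by some $\varepsilon$, inverts, or multiplies by $p$, so a short chain reaches each of the remaining eight values; along the way one repeatedly invokes $p=\varepsilon_\sigma\varepsilon_\tau\varepsilon_\rho$ from \leref{A7} to rewrite products such as $p\varepsilon_\rho\varepsilon_\sigma$ as $\varepsilon_\tau$. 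This yields exactly the first three displayed lines of \equref{A10.1}. The second group is handled the same way with \equref{A6.3} and \equref{A9.5}: from $b=\phi(\sigma,\tau,\sigma)$, \equref{A9.5} gives $\phi(\tau,\sigma,\tau)=pb^{-1}$ and the permuted \equref{A6.3} gives $\phi(\sigma,\rho,\sigma)=\varepsilon_\sigma b^{-1}$, and then alternating the two relations around the three possible repeated letters produces $\phi(\rho,\sigma,\rho)$, $\phi(\tau,\rho,\tau)$ and $\phi(\rho,\tau,\rho)$, completing the last line of \equref{A10.1}.

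Because $\phi$ is assumed to be an honest $3$-cocycle, the relations not used in the chains above impose no new constraint; checking that they are compatible with the computed values (for example that the two permuted instances of \equref{A9.4} and \equref{A9.6} agree on $\phi(\tau,\tau,\sigma)$) comes out for free, and simply re-witnesses the redundancy recorded in \prref{A9}. I do not expect a genuine obstacle here: the difficulty is purely organisational -- eighteen unknowns tied together by many relations -- so the real care goes into choosing for each unknown a short derivation back to $a$ or $b$ and tracking the signs and inverses, and in particular into verifying that the twelve end-slot values require only \equref{A6.1}, \equref{A6.2}, \equref{A9.4}, \equref{A9.6} while the six middle-slot values require only \equref{A6.3}, \equref{A9.5}, so that $\phi$ is pinned down by exactly the five parameters $\varepsilon_\sigma,\varepsilon_\tau,\varepsilon_\rho,a,b$.
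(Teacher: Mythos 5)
Your proposal is correct and follows essentially the same route as the paper: the paper's proof likewise pins down the twelve ``end-slot'' values by chaining \equref{A6.1}, \equref{A6.2}, \equref{A9.4}, \equref{A9.6} (and permutations) back to $a$, and the six ``middle-slot'' values by chaining \equref{A6.3}, \equref{A9.5} back to $b$, simplifying signs with $p=\varepsilon_\sigma\varepsilon_\tau\varepsilon_\rho$ from \leref{A7}. The sample derivations you exhibit agree with \equref{A10.1}, so the only thing missing is writing out the remaining short chains explicitly, exactly as the paper does.
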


\begin{proof}
Remark first that some of the cocycle conditions simplify if $\phi$ is happy. Using these
simplified cocycle relations, we compute
\begin{eqnarray*}
\phi(\tau, \tau, \sigma)^{-1}&\equal{\equref{A9.4}}&p \phi(\tau, \sigma, \sigma)=pa\hspace{1mm},\\
\phi(\sigma, \tau, \tau)&\equal{\equref{A9.6}}&\phi(\tau, \tau, \sigma)^{-1}\equal{\equref{A9.4}}p 
\phi(\tau, \sigma, \sigma)=p a\hspace{1mm},\\
\phi(\sigma, \sigma, \tau)^{-1}&\equal{\equref{A9.4}}&p\phi(\sigma, \tau, \tau)=a\hspace{1mm},\\
\phi(\rho, \sigma, \sigma)&\equal{\equref{A6.1}}&\varepsilon_\sigma \phi(\tau, \sigma, \sigma)
=a \varepsilon_\sigma\hspace{1mm},\\
\phi(\rho, \tau, \tau)&\equal{\equref{A6.1}}&\varepsilon_\tau \phi(\sigma, \tau, \tau)
=pa \varepsilon_\tau\hspace{1mm},\\
\phi(\sigma, \sigma,\rho)&\equal{\equref{A6.2}}&\varepsilon_\sigma \phi(\sigma, \sigma, \tau)
=a^{-1} \varepsilon_\sigma\hspace{1mm},\\
\phi(\tau, \tau, \rho)&\equal{\equref{A6.2}}&\varepsilon_\tau \phi(\tau, \tau, \sigma)
\equal{\equref{A9.4}}pa^{-1} \varepsilon_\tau\hspace{1mm},\\
\phi(\sigma, \rho, \rho)&\equal{\equref{A9.4}}&p\phi(\sigma, \sigma, \rho)^{-1} =pa 
\varepsilon_\sigma\hspace{1mm},\\
\phi(\tau, \rho, \rho)&\equal{\equref{A9.4}}&p\phi(\tau, \tau, \rho)^{-1} 
=a \varepsilon_\tau\hspace{1mm},\\
\phi(\rho, \rho, \sigma)&\equal{\equref{A9.4}}&p\phi(\rho, \sigma, \sigma)^{-1}=
pa^{-1} \varepsilon_\sigma\hspace{1mm},\\
\phi(\rho, \rho, \tau)&\equal{\equref{A9.4}}&p\phi(\rho, \tau, \tau)^{-1}
=a^{-1} \varepsilon_\tau \hspace{1mm},\\
\phi(\tau, \sigma, \tau)&\equal{\equref{A9.5}}&p\phi(\sigma, \tau, \sigma)^{-1}
=pb^{-1}\hspace{1mm},\\
\phi(\tau, \rho, \tau)&\equal{\equref{A6.3}}&\varepsilon_\tau \phi(\tau, \sigma, \tau)^{-1}=
p\varepsilon_\tau b\hspace{1mm},\\
\phi(\rho, \tau, \rho)&\equal{\equref{A9.5}}&p\phi(\tau, \rho, \tau)^{-1}=\varepsilon_\tau 
b^{-1}\hspace{1mm},\\
\phi(\rho, \sigma, \rho)&\equal{\equref{A6.3}}&\varepsilon_\rho \phi(\rho, \tau, \rho)^{-1}
=\varepsilon_\rho\varepsilon_\tau b=p\varepsilon_\sigma b\hspace{1mm},\\
\phi(\sigma, \rho, \sigma)&\equal{\equref{A6.3}}&\varepsilon_\sigma \phi(\sigma, \tau, \sigma)^{-1}
=\varepsilon_\sigma b^{-1}\hspace{1mm},
\end{eqnarray*} 
as we claimed, so our proof is complete.
\end{proof}

The maps $\phi$ described in \thref{A10} are indexed by the following parameters: 
$\varepsilon_\sigma$, $\varepsilon_\tau$, $\varepsilon_\rho\in \{- 1, 1\}$ and 
$a, b\in k^*$. It is a routine computation to verify that they all satisfy 
(\ref{eq:A6.1}-\ref{eq:A6.3}) and (\ref{eq:A9.4}-\ref{eq:A9.6}), hence they are all $3$-cocycles, 
by \prref{A9}. This tells us that there is a bijection from the subgroup 
$Z^3_h(C_2\times C_2, k^*)$ of $Z^3(C_2\times C_2, k^*)$ 
consisting of happy normalized cocycles, to the set $C_2^3\times (k^*)^2$.

Let $H_1$ be the subset of $Z^3_h(C_2\times C_2, k^*)$ for which the corresponding
parameters $a$ and $b$ are equal to $1$. This is also the subset of $Z^3_h(C_2\times C_2, k^*)$
consisting of cocycles $\phi$ for which $\phi(\tau, \sigma, \sigma)=\phi(\sigma, \tau, \sigma)=1$.
It is then clear that $H_1$ is a subgroup of $Z^3_h(C_2\times C_2, k^*)$, consisting of $8$ 
elements:
$$
H_1=\{\phi_X\mid X\subseteq \{\sigma,\tau,\rho\}\}.
$$
$\phi_\emptyset$ is the trivial $3$-cocycle and for a 
non-empty subset $X$ of $\{\sigma, \tau, \rho\}$, $\phi_X$ is the $3$-cocycle defined as follows: 
$\varepsilon_x=-1$ if and only if $x\in X$. The multiplication on $H_1$ is the following:
$$
\phi_X\phi_Y= \phi_{X\Delta Y},
$$
where $X\Delta Y=(X\setminus Y)\cup (Y\setminus X)$ is the symmetric difference of the sets $X$ 
and $Y$. $\phi_X$ is an even cocycle if and only if $|X|$ is even. It follows that 
$H_1\cong C_2\times C_2\times C_2$.

Since a normalized cocycle takes the value $1$ if one of the three entries is equal to $e$,
we can view them as functions $\{\sigma,\tau,\rho\}^3\to k^*$.
From the description in \thref{A10}, it follows that the eight cocycles in $H_1$ are invariant
under permutation: $\phi_X\circ s=\phi_X$, for all $X\subset\{\sigma,\tau,\rho\}$
and $s\in S_3$.

Let $\{P_e,P_\sigma,P_\tau,P_\rho\}$ be the basis of $k[G]^*\cong k^G$ dual to the basis
$\{e,\sigma,\tau,\rho\}$ of $k[G]$. Then the following elements of $k^{G\times G\times G}
\cong k[G]^*\ot k[G]^*\ot k[G]^*$ are invariant under permutations:
$$
X=\sum_{s\in S_3} P_{s(\sigma)}\ot P_{s(\tau)}\ot P_{s(\rho)},
$$
$$
X_x= P_x\ot P_x\ot P_x~~~~(x\in\{\sigma,\tau,\rho\}),
$$
$$
X_{x,y}= P_x\ot P_y\ot P_y +P_y\ot P_x\ot P_y + P_y\ot P_y\ot P_x
~~~~(x\neq y\in\{\sigma,\tau,\rho\}).
$$
Viewed as maps $G\times G\times G\to k^*$, these can also be described as follows:
$$
X(x,y,z)=
\begin{cases}
1&{\rm if}~ \{x,y,z\}=\{\sigma,\tau,\rho\}\\
0&{\rm otherwise}
\end{cases},
$$
$$
X_\sigma(x,y,z)=
\begin{cases}
1&{\rm if}~x=y=z=\sigma\\
0&{\rm otherwise}
\end{cases},
$$
and $X_{\sigma,\tau}(x,y,z)=1$ if one element of $(x,y,z)$ equals $\sigma$ and the two other
ones equal $\tau$, and $X_{\sigma,\tau}(x,y,z)=0$ otherwise.

Also observe that the $X$, $X_x$ and $X_{x,y}$ are orthogonal. From \thref{A10},
we now deduce the following formulas for the elements of $H_1$,
\begin{eqnarray}
\phi_{\{\sigma\}}&=&1-2(X_\sigma + X_{\sigma,\tau}+X_{\rho,\tau}+X_{\rho,\sigma}+X)\nonumber\\
\phi_{\{\tau\}}&=&1-2(X_\tau + X_{\sigma,\rho}+X_{\tau,\rho}+X_{\sigma,\tau}+X)\nonumber\\
\phi_{\{\rho\}}&=&1-2(X_\rho + X_{\sigma,\tau}+X_{\rho,\tau}+X_{\sigma,\rho}+X)\nonumber\\
\phi_{\{\sigma,\tau\}}&=&1-2(X_\sigma+X_\tau+X_{\rho,\tau}+X_{\tau,\rho}
+X_{\rho,\sigma}+X_{\sigma,\rho})\eqlabel{cocycleprim}\\
\phi_{\{\sigma,\rho\}}&=&1-2(X_\sigma+X_\rho+X_{\rho,\sigma}+X_{\sigma,\rho})\nonumber\\
\phi_{\{\tau,\rho\}}&=&1-2(X_\tau+X_\rho+X_{\rho,\tau}+X_{\tau,\rho})\nonumber\\
\phi_{\{\sigma,\tau,\rho\}}&=&1-2(X_\sigma+X_\tau+X_\rho+X_{\rho,\sigma}+
X_{\sigma,\tau}+ X_{\tau,\rho}+X).\nonumber
\end{eqnarray}
For any $b\in k^*$, let $g_b$ be the cocycle that we obtain taking $a=1$, 
$\varepsilon_\sigma=\varepsilon_\tau=\varepsilon_\rho=1$ in \thref{A10}. We have
\begin{equation}\eqlabel{cocyclesec}
g_b(x, y, z)=\left\{
\begin{array}{cl}
b&\mbox{if $(x, y, z)\in\{(\sigma, \tau, \sigma), (\rho, \sigma, \rho), (\tau, \rho, \tau)\}$}\\
b^{-1}&\mbox{if $(x, y, z)\in\{(\tau, \sigma, \tau), (\sigma, \rho, \sigma), (\rho, \tau, \rho)\}$}\\
1&\mbox{otherwise}
\end{array}\right. .
\end{equation}
For any $a\in k^*$, let $h_a$ be the cocycle that we obtain taking $b=1$, 
$\varepsilon_\sigma=\varepsilon_\tau=\varepsilon_\rho=1$ in \thref{A10}. Thus
\[
h_a(x, y, z)=\left\{
\begin{array}{cl}
a&\mbox{if $e\not=x\not=y=z\not=e$}\\
a^{-1}&\mbox{if $e\not=x=y\not=z\not=e$}\\
1&\mbox{otherwise}
\end{array}\right. .
\]

It is clear that $H_2=\{g_b\mid b\in k^*\}$ and $H_3=\{h_a\mid a\in k^*\}$ are subgroups of
$Z^3_h(C_2\times C_2, k^*)$. Therefore $Z^3_h(C_2\times C_2, k^*)=H_1\times H_2\times H_3$,
and we have the following result.

\begin{corollary}\colabel{A11}
We have an isomorphism of abelian groups
$$Z^3_h(C_2\times C_2, k^*)\cong C_2\times C_2\times C_2 \times k^*\times k^*.$$
\end{corollary}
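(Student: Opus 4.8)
The plan is to promote the bijection already obtained in the discussion following \thref{A10} to a group isomorphism. Recall that assigning to a happy normalized $3$-cocycle $\phi$ the tuple $\Theta(\phi)=(\varepsilon_\sigma,\varepsilon_\tau,\varepsilon_\rho,a,b)$, where $\varepsilon_x=\phi(x,x,x)$ for $x\in\{\sigma,\tau,\rho\}$, $a=\phi(\tau,\sigma,\sigma)$ and $b=\phi(\sigma,\tau,\sigma)$, was shown there to be a bijection from $Z^3_h(C_2\times C_2,k^*)$ onto the set $C_2^3\times(k^*)^2$. I would simply observe that $\Theta$ is moreover a group homomorphism: each of its five coordinates is the evaluation of the cocycle at a fixed point of $G^3$, and evaluation is multiplicative for the pointwise product, so $\Theta(\phi\psi)=\Theta(\phi)\Theta(\psi)$ and $\Theta(\phi^{-1})=\Theta(\phi)^{-1}$. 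Hence $\Theta$ is an isomorphism $Z^3_h(C_2\times C_2,k^*)\cong C_2^3\times(k^*)^2\cong C_2\times C_2\times C_2\times k^*\times k^*$, which is the assertion. For this to make sense one also checks that $Z^3_h(C_2\times C_2,k^*)$ is a subgroup of $Z^3(C_2\times C_2,k^*)$: normalization is preserved by products and inverses, and ``happy'' is the conjunction of conditions (1)--(3) before \thref{A10}, each likewise preserved once one notes $p=\varepsilon_\sigma\varepsilon_\tau\varepsilon_\rho$ by \leref{A7}, so that $p(\phi\psi)=p(\phi)p(\psi)$.

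I would then translate the result into the internal direct product form announced right before the statement. One has $H_1=\{\phi_X\}$, a subgroup with $\phi_X\phi_Y=\phi_{X\Delta Y}$, and $H_2=\{g_b\}$, $H_3=\{h_a\}$, subgroups because $g_bg_{b'}=g_{bb'}$ and $h_ah_{a'}=h_{aa'}$ (checked coordinate by coordinate from \equref{cocyclesec} and from the displayed formula for $h_a$). Since $\Theta(\phi_X)=(\varepsilon_\sigma,\varepsilon_\tau,\varepsilon_\rho,1,1)$ with $\varepsilon_x=-1$ precisely for $x\in X$, $\Theta(g_b)=(1,1,1,1,b)$ and $\Theta(h_a)=(1,1,1,a,1)$, the isomorphism $\Theta$ carries $H_1$, $H_2$, $H_3$ onto the three obvious direct factors of $C_2^3\times(k^*)^2$; therefore $Z^3_h(C_2\times C_2,k^*)=H_1\times H_2\times H_3$, with $H_1\cong C_2\times C_2\times C_2$ and $H_2\cong H_3\cong k^*$, which is again the statement.

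I do not expect a genuine obstacle: all the work is already done in \thref{A10} and the remark after it. The only point needing a moment's care is the independence of the three families, i.e. that the $a$- and $b$-coordinates of $\Theta$ evaluated on a product $\phi_Xg_bh_a$ return $a$ and $b$; the efficient way to see this is to evaluate at the two arguments $(\tau,\sigma,\sigma)$ and $(\sigma,\tau,\sigma)$ that define $a$ and $b$ in \thref{A10} and to read off from \equref{cocyclesec} and the formula for $h_a$ that $g_b$ is trivial at the first, $h_a$ is trivial at the second, and $\phi_X$ is trivial at both. With that noted, the corollary is immediate.
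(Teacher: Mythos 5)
Your proposal is correct and follows essentially the same route as the paper: the paper also obtains the result from the parametrization of happy normalized cocycles in \thref{A10} (the bijection with $C_2^3\times(k^*)^2$) together with the internal decomposition $Z^3_h(C_2\times C_2,k^*)=H_1\times H_2\times H_3$. Your only addition is to make explicit that the parametrization is a group homomorphism because its coordinates are evaluations at fixed points, which the paper leaves implicit.
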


It is easy to see that $h_ag_b$ is invariant under permutation if and only if $a=b=-1$.
Observe also that
\begin{eqnarray*}
h_{-1}g_{-1}&=&1-2(X_{\rho,\sigma}+X_{\sigma,\rho}+X_{\sigma,\tau}+X_{\tau,\sigma}
+X_{\tau,\rho}+X_{\rho,\tau});\\
h_{-1}g_{-1}\phi_{\{\sigma, \tau\}}&=&
1-2(X_\sigma+X_\tau+X_{\tau,\rho}+X_{\rho,\tau}).
\end{eqnarray*}
The subgroup $\tilde{H}$ of $Z^3_h(C_2\times C_2, k^*)$ consisting of
cocycles invariant under permutation is the subgroup generated by $H_1$
and $h_{-1}g_{-1}$, and it follows from \coref{A11} that
$\tilde{H}\cong C_2^4$.

In order to compute $H^3(C_2\times C_2, k^*)$, we now have to figure out which happy normalized cocycles are coboundaries. 

$H^3(C_2\times C_2, k^*)$ is an epimorphic image of $Z^3_h(C_2\times C_2, k^*)$.
We have to figure out which happy normalized cocycles are coboundaries. 

\begin{proposition}\prlabel{A12}
If $X\neq \emptyset$, then $\phi_X$ is not a coboundary.
$h_a$ is a coboundary, for every $a\in k^*$. $g_b$ is a coboundary if and only if 
$b$ has a squareroot in $k^*$. 
\end{proposition}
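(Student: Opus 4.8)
The plan is to treat the three claims separately, exploiting the direct product decomposition $Z^3_h(C_2\times C_2, k^*)=H_1\times H_2\times H_3$ from \coref{A11} and the fact that, by \leref{A1}, every normalized coboundary $\Delta_2(g)$ is controlled by the ten parameters $a_1,a_2,a_3,b_1,\dots,b_6,c\in k^*$ of \equref{A5.1}, with the values of $\Delta_2(g)$ at the relevant triples listed in \equref{A5.2a} and \equref{A5.2}.

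\emph{The cocycles $\phi_X$ with $X\neq\emptyset$ are not coboundaries.} The key invariant is the triple $(\varepsilon_\sigma,\varepsilon_\tau,\varepsilon_\rho)\in\{\pm1\}^3$ introduced in \equref{A6.0}: by the remark following \equref{A6.0}, these signs are unchanged when $\phi$ is replaced by a cohomologous cocycle, since every coboundary takes the value $1$ at $(\sigma,\sigma,\sigma)$, $(\tau,\tau,\tau)$, $(\rho,\rho,\rho)$ by \equref{A5.2a}. For a coboundary all three signs equal $1$, whereas for $\phi_X$ with $X\neq\emptyset$ at least one $\varepsilon_x=-1$. Hence $\phi_X$ is not a coboundary. (This also recovers, via \prref{A18}, that $H_1$ injects into $H^3(C_2\times C_2,k^*)$, the $\mu_2(k)\times\mu_2(k)$ part being detected by, say, $\varepsilon_\sigma$ and $\varepsilon_\tau$, while $\varepsilon_\rho$ is not free but satisfies $\varepsilon_\rho=p\varepsilon_\sigma\varepsilon_\tau$ with $p$ determined by $X$ — in fact $|X|$ even gives $p=1$, $|X|$ odd gives $p=-1$, so $(\varepsilon_\sigma,\varepsilon_\tau)$ already distinguishes all eight $\phi_X$.)

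\emph{Every $h_a$ is a coboundary.} Here one exhibits an explicit $g$. Since $h_a$ has $\varepsilon_\sigma=\varepsilon_\tau=\varepsilon_\rho=1$, $b=1$ and $a=\phi(\tau,\sigma,\sigma)$, it suffices to find parameters in \equref{A5.1} for which the listed values of $\Delta_2(g)$ in \equref{A5.2} reproduce those of $h_a^{-1}$ as forced by \thref{A10}. Trying $c=1$, $a_1=a_2=a_3=1$ and looking at $\Delta_2(g)(\tau,\sigma,\sigma)=b_4^{-1}b_3^{-1}a_1c$ and the other entries, one solves the resulting multiplicative linear system for $b_1,\dots,b_6$; a natural guess is to take the $b_i$ to be suitable powers of $a$ (for instance $b$'s on the "$\sigma$–$\tau$" pattern equal to $a^{\pm1}$), and then check that the ten constraints coming from \thref{A10} are all met. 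Because $h_a$ lies in the one-parameter subgroup $H_3\cong k^*$ and the map $a\mapsto(\text{choice of }g)$ can be taken multiplicative, it is enough to verify this for a generating family; alternatively just write down one $g=g^{(a)}$ and verify $h_a=\Delta_2(g^{(a)})$ by comparing values on all triples of $\{\sigma,\tau,\rho\}^3$, which is a finite check.

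\emph{$g_b$ is a coboundary iff $b$ is a square in $k^*$.} For the "if" direction, write $b=t^2$ and produce $g$ explicitly: $g_b$ has the pattern of \equref{cocyclesec}, and inspecting \equref{A5.2} one sees the diagonal parameters $a_1,a_2,a_3$ are the ones that can create the value $b$ at triples like $(\sigma,\tau,\sigma)$ only through ratios, so one expects a solution with $a_i\in\{t^{\pm1}\}$ and the $b_i,c$ chosen to kill all other entries; then verify $g_b=\Delta_2(g)$ on $\{\sigma,\tau,\rho\}^3$. For the "only if" direction — which I expect to be the main obstacle — one needs a cohomology invariant that obstructs non-squares. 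The natural candidate, matching the $k^*/k^{*(2)}$ factor in \prref{A18}, is built from the values of $\phi$ on a triple such as $(\sigma,\tau,\sigma)$: form a quantity like $\phi(\sigma,\tau,\sigma)\phi(\tau,\sigma,\tau)\cdots$ combined so that, modulo squares in $k^*$, it is insensitive to multiplying $\phi$ by any $\Delta_2(g)$ (using that each $\Delta_2(g)$-value in \equref{A5.2} is a product of parameters and their inverses, so the appropriate alternating product of these values over a permutation-orbit lands in $k^{*(2)}$). For $g_b$ this invariant evaluates to $b$ mod squares, while for a coboundary it is trivial mod squares; hence $g_b$ a coboundary forces $b\in k^{*(2)}$. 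Pinning down exactly which product of $\phi$-values is the right mod-squares invariant, and checking it is unchanged under all ten coboundary parameters, is the delicate point; everything else is finite bookkeeping over the group $\{\sigma,\tau,\rho\}^3$.
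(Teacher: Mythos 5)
Your first claim is handled exactly as in the paper: the signs $\varepsilon_x=\phi(x,x,x)$ are unchanged under multiplication by a coboundary because of \equref{A5.2a}, so $\phi_X$ with $X\neq\emptyset$ cannot be a coboundary. The other two claims, however, are only sketched, and in one place the sketch would actually fail. For $h_a$, the Ansatz you propose ($c=1$, $a_1=a_2=a_3=1$, solve for $b_1,\dots,b_6$) has no solution for general $a$: with those choices, requiring $\Delta_2(g)$ to be happy forces, via \equref{A5.2}, $b_1=b_2=b_3$ and $b_4=b_5=b_6$, whence $\Delta_2(g)(\sigma,\tau,\sigma)=(b_4b_1^{-1})^2$ and $\Delta_2(g)(\tau,\sigma,\sigma)=(b_3b_4)^{-1}$; matching $h_a$ (which needs the first value $1$ and the second $a$) then forces $\pm a\in k^{*(2)}$. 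The parameters that do the job are the diagonal ones, not the $b_i$: take $a_1=a_2=a_3=a$, all $b_i=1$, $c=1$; then \equref{A5.2} shows $\Delta_2(g)$ is happy with $\Delta_2(g)(\tau,\sigma,\sigma)=a$, $\Delta_2(g)(\sigma,\tau,\sigma)=1$, so $\Delta_2(g)=h_a$ by \thref{A10}. Similarly, for the ``if'' part of the $g_b$ claim you only ``expect'' a solution; the explicit choice (with $b=d^2$) is $a_1=a_2=a_3=b_4=b_5=b_6=d$, $b_1=b_2=b_3=1$, $c=1$, which gives a happy coboundary with parameters $a=1$, $b=d^2$, hence equal to $g_b$.

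The main gap is the ``only if'' direction for $g_b$, which you yourself flag as the delicate point but do not carry out: you postulate a $k^*/k^{*(2)}$-valued invariant built from values like $\phi(\sigma,\tau,\sigma)$ without identifying it or proving its invariance, and since a single value such as $\Delta_2(g)(\sigma,\tau,\sigma)=b_1^{-1}b_3^{-1}b_4b_5$ is not a square for arbitrary parameters, the naive candidate does not work as stated. No such invariant is needed: one argues directly on the equations. If $g_b=\Delta_2(g)$ with $g$ as in \equref{A5.1}, then evaluating at four triples where $g_b=1$, namely $(\sigma,\sigma,\tau)$, $(\tau,\sigma,\rho)$, $(\rho,\rho,\sigma)$, $(\sigma,\rho,\tau)$, and using \equref{A5.2}, one solves $b_1=a_1a_2a_3^{-1}b_4^{-1}c$ and $b_3=a_1a_2^{-1}a_3b_5^{-1}c$; substituting into $b=g_b(\sigma,\tau,\sigma)=b_1^{-1}b_3^{-1}b_4b_5$ yields $b=(a_1^{-1}b_4b_5c^{-1})^2\in k^{*(2)}$. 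Until you either produce a valid invariant with the claimed properties or perform this elimination, the ``only if'' statement (and with it the $k^*/k^{*(2)}$ factor of $H^3(C_2\times C_2,k^*)$) remains unproved in your write-up.
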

\begin{proof}
The first statement follows from the fact that $\phi(x, x, x)=1$ for every $x$ if $\phi$ is a 
normalized coboundary, cf. \equref{A5.2a}.

$h_a$ can be written as a coboundary in two different ways: take 
$g:\ C_2\times C_2\to k^*$ as in \equref{A5.1}, 
with $b_i=1$, for $i=1,\cdots, 6$, $a_1=a_2=a_3=a$ and $c=1$, or $c=a$, and all the
$a_i$ and $b_i$ equal to $1$. It follows from \equref{A5.2} 
that $\Delta_2(g)$ is happy, and that $\Delta_2(g)(\tau,\sigma,\sigma)=a$ and 
$\Delta_2(g)(\sigma,\tau,\sigma)=1$. Applying \thref{A10}, we see that $\Delta_2(g)=h_a$.

Assume that $b=d^2$, and consider $g:\ C_2\times C_2\to k^*$ as in \equref{A5.1},  
now with $a_1=a_2=a_3=b_4=b_5=b_6=d$, $b_1=b_2=b_3=1$, $c=1$. It follows from 
\equref{A5.2} that $\Delta_2(g)$ is happy, and that 
$\Delta_2(g)(\tau,\sigma,\sigma)=1$ and 
$\Delta_2(g)(\sigma,\tau,\sigma)=d^2=b$. \thref{A10} tells us that $\Delta_2(g)=g_b$, 
so $g_b$ is coboundary. 

Conversely, if $g_b$ is coboundary then  $g_b=\Delta_2(g)$, with
$g:\ C_2\times C_2\to k^*$ given by \equref{A5.1}, for some $a_1, a_2, a_3, 
b_1, b_2, b_3, b_4, b_5, b_6, c\in k^*$. From \equref{A5.2} and the description of 
$g_b$, it follows that
\[
1=g_b(\sigma,\sigma,\tau)=b_1b_5a_1^{-1}c^{-1}~~;~~
1=g_b(\tau,\sigma,\rho)=b_4^{-1}b_5a_2a_3^{-1};
\]
\[
1=g_b(\rho,\rho,\sigma)=b_3b_6a_3^{-1}c^{-1}~~;~~
1=g_b(\sigma,\rho,\tau)=b_5^{-1}b_6a_1a_2^{-1}.
\]
From the first two formulas it follows that $b_1=a_1a_2a_3^{-1}b_4^{-1}c$, and
combining the other two formulas we obtain that $b_3=a_1a_2^{-1}a_3b_5^{-1}c$.
Using \equref{A5.2}, we now find that
\begin{eqnarray*}
b&=&g_b(\sigma,\sigma,\tau)=b_1^{-1}b_3^{-1}b_4b_5\\
&=&a_1^{-1}a_2^{-1}a_3b_4c^{-1} a_1^{-1}a_2a_3^{-1}b_5c^{-1}b_4b_5
= (a_1^{-1}b_4b_5c^{-1})^2
\end{eqnarray*}
is a square in $k^*$.
\end{proof}
 
\begin{corollary}\colabel{A13}
$H^3(C_2\times C_2, k^*)=C_2\times C_2\times C_2\times k^*/k^{*(2)}$, where 
$k^{*(2)}=\{\a^2\mid \a\in k^*\}$.
\end{corollary}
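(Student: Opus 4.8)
The plan is to assemble the result from the structural facts already established in this section. By \coref{A11} we know $Z^3_h(C_2\times C_2, k^*) = H_1\times H_2\times H_3$, and by \prref{A8} every $3$-cocycle is cohomologous to a happy normalized one, so the quotient map $Z^3_h(C_2\times C_2, k^*)\to H^3(C_2\times C_2, k^*)$ is surjective. Hence $H^3(C_2\times C_2, k^*)$ is the quotient of $H_1\times H_2\times H_3$ by the subgroup $B^3_n(C_2\times C_2, k^*)\cap Z^3_h(C_2\times C_2, k^*)$ of happy normalized coboundaries. The whole problem therefore reduces to identifying this subgroup of coboundaries inside the product decomposition, and \prref{A12} does exactly that.

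First I would record what \prref{A12} gives in terms of the direct factors: no nontrivial $\phi_X\in H_1$ is a coboundary, every $h_a\in H_3$ is a coboundary, and $g_b\in H_2$ is a coboundary precisely when $b\in k^{*(2)}$. I would then argue that the subgroup $N$ of happy normalized coboundaries is exactly $\{1\}\times \{g_b \mid b\in k^{*(2)}\}\times H_3$. One inclusion is immediate from the three statements of \prref{A12} together with the fact that $N$ is a group (so it contains products $g_b h_a$ with $b$ a square). For the reverse inclusion, suppose $\phi = \phi_X g_b h_a\in N$; I want to conclude $X=\emptyset$ and $b\in k^{*(2)}$. Using that $h_a$ is already a coboundary, $\phi_X g_b$ is also a coboundary; now I evaluate at the triples $(\sigma,\sigma,\sigma),(\tau,\tau,\tau),(\rho,\rho,\rho)$, where $g_b$ takes value $1$ (by \eqref{eq:cocyclesec}) and every coboundary takes value $1$ (by \eqref{eq:A5.2a}), forcing $\va_\sigma=\va_\tau=\va_\rho=1$, i.e. $X=\emptyset$. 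Then $g_b$ itself is a coboundary, so $b\in k^{*(2)}$ by \prref{A12}. This pins down $N$ exactly.

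Finally I would compute the quotient:
\[
H^3(C_2\times C_2, k^*) \cong \frac{H_1\times H_2\times H_3}{\{1\}\times k^{*(2)}\times H_3}
\cong H_1\times (H_2/k^{*(2)}) \cong C_2\times C_2\times C_2\times k^*/k^{*(2)},
\]
using $H_1\cong C_2\times C_2\times C_2$ (established after \thref{A10}) and the identification $H_2\cong k^*$ under $g_b\mapsto b$, which carries the coboundary subgroup to $k^{*(2)}$. This matches \prref{A18}, since $k^*/k^{*(2)}\times \mu_2(k)\times\mu_2(k)\cong k^*/k^{*(2)}\times C_2\times C_2$ in characteristic $\neq 2$ — a useful sanity check, though strictly the homological computation already guarantees the abstract isomorphism type and the present argument upgrades it to an explicit description via happy cocycles.

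I do not expect a serious obstacle here: all the hard work has been done in \prref{A8}, \coref{A11}, and especially \prref{A12}. The only point requiring a little care is the reverse inclusion $N\subseteq \{1\}\times k^{*(2)}\times H_3$, i.e. checking that a product $\phi_X g_b h_a$ cannot accidentally become a coboundary unless each factor already is; the evaluation at the diagonal triples handles the $H_1$-component cleanly, and once that is gone the $H_2$-component is controlled directly by \prref{A12}. So the ``hard part'' is really just bookkeeping with the direct-product decomposition.
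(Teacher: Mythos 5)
Your proposal is correct and follows essentially the same route as the paper, whose proof of this corollary is left implicit: combine \prref{A8}, \coref{A11} and \prref{A12}, and your explicit identification of the happy normalized coboundaries as $\{1\}\times\{g_b\mid b\in k^{*(2)}\}\times H_3$ (via evaluation at the diagonal triples and \equref{A5.2a}) is precisely the bookkeeping the paper omits. One peripheral slip: your sanity check quotes \prref{A18} with only two $\mu_2(k)$ factors, whereas for $r=s=2$ it gives $k^*/k^{*(2)}\times\mu_2(k)\times\mu_2(k)\times\mu_{(2,2)}(k)$, i.e.\ three copies of $C_2$ in characteristic $\neq 2$, so the check does confirm your answer and nothing in the argument is affected.
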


\begin{corollary}\colabel{A14}
If every element of $k$ has a squareroot (for example, if $k$ is algebraically closed), 
then $H^3(C_2\times C_2,k^*)=C_2\times C_2\times C_2$.
\end{corollary}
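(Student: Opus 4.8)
The plan is to read off \coref{A14} directly from \coref{A13}, where the only factor that can be nontrivial is $k^*/k^{*(2)}$. First I would unwind the hypothesis: to say that every element of $k$ has a squareroot is to say precisely that $k^{*(2)}=k^*$. Indeed $k^{*(2)}\subseteq k^*$ always, and conversely if $\b\in k^*$ and $\a^2=\b$ then $\a\neq 0$ since a field has no zero divisors, so $\b=\a^2\in k^{*(2)}$. Hence $k^*/k^{*(2)}$ is the trivial group, and substituting into \coref{A13} yields $H^3(C_2\times C_2,k^*)=C_2\times C_2\times C_2$. The parenthetical remark about algebraically closed $k$ is then immediate, since over such a field the polynomial $x^2-\b$ has a root for every $\b\in k^*$.

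If one prefers not to invoke \coref{A13}, I would argue straight from the structure results of \seref{2}. By \coref{A11}, every normalized happy $3$-cocycle is a product $\phi_X h_a g_b$ with $X\subseteq\{\sigma,\tau,\rho\}$ and $a,b\in k^*$. By \prref{A12}, every $h_a$ is a coboundary and --- since under our hypothesis every element of $k^*$ is a square --- every $g_b$ is a coboundary as well; therefore $[\phi_X h_a g_b]=[\phi_X]$ in $H^3(C_2\times C_2,k^*)$. As $H^3(C_2\times C_2,k^*)$ is an epimorphic image of $Z^3_h(C_2\times C_2,k^*)$, the eight classes $[\phi_X]$, $X\subseteq\{\sigma,\tau,\rho\}$, exhaust the cohomology group.

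It then remains only to check that these eight classes are pairwise distinct. If $[\phi_X]=[\phi_Y]$, then $\phi_X\phi_Y^{-1}$ is a coboundary; but $H_1\cong C_2^3$ with multiplication the symmetric difference, so $\phi_X\phi_Y^{-1}=\phi_X\phi_Y=\phi_{X\Delta Y}$, and the first assertion of \prref{A12} forces $X\Delta Y=\emptyset$, i.e.\ $X=Y$. Consequently $X\mapsto[\phi_X]$ is a bijective group homomorphism $H_1\to H^3(C_2\times C_2,k^*)$, whence $H^3(C_2\times C_2,k^*)\cong H_1\cong C_2\times C_2\times C_2$.

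I do not expect a genuine obstacle: \coref{A14} is essentially a specialization of \coref{A13}, and even the direct route is short. The one point that should be spelled out --- and the one place where a careless argument could slip --- is the injectivity of $X\mapsto[\phi_X]$; it does not follow merely from each individual $\phi_X$ ($X\neq\emptyset$) being a noncoboundary, but genuinely uses that the product of two $\phi$-type happy cocycles is again of $\phi$-type, together with \prref{A12}.
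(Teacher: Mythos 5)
Your proposal is correct and follows the paper's route: \coref{A14} is read off directly from \coref{A13} by observing that the hypothesis says exactly $k^{*(2)}=k^*$, so the factor $k^*/k^{*(2)}$ collapses. Your alternative self-contained argument simply unpacks what lies behind \coref{A13} (the decomposition of \coref{A11} plus \prref{A12}, with the injectivity of $X\mapsto[\phi_X]$ via $\phi_X\phi_Y=\phi_{X\Delta Y}$), so it is the same mechanism rather than a genuinely different proof.
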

  
So a non-strict monoidal structure on ${\rm Vect}^{C_2\times C_2}$ is defined by the 
one of the $3$-cocycles defined in \equref{cocycleprim}, or by a $3$-cocycle as in 
\equref{cocyclesec} with $b\in k^*\backslash k^{*(2)}$. All the remaining monoidal 
structures of ${\rm Vect}^{C_2\times C_2}$ are monoidal isomorphic to the strict 
monoidal structure of ${\rm Vect}^{C_2\times C_2}$.   

All our computations are over fields of characteristic different from 2; they 
can be extended easily to the case where ${\rm char}(k)=2$. Then all $\varepsilon_x=1$, 
and we obtain the following result.
 
\begin{proposition}\prlabel{A15}
Let $k$ be a field of characteristic 2. Then $H^3(C_2\times C_2, k^*)=k^*/k^{*(2)}$. 
If every element of $k$ has a squareroot, then $H^3(C_2\times C_2,k^*)=1$.
\end{proposition}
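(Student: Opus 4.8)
The plan is to re-read the computations of \seref{2} under the hypothesis $\mathrm{char}(k)=2$ and to observe that they simplify drastically but otherwise carry over word for word. The crucial point is that \equref{A6.0}, which reads $\varepsilon_\sigma^2=1$ in the group $k^*$, now forces $\varepsilon_\sigma=1$: in characteristic $2$ one has $(t-1)^2=t^2-2t+1=t^2-1$, so $t^2=1$ gives $(t-1)^2=0$ and hence $t=1$, since $k$ is a field. The same argument applies to $\varepsilon_\tau$ and $\varepsilon_\rho$. Therefore every happy normalized $3$-cocycle has $\varepsilon_\sigma=\varepsilon_\tau=\varepsilon_\rho=1$, whence $p=\varepsilon_\sigma\varepsilon_\tau\varepsilon_\rho=1$ by \leref{A7}, and the subgroup $H_1=\{\phi_X\mid X\subseteq\{\sigma,\tau,\rho\}\}$ collapses: the only cocycle of the form $\phi_X$ that can occur is $\phi_\emptyset=1$.

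Next I would point out that \leref{A1}, \leref{A3}, \leref{A6}, \leref{A7}, \prref{A8}, \prref{A9}, \thref{A10} and \prref{A12} never use that $2$ is invertible in $k$: each of their proofs merely manipulates the cocycle identity \equref{A1.1} together with products in the abelian group $k^*$. So \prref{A8} still reduces the problem to happy normalized cocycles, and \thref{A10} still shows that such a cocycle is completely determined by the two parameters $a=\phi(\tau,\sigma,\sigma)$ and $b=\phi(\sigma,\tau,\sigma)$ in $k^*$, namely it equals $h_ag_b$. Combining this with the vanishing of $H_1$ yields
\[
Z^3_h(C_2\times C_2,k^*)=H_2\times H_3\cong k^*\times k^*.
\]

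To pass to cohomology, recall that $H^3(C_2\times C_2,k^*)$ is an epimorphic image of $Z^3_h(C_2\times C_2,k^*)$ by \prref{A8}. By \prref{A12}, whose proof is again purely group-theoretic, $h_a$ is a coboundary for every $a\in k^*$, while $g_b$ is a coboundary if and only if $b\in k^{*(2)}$; since $h_ag_b$ is a coboundary exactly when $g_b$ is, we obtain $H^3(C_2\times C_2,k^*)\cong k^*/k^{*(2)}$. Finally, if every element of $k$ has a square root then the multiplicative Frobenius $k^*\ra k^*$, $x\mapsto x^2$, is surjective, so $k^{*(2)}=k^*$ and $H^3(C_2\times C_2,k^*)=1$.

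I expect no real obstacle: the characteristic $2$ case is strictly easier than the one treated in \seref{2}. The only step requiring genuine care is the bookkeeping audit that confirms none of the cited lemmas silently invoked $2\neq 0$ (they do not), and a small sanity check that $k^{*(2)}$ is indeed a subgroup of $k^*$ in characteristic $2$ — it is, being the image of the injective Frobenius endomorphism — so that the quotient $k^*/k^{*(2)}$ makes sense.
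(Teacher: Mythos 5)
Your proposal is correct and follows essentially the same route as the paper, which proves \prref{A15} precisely by observing that the computations of \seref{2} carry over to characteristic $2$ with all $\varepsilon_x=1$ (so $H_1$ collapses), leaving $Z^3_h\cong k^*\times k^*$ via $h_ag_b$ and then invoking \prref{A12} to get $k^*/k^{*(2)}$. Your explicit audit that none of the cited lemmas divides by $2$, and the observation that $h_ag_b\in B^3$ iff $g_b\in B^3$, simply make explicit what the paper leaves as a remark.
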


\begin{remark}\relabel{A16}
Let $C_2=\{e,\alpha\}$. If the characteristic of $k$ is different from $2$, then
$Z^3(C_2,k^*)$ contains two cocycles. The nontrivial cocycle $\phi$ takes the
value $-1$ at $(\alpha,\alpha,\alpha)$, and $1$ elsewhere. Otherwise stated
$$\phi=1-2P_\alpha\ot P_\alpha\ot P_\alpha,$$
if $\{P_e,P_\alpha\}$ is the basis of $k[C_2]^*$ dual to $\{e,\alpha\}$.

Now we have three Hopf algebra morphisms $t_1,t_2,t_3:\
k[C_2]^*\to k[C_2\times C_2]^*$. These are given by the formulas
$$\begin{array}{ccc}
t_1(P_e)=P_e+P_\sigma&&t_1(P_\alpha)=P_\tau+P_\rho\\
t_2(P_e)=P_e+P_\tau&&t_2(P_\alpha)=P_\sigma+P_\rho\\
t_3(P_e)=P_e+P_\rho&&t_3(P_\alpha)=P_\tau+P_\sigma
\end{array}
$$
The $t_i$ induce group morphisms $t_i:\ Z^3(C_2,k^*)\to Z^3(C_2\times C_2,k^*)$.
Now we easily see that
\begin{eqnarray*}
t_1(\phi)&=& 1-2(P_\tau+P_\rho)\ot(P_\tau+P_\rho)\ot (P_\tau+P_\rho)=\phi_{\{\tau,\rho\}}\\
t_2(\phi)&=& 1-2(P_\sigma+P_\rho)\ot(P_\sigma+P_\rho)\ot (P_\sigma+P_\rho)=\phi_{\{\sigma,\rho\}}\\
t_3(\phi)&=& 1-2(P_\tau+P_\sigma)\ot(P_\tau+P_\sigma)\ot (P_\tau+P_\sigma)=
h_{-1}g_{-1}\phi_{\{\sigma,\tau\}}
\end{eqnarray*}
If $k$ contains a squareroot $i$ of $-1$, then $h_{-1}g_{-1}$ is a coboundary,
and $[t_1(\phi)][t_2(\phi)]=[t_3(\phi)]$ in $H^3(C_2\times C_2,k^*)$.
\end{remark}

\section{Computation of the abelian cocycles on the Vierergruppe}\selabel{braided} 
\setcounter{equation}{0}
\subsection{Computation of the quadratic forms}\selabel{4.1}
Throughout this Section, we assume that ${\rm char}(k)\neq 2$.
In order to describe the braidings of ${\rm Vect}^{C_2\times C_2}$, we have to compute 
$H^3_{\rm ab}(C_2\times C_2, k^*)$, see \seref{1.3}. To this end,
we will make use of the Eilenberg-Mac Lane Theorem, see \seref{1.5}: we will compute
$QF(C_2\times C_2, k^*)$.

\begin{lemma}\lelabel{4.1}
$Q:\ C_2\times C_2\to k^*$ is a quadratic form if and only if
\begin{enumerate}
\item[a)] $Q(e)=1$;
\item[b)] $Q(\sigma)^4=Q(\tau)^4=Q(\rho)^4=1$;
\item[c)] $Q(\sigma)^2Q(\tau)^2Q(\rho)^2=1$.
\end{enumerate}
\end{lemma}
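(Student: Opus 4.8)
The plan is to work directly from the definition of a quadratic form, specialized to the Klein group $G=C_2\times C_2=\{e,\sigma,\tau,\rho\}$. Recall that $Q$ is a quadratic form when $Q(x^{-1})=Q(x)$ for all $x$ and the identity \equref{quadraticform}, namely $Q(xyz)Q(x)Q(y)Q(z)=Q(xy)Q(xz)Q(yz)$, holds for all $x,y,z\in G$. Since every element of $G$ is its own inverse, the condition $Q(x^{-1})=Q(x)$ is automatic and carries no information, so the whole content is \equref{quadraticform}.

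First I would derive a). Take $x=y=z=e$ in \equref{quadraticform}: this gives $Q(e)^4=Q(e)^3$, hence $Q(e)=1$. Next, to get b), I would exploit that $\sigma^2=e$. Setting $x=y=\sigma$, $z=e$ in \equref{quadraticform} and using a) gives $Q(\sigma)Q(\sigma)Q(\sigma)Q(e)=Q(e)Q(\sigma)Q(\sigma)$, which merely re-confirms $Q(\sigma)=Q(\sigma)$ — not yet enough. Instead I would set $x=\sigma$, $y=\sigma$, $z=\sigma$: then $xyz=\sigma$, $xy=e$, $xz=e$, $yz=e$, so \equref{quadraticform} yields $Q(\sigma)Q(\sigma)Q(\sigma)Q(\sigma)=Q(e)Q(e)Q(e)=1$, i.e. $Q(\sigma)^4=1$; symmetrically $Q(\tau)^4=Q(\rho)^4=1$. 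For c), I would take the three distinct nontrivial elements, $x=\sigma$, $y=\tau$, $z=\rho$. Then $xyz=\sigma\tau\rho=\rho\rho=e$ (since $\sigma\tau=\rho$ and $\rho^2=e$), while $xy=\rho$, $xz=\sigma\rho=\tau$, $yz=\tau\rho=\sigma$; so \equref{quadraticform} gives $Q(e)Q(\sigma)Q(\tau)Q(\rho)=Q(\rho)Q(\tau)Q(\sigma)$, which again only restates an equality. So this particular substitution is the wrong one; the right one is $x=\sigma$, $y=\sigma$, $z=\tau$: then $xyz=\tau$, $xy=e$, $xz=\rho$, $yz=\rho$, giving $Q(\tau)Q(\sigma)^2Q(\tau)=Q(\rho)^2$, i.e. $Q(\sigma)^2Q(\tau)^2=Q(\rho)^2$, hence $Q(\sigma)^2Q(\tau)^2Q(\rho)^{-2}=1$; combining with $Q(\rho)^4=1$ this is equivalent to $Q(\sigma)^2Q(\tau)^2Q(\rho)^2=1$. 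This establishes the "only if" direction, and conditions a)--c) are exactly what falls out.

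For the converse I would assume a), b), c) and verify \equref{quadraticform} for all triples $(x,y,z)\in G^3$. Since $Q(e)=1$, any triple with an entry equal to $e$ reduces to a two-variable identity of the form $Q(xy)Q(x)Q(y)=Q(xy)Q(x)Q(y)$ (after cancelling the trivial factors and using $x\cdot e=x$), which is trivially true. So the only triples to check are those with all three entries in $\{\sigma,\tau,\rho\}$, of which there are $27$; by the evident $S_3$-symmetry of both sides of \equref{quadraticform} under permuting $x,y,z$ and the $S_3$-symmetry of the roles of $\sigma,\tau,\rho$, these collapse to a handful of cases: all three equal (e.g. $(\sigma,\sigma,\sigma)$), exactly two equal (e.g. $(\sigma,\sigma,\tau)$ and its permutations), and all three distinct ($(\sigma,\tau,\rho)$ up to permutation). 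The all-equal case is exactly b); the two-equal case is exactly c) rearranged (as in the computation above); and the all-distinct case reduces, using $\sigma\tau\rho=e$ and the pairwise products, to $Q(\sigma)Q(\tau)Q(\rho)=Q(\rho)Q(\tau)Q(\sigma)$, automatically true.

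The only mildly delicate point — and the place where one must be careful rather than where there is a genuine obstacle — is bookkeeping the group multiplication $\sigma\tau=\rho$, $\tau\rho=\sigma$, $\sigma\rho=\tau$ correctly in each substitution, and checking that the two-equal case really is equivalent to c) in the presence of b) (one direction needs $Q(\rho)^4=1$ to pass between $Q(\rho)^{-2}$ and $Q(\rho)^2$). I expect no substantive difficulty; the lemma is essentially a finite verification, and the proof will consist of recording the handful of useful specializations of \equref{quadraticform} listed above.
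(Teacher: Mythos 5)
Your proposal is correct and follows essentially the same route as the paper: the same specializations of \equref{quadraticform} (namely $x=y=z=e$, $x=y=z=\sigma$, and $x=y=\sigma$, $z=\tau$) for the forward direction, and the same case analysis (a factor $e$, all entries equal, two equal, all distinct) for the converse. Your explicit remark that the two-equal case needs $Q(\rho)^4=1$ to pass from $Q(\rho)^{-2}$ to $Q(\rho)^2$ is a small point the paper leaves implicit, but otherwise the arguments coincide.
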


\begin{proof}
Assume first that $Q$ is a quadratic form. a) follows after we take $x=y=z=e$ in
\equref{quadraticform}. b) Take $x=y=z=\sigma$ in \equref{quadraticform}.
Since $\sigma^3=\sigma$ and $\sigma^2=e$, it follows that $Q(\sigma)^4=1$.
c) Take $x=y=\sigma$ and $\rho=\tau$ in \equref{quadraticform}. Then we find
that $Q(\sigma)^2Q(\tau)^2=Q(\rho)^2$. Multiplying both sides by $Q(\rho)^2$,
we find c).

Conversely, assume that $Q$ satisfies conditions a), b) and c). 
Then $Q(x^{-1})=Q(x)$ is automatically satisfied, since $x=x^{-1}$, for all $x\in 
C_2\times C_2$. To prove \equref{quadraticform}, we distinguish several cases.

Case 1: $e\in \{x,y,z\}$, say $x=e$. Then \equref{quadraticform} reduces to 
$Q(yz)Q(y)Q(z)=Q(y)Q(z)Q(yz)$, which is satisfied.

Case 2: $e\not\in \{x,y,z\}$.

2a) $|\{x,y,z\}|=1$: $x=y=z$.  \equref{quadraticform} reduces to
$Q(x)^4=Q(e)^3=1$.

2b) $|\{x,y,z\}|=2$, say $x=y=\sigma$, $z=\tau$.  \equref{quadraticform} reduces to
$Q(\tau)Q(\sigma)^2Q(\tau)=Q(e)Q(\rho)Q(\rho)$.

2c) $|\{x,y,z\}|=3$, say $x=\sigma$, $y=\tau$, $z=\rho$. 
\equref{quadraticform} reduces to $Q(e)Q(\sigma)Q(\tau)Q(\rho)=
Q(\rho)Q(\tau)Q(\sigma)$.
\end{proof}

Assume first that $k$ contains $i$, a primitive fourth root of 1. Then $Q$ is a quadratic form if and only if
$Q(e)=1$, $Q(\sigma),Q(\tau),Q(\rho)\in \{\pm 1, \pm i\}$ and 
$Q(\sigma)Q(\tau)Q(\rho)=\pm1$. Then $QF(C_2\times C_2, k^*)$ has $32$ elements, summarized in 
\taref{QF}.

\begin{table}[htb]
\begin{center}
\begin{tabular}{c|r|r|r|r|r|r|r|r|}
 & $I$ & $A$ & $B$ & $C$ & $AB$ & $AC$ & $BC$ & $ABC$ \\
\hline
$Q(\sigma)$& $1$& $1$ & $1$& $-1$ & $1$ & $-1$ & $-1$ & $-1$ \\
\hline
$Q(\tau)$ & $1$& $1$ & $-1$& $1$ & $-1$ & $1$ & $-1$ & $-1$ \\
\hline
$Q(\rho)$& $1$& $-1$ & $1$& $1$ & $-1$ & $-1$ & $1$ & $-1$ \\
\hline
\end{tabular}\\

\vspace*{5mm}

\begin{tabular}{c|r|r|r|r|r|r|r|r|}
 & $E_1$ & $AE_1$ & $BE_1$ & $CE_1$ & 
 $ABE_1$ & $ACE_1$ & $BCE_1$ & $ABCE_1$ \\
\hline
$Q(\sigma)$ & $i$ & $i$ & $i$ & $-i$ & $i$ & $-i$ & $-i$ & $-i$\\
\hline
$Q(\tau)$ & $i$ & $i$ & $-i$ & $i$ & $-i$ & $i$ & $-i$ & $-i$\\
\hline
$Q(\rho)$ & $1$ & $-1$ & $1$ & $1$ & $-1$ & $-1$ & $1$ & $-1$\\
\hline 
\end{tabular}\\

\vspace*{5mm}

\begin{tabular}{c|r|r|r|r|r|r|r|r|}
 & $E_2$ & $AE_2$ & $BE_2$ & $CE_2$ & 
 $ABE_2$ & $ACE_2$ & $BCE_2$ & $ABCE_2$ \\
\hline
$Q(\sigma)$ & $i$ & $i$ & $i$ & $-i$ & $i$ & $-i$ & $i$ & $-i$\\
\hline
$Q(\tau)$ & $1$ & $1$ & $-1$ & $1$ & $-1$ & $1$ & $-1$ & $-1$\\
\hline
$Q(\rho)$ & $i$ & $-i$ & $i$ & $i$ & $-i$ & $-i$ & $i$ & $-i$\\
\hline 
\end{tabular}\\

\vspace*{5mm}

\begin{tabular}{c|r|r|r|r|r|r|r|r|}
 & $E_3$ & $AE_3$ & $BE_3$ & $CE_3$ & 
 $ABE_3$ & $ACE_3$ & $BCE_3$ & $ABCE_3$ \\
\hline
$Q(\sigma)$ & $1$ & $1$ & $1$ & $-1$ & $1$ & $-1$ & $-1$ & $-1$\\
\hline
$Q(\tau)$ & $i$ & $i$ & $-i$ & $i$ & $-i$ & $i$ & $-i$ & $-i$\\
\hline
$Q(\rho)$ & $i$ & $-i$ & $i$ & $i$ & $-i$ & $-i$ & $i$ & $-i$\\
\hline 
\end{tabular}
\end{center}

\vspace*{3mm}

\caption{The $32$ elements of $QF(C_2\times C_2, k^*)$\talabel{QF}}
\end{table}
Thus $QF(C_2\times C_2, k^*)$ is the abelian group consisting of $I$, $A$, $B$, $C$, $AB$, 
$AC$, $BC$, $ABC$; 
$E_j, AE_j, BE_j$, $CE_j, ABE_j, ACE_j, BCE_j, ABCE_j$, $j=1,2,3$, 
with relations
\begin{equation}\eqlabel{relabcocK}
\begin{array}{c}
A^2=B^2=C^2=I,~~E_1^2=BC,~~E_2^2=AC,~~E_3^2=AB\\
E_1E_2=CE_3,~~E_1E_3=BE_2,~~E_2E_3=AE_1,
\end{array}
\end{equation}
for all $j={1, 2,3}$. Hence $QF(C_2\times C_2, k^*)\cong C_4\times C_4\times C_2$, 
because it is an abelian group of order $32$ that contains precisely seven elements 
of order two and all the other non-trivial elements have order four.

If $k$ does not contain a fourth root of $1$, then we clearly have
$$QF(C_2\times C_2, k^*)=\{I, A, B, C, AB, AC, BC, ABC\}\cong C_2\times C_2\times C_2.$$
This describes $QF(C_2\times C_2, k^*)\cong H^3_{\rm ab}(C_2\times C_2, k^*)$. Our
aim is now to compute explicitly the abelian cocycles corresponding to
the 32 quadratic forms.

\subsection{Computation of the abelian cocycles}\selabel{4.2}
By abuse of language, we will say that a 3-cocycle $\phi\in Z^3(C_2\times C_2,k^*)$
is abelian if it is the underlying cocycle of an abelian coycle
$ (\phi,\Rr)\in Z^3_{\rm ab}(C_2\times C_2,k^*)$, or, equivalently, if
$\pi^{-1}(\phi)\neq\emptyset$, where $\pi:\ H^3_{\rm ab}(C_2\times C_2,k^*)\to
H^3(C_2\times C_2,k^*)$ is induced by the projection on the first component.

\begin{lemma}\lelabel{4.2}
For $b\in k^*$, $g_b$ is an abelian $3$-cocycle if and only if 
$g_b$ is coboundary as a $3$-cocycle.  
\end{lemma}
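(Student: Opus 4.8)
The plan is to prove the non-trivial implication: if $g_b$ is an abelian $3$-cocycle, then $g_b$ is a coboundary. One implication is immediate: if $g_b = \Delta_2(\psi)$ for a normalized $2$-cochain $\psi$, then $(g_b, \Rr_\psi)$ is an abelian $3$-coboundary by the general construction in \seref{1.3}, hence in particular $g_b$ is abelian. For the converse, suppose there exists $\Rr:\ G\times G\to k^*$ such that $(g_b, \Rr)$ satisfies the abelian cocycle relations \equref{r1coc} and \equref{r2coc}. The strategy is to extract enough numerical constraints from these relations, evaluated at well-chosen triples $(x,y,z)$ drawn from $\{\sigma,\tau,\rho\}$, to force $b$ to be a square in $k^*$; then \prref{A12} gives that $g_b$ is a coboundary.

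First I would record the shape of $g_b$ from \equref{cocyclesec}: it is $1$ except that $g_b(\sigma,\tau,\sigma)=g_b(\rho,\sigma,\rho)=g_b(\tau,\rho,\tau)=b$ and $g_b(\tau,\sigma,\tau)=g_b(\sigma,\rho,\sigma)=g_b(\rho,\tau,\rho)=b^{-1}$. I would also use \equref{r0coc}, that $\Rr(e,z)=\Rr(x,e)=1$. Then I would substitute into \equref{r2coc}, which reads $\phi(x,y,z)\Rr(x,yz)\phi(y,z,x)=\Rr(x,y)\phi(y,x,z)\Rr(x,z)$. The point of choosing \equref{r2coc} rather than \equref{r1coc} is that the two $\phi$-values on the left have $x$ in different ``slots'' ($\phi(x,y,z)$ has $x$ first, $\phi(y,z,x)$ has $x$ last), so the non-trivial entries of $g_b$ appear in a useful pattern. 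Plugging in triples such as $(\sigma,\tau,\rho)$, $(\sigma,\rho,\tau)$, $(\tau,\sigma,\rho)$ and their relatives (and also symmetric choices like $(x,y,z)=(\sigma,\sigma,\tau)$, $(\sigma,\tau,\sigma)$, etc.), I would solve for the off-diagonal values $\Rr(\sigma,\tau),\Rr(\tau,\sigma),\Rr(\sigma,\rho),\ldots$ in terms of $b$ and of the diagonal values $\Rr(\sigma,\sigma)$, $\Rr(\tau,\tau)$, $\Rr(\rho,\rho)$, which I expect to remain free. Multiplying together a suitable cyclic product of such relations should make all the $\Rr$-values cancel and leave an equation of the form $b = (\text{some product of }\Rr\text{-values})^2$, or more directly $b^2 = (\ldots)^2$ with the right side manifestly a square times something that cancels, yielding $b\in k^{*(2)}$.

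Alternatively — and this may be the cleaner route — I would argue more structurally. Since $g_b$ is symmetric under cyclic permutation of $(\sigma,\tau,\rho)$ and involves only $b^{\pm 1}$, and since $g_{b_1}g_{b_2} = g_{b_1 b_2}$, the map $b \mapsto [g_b]$ is a group homomorphism $k^* \to H^3(C_2\times C_2, k^*)$ whose kernel, by \prref{A12}, is exactly $k^{*(2)}$. The set of abelian $3$-cocycles in $Z^3_h$ forms a subgroup (it is the preimage of a subgroup under $\pi$), so $\{b : g_b \text{ abelian}\}$ is a subgroup of $k^*$ containing $k^{*(2)}$; I must show it is no larger. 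For this I would compute the quadratic form $Q(x) = \Rr(x,x)$ forced by $(g_b,\Rr)$: combining \equref{r1coc} at a triple like $(\sigma,\tau,\sigma)$ — where $\phi(\sigma,\sigma,\tau)=1$ but $\phi(\sigma,\tau,\sigma)=b$ — I expect a relation of the form $\Rr(\rho,\sigma)b = \Rr(\sigma,\sigma)\Rr(\tau,\sigma)$ or similar, and chaining three such relations around the cycle should isolate $b$ (or $b^{\pm 1}$) as a ratio of squares of diagonal $\Rr$-values, forcing $b \in k^{*(2)}$.

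\textbf{Main obstacle.} The bookkeeping is the hard part: there are many triples to plug in, the relations \equref{r1coc}--\equref{r2coc} mix $\phi$-values and $\Rr$-values nonlinearly-looking (though they are really multiplicative-linear in $\log$), and one must choose the cyclic combination of relations that makes every $\Rr$-value cancel except for a single clean square. The risk is getting $b = (\ldots)^2 \cdot (\ldots)$ with a leftover factor that is not obviously trivial; avoiding that requires exploiting the cyclic symmetry of $g_b$ systematically and using \equref{r0coc} to kill boundary terms. I expect that tracking the three ``diagonal'' unknowns $\Rr(\sigma,\sigma),\Rr(\tau,\tau),\Rr(\rho,\rho)$ as free parameters throughout, and only at the end multiplying the three cyclically-rotated copies of one well-chosen instance of \equref{r1coc} (or \equref{r2coc}), will produce the identity $b^2 = \bigl(\Rr(\sigma,\sigma)\Rr(\tau,\tau)\Rr(\rho,\rho)\bigr)^{\pm 2}\cdot(\text{something that cancels})$, whence $b$ is a square and \prref{A12} finishes the proof.
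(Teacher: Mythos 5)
Your proposal is correct and follows essentially the same route as the paper: specialize the abelian cocycle relations at triples drawn from $\{\sigma,\tau,\rho\}$ to force $b\in k^{*(2)}$, conclude by \prref{A12}, with the easy direction given by the abelian coboundary construction of \seref{1.3}. In fact a single one of the substitutions you list suffices: taking $x=y=\sigma$, $z=\tau$ in \equref{r1coc} and using \equref{r0coc} and \equref{cocyclesec} gives $b={\cal R}(\sigma,\tau)^2$ outright, so the chaining of cyclic relations and the feared leftover factor (or the weaker conclusion $b^2\in k^{*(4)}$) never arise.
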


\begin{proof}
Assume that $g_b$ is an abelian $3$-cocycle via a certain map 
${\cal R}: G\times G\ra k^*$. 
Taking $x=y=\sigma$ and $z=\tau$ in \equref{r1coc} we find that
$$
{\cal R}(e, \tau)g_b(\sigma, \tau, \sigma)=g_b(\sigma, \sigma, \tau){\cal R}(\sigma, \tau)^2
g_b(\tau, \sigma, \sigma).$$
By \equref{cocyclesec} we obtain $b={\cal R}(\sigma, \tau)^2$, and so $g_b$ is a coboundary 
$3$-cocycle on $C_2\times C_2$, cf. \prref{A12}.   
\end{proof}

Our next aim is to compute $\pi^{-1}(\phi_\emptyset)$. This allows to compute
$\pi^{-1}(\phi)$, for every coboundary $\phi$.

\begin{proposition}\prlabel{ab3cocycfromtriv} 
The subgroup $\pi^{-1}(\phi_\emptyset)$ of $H^3_{\rm ab}(C_2\times C_2,k^*)$ is
isomorphic to $C_2\times C_2\times C_2$. Its elements are of the form
$[(1,\Rr)]$, with $\Rr$ given by the following data:
\begin{eqnarray*}
&&{\cal R}(x, x)=\mu_x~~{\rm with}~~\mu_x^2=1,~~\forall~~x\in\{\sigma, \tau, \rho\},~~\\
&&{\cal R}(\sigma, \tau)=1,~~
{\cal R}(\tau, \sigma)=\mu_\sigma\mu_\tau\mu_\rho,~~
{\cal R}(\sigma, \rho)=\mu_\sigma,~~\\
&&{\cal R}(\rho, \sigma)=\mu_\tau\mu_\rho,~~
{\cal R}(\tau, \rho)=\mu_\sigma\mu_\rho,~~{\cal R}(\rho, \tau)=\mu_\tau,
\end{eqnarray*}
Moreover $EM(\pi^{-1}(\phi))$ is the subgroup of $QF(C_2\times C_2,k^*)$
generated by $A$, $B$ and $C$.
\end{proposition}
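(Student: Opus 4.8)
The plan is to start from the definition of abelian $3$-cocycle with underlying cocycle $\phi_\emptyset=1$, so that the relations \equref{r1coc} and \equref{r2coc} degenerate to the bilinearity-type conditions
\begin{eqnarray*}
&&{\cal R}(xy,z)={\cal R}(x,z){\cal R}(y,z);\\
&&{\cal R}(x,yz)={\cal R}(x,y){\cal R}(x,z),
\end{eqnarray*}
for all $x,y,z\in G$; in other words $\Rr$ is a bicharacter on $G=C_2\times C_2$. First I would record that a bicharacter on $C_2\times C_2$ is completely determined by its nine values ${\cal R}(x,y)$ with $x,y\in\{\sigma,\tau,\rho\}$, and that bilinearity in each slot forces, on the one hand, ${\cal R}(x,x)^2={\cal R}(x,x\cdot x)={\cal R}(x,e)=1$, so each ${\cal R}(x,x)=\mu_x\in\{\pm1\}$ (using \equref{r0coc}), and, on the other hand, the six ``mixed'' values to be products of the $\mu_x$: e.g.\ ${\cal R}(\sigma,\tau){\cal R}(\sigma,\sigma)={\cal R}(\sigma,\tau\sigma)={\cal R}(\sigma,\rho)$ and ${\cal R}(\sigma,\rho){\cal R}(\sigma,\tau)={\cal R}(\sigma,\rho\tau)={\cal R}(\sigma,\sigma)=\mu_\sigma$, whence ${\cal R}(\sigma,\tau)^2=1$ and the displayed relations among the off-diagonal entries. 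Running this bookkeeping through all rows and columns yields exactly the stated data with the three free sign parameters $\mu_\sigma,\mu_\tau,\mu_\rho$ (one checks the over-determined system is consistent, which is where the precise normalization ${\cal R}(\sigma,\tau)=1$ rather than $-1$ gets pinned down). Conversely each such $\Rr$ is manifestly a bicharacter, hence $(1,\Rr)$ is an abelian $3$-cocycle; so $\pi^{-1}(\phi_\emptyset)$, as a subgroup of $H^3_{\rm ab}$, is the image of the eight pairs $(1,\Rr_{\mu_\sigma,\mu_\tau,\mu_\rho})$.

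Next I would show that these eight classes are pairwise non-cohomologous and form a group $\cong C_2\times C_2\times C_2$. For this the cleanest route is the Eilenberg--Mac Lane isomorphism $EM$ of \thref{EM}: the trace of $(1,\Rr)$ is the quadratic form $Q(x)={\cal R}(x,x)=\mu_x$, and the three generating choices $(\mu_\sigma,\mu_\tau,\mu_\rho)=(-1,-1,1),(-1,1,-1),(1,-1,-1)$ give precisely the quadratic forms $AB$, $AC$, $BC$ of \taref{QF} (note $Q(\sigma)Q(\tau)Q(\rho)=1$ automatically for our $\Rr$, matching condition c) of \leref{4.1}). Since $EM$ is injective and the eight traces obtained are exactly the eight elements of the subgroup $\langle A,B,C\rangle=\{I,A,B,C,AB,AC,BC,ABC\}\cong C_2^3$ of $QF(C_2\times C_2,k^*)$, the eight classes $[(1,\Rr)]$ are distinct and $\pi^{-1}(\phi_\emptyset)\cong C_2\times C_2\times C_2$, with $EM(\pi^{-1}(\phi_\emptyset))=\langle A,B,C\rangle$. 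It also follows that the group structure on $\pi^{-1}(\phi_\emptyset)$ is that of $\{\pm1\}^3$ under pointwise multiplication of the $\mu_x$, as the formulas make visible.

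The main obstacle I anticipate is purely combinatorial rather than conceptual: verifying that the over-determined linear system for a bicharacter on $C_2\times C_2$ is consistent and admits exactly the three-parameter solution family written in the statement --- in particular confirming that one \emph{must} normalize ${\cal R}(\sigma,\tau)=1$ (and not, say, split the sign ambiguity differently between ${\cal R}(\sigma,\tau)$ and ${\cal R}(\tau,\sigma)$), which comes down to checking ${\cal R}(\sigma,\tau){\cal R}(\tau,\sigma)=\mu_\rho/(\mu_\sigma\mu_\tau)$ forces the value once we also impose ${\cal R}(\sigma,\tau)^2={\cal R}(\tau,\sigma)^2=1$ and choose the representative coherently. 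A secondary point worth a sentence is the remark that although we first wrote the general abelian $3$-cocycle with underlying $\phi=1$ and then restricted to representatives, every class in $\pi^{-1}(\phi_\emptyset)$ does contain such a bicharacter representative; this is immediate because modifying $\Rr$ by an abelian coboundary $\Rr_\psi(x,y)=\psi(x,y)^{-1}\psi(y,x)$ (with $\Delta_2(\psi)=1$, i.e.\ $\psi$ a $2$-cocycle) keeps it a bicharacter and does not change the trace, so no classes are lost.
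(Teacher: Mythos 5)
Your overall route --- reduce to bicharacters when the underlying cocycle is trivial, then separate classes by their traces via Eilenberg--Mac\ Lane --- is the same as the paper's, but there is a genuine error at the normalization step. Bilinearity does \emph{not} pin down ${\cal R}(\sigma,\tau)=1$. The relations you derive give only ${\cal R}(\sigma,\tau)^2=1$ and ${\cal R}(\sigma,\tau)\,{\cal R}(\tau,\sigma)=\mu_\sigma\mu_\tau\mu_\rho$, so for each triple $(\mu_\sigma,\mu_\tau,\mu_\rho)$ the system is consistent for \emph{both} signs $\alpha={\cal R}(\sigma,\tau)=\pm1$; there are $16$ bicharacters in total (the paper records ${\rm Bil}(C_2^2\times C_2^2,k^*)\cong{\rm End}(C_2^2)\cong C_2^4$). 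Hence your parenthetical claims that consistency ``pins down'' ${\cal R}(\sigma,\tau)=1$, and later that ${\cal R}(\sigma,\tau){\cal R}(\tau,\sigma)=\mu_\rho/(\mu_\sigma\mu_\tau)$ ``forces the value'', are false, and as written you never account for the classes of the eight bicharacters with $\alpha=-1$. The missing step is exactly the one the paper supplies: two bicharacters with the same diagonal values and opposite value at $(\sigma,\tau)$ are cohomologous as abelian $3$-cocycles, shown by exhibiting $g$ as in \equref{A5.1} with $a_1=a_2=a_3=b_4=b_5=b_6=-1$, $b_1=b_2=b_3=c=1$, for which $\Delta_2(g)=1$ while ${\cal R}_g(x,y)=g(x,y)^{-1}g(y,x)$ flips precisely that sign. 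Alternatively you could close the gap with a tool you already invoke: the trace does not see $\alpha$, so injectivity of $EM$ (\thref{EM}) identifies the two sign choices --- but this must be said explicitly; it does not follow from consistency of the bilinearity equations.

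Once that is repaired, the rest of your argument is correct and agrees with the paper: every class in $\pi^{-1}(\phi_\emptyset)$ has a representative of the form $(1,{\cal R})$ (for this, multiply a representative $(\Delta_2(\psi),{\cal R})$ by the inverse of the abelian coboundary $(\Delta_2(\psi),{\cal R}_\psi)$; your closing remark, which only varies $\psi$ with $\Delta_2(\psi)=1$, does not quite prove this), the eight bicharacters with ${\cal R}(\sigma,\tau)=1$ have pairwise distinct traces, namely all eight $\{\pm1\}$-valued quadratic forms $I,A,B,C,AB,AC,BC,ABC$ of \taref{QF}, and injectivity of $EM$ then gives $\pi^{-1}(\phi_\emptyset)\cong C_2\times C_2\times C_2$ with image $\langle A,B,C\rangle$. (Minor slips: with the table's conventions $(\mu_\sigma,\mu_\tau,\mu_\rho)=(1,-1,-1)$ has trace $AB$ and $(-1,-1,1)$ has trace $BC$, so your dictionary swaps $AB$ and $BC$; moreover $AB,AC,BC$ generate only a subgroup of order $4$, though since your conclusion uses all eight traces it stands.)
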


\begin{proof}
Let $(1,\Rr)$ be an abelian cocycle.
The relations (\ref{eq:r1coc}-\ref{eq:r2coc}) reduce to
\begin{equation}\eqlabel{rcocsimpl}
{\cal R}(xy, z)={\cal R}(x, z){\cal R}(y, z)~~{\rm and}~~
{\cal R}(x, yz)={\cal R}(x, y){\cal R}(x, z),
\end{equation}
for all $x,y,z\in C_2\times C_2$. This means that $\Rr$ is a bilinear map.
Observe that ${\rm Bil}(C_2^2\times C_2^2, k^*)\cong \Hom(C_2^2,(C_2^2)^*)
\cong {\rm End}(C_2^2)= C_2^4$. Here $(C_2^2)^*$ is the character group of
$C_2^2$, and the characters take values in $\{1,-1\}$. It follows that $\Rr$ takes values in
$\{1,-1\}$, and there are 16 different maps for $\Rr$. These are completely determined
by the values $\Rr(\sigma,\sigma)=\mu_\sigma,\Rr(\tau,\tau)=\mu_\tau,
\Rr(\rho,\rho)=\mu_\rho,\Rr(\sigma,\tau)=\alpha\in \{1,-1\}$. Indeed, the other values
of $\Rr$ follow from the bilinearity of $\Rr$:
\begin{eqnarray*}
\Rr(\sigma,\rho)&=&\Rr(\sigma,\sigma)\Rr(\sigma,\tau)=\alpha\mu_\sigma\\
\Rr(\rho,\tau)&=&\Rr(\sigma,\tau)\Rr(\tau,\tau)=\alpha\mu_\tau\\
\Rr(\rho,\sigma)&=&\Rr(\rho,\tau)\Rr(\rho,\rho)=\alpha\mu_\tau\mu_\rho\\
\Rr(\tau,\sigma)&=&\Rr(\sigma,\sigma)\Rr(\rho,\sigma)=\alpha\mu_\sigma\mu_\tau\mu_\rho\\
\Rr(\tau,\rho)&=&\Rr(\sigma,\rho)\Rr(\rho,\rho)=\alpha\mu_\sigma\mu_\rho .
\end{eqnarray*}
Now let $\Rr$ and $\Rr_-$ be two bilinear forms that reach the same values at
$(\sigma,\sigma)$, $(\tau,\tau)$ and $(\rho,\rho)$, and assume that
$\Rr(\sigma,\tau)=1=-\Rr_-(\sigma,\tau)$.Then  ${\cal R}$ and ${\cal R}_{-}$ 
are cohomologous as abelian $3$-cocycles on $C_2\times C_2$. To see this take 
$g:\ C_2\times C_2\ra k^*$ as in \equref{A5.1} with $a_1=a_2=a_3=b_4=b_5=b_6=-1$ and 
$b_1=b_2=b_3=c=1$. Then one can easily verify that $\Delta_2(g)=1$ and 
${\cal R}(x, y)=g(x, y)^{-1}g(y, x){\cal R}_{-}(x, y)$, for all $x, y\in C_2\times C_2$.

It is easy to see that the images under $EM$ of the eight bilinear forms that take the
value $1$ at $(\sigma,\tau)$ are $I$, $A$, $B$, $C$, $AB$, $AC$, $BC$ and $ABC$.
It then follows from the Eilenberg-Mac Lane \thref{EM} that these eight bilinear forms
represent different cohomology classes in  $H^3_{\rm ab}(C_2\times C_2,k^*)$.
\end{proof}

In \taref{trivial}, we present the eight abelian cocycles representing the elements of
$\pi^{-1}(\phi)$. Remark that the braidings associated to $I$, $AB$, $AC$ and $BC$
are symmetries on ${\rm Vect}^{C_2\times C_2}$. 
Indeed, by \equref{rcocsimpl} it follows that 
${\cal R}^{-1}(x, y)={\cal R}(x^{-1}, y)={\cal R}(x, y)$, for all $x, y\in C_2\times C_2$, so 
${\rm Vect}^{C_2\times C_2}$ is symmetric monoidal if and only if ${\cal R}(x, y)={\cal R}(y, x)$, 
for all $x, y\in C_2\times C_2$. Now \taref{trivial} below shows that only $I$, $AB$, $AC$ 
and $BC$ satisfy this condition.

\begin{table}[htb]
\begin{center}
\begin{tabular}{c|r|r|r|r|r|r|r|r|}
$EM(1,\Rr)$ & $I$ & $A$ & $B$ & $C$ & $AB$ & $AC$ & $BC$ & $ABC$ \\
\hline
${\cal R}(\sigma, \sigma)$& $1$& $1$ & $1$& $-1$ & $1$ & $-1$ & $-1$ & $-1$ \\
\hline
${\cal R}(\tau, \tau)$ & $1$& $1$ & $-1$& $1$ & $-1$ & $1$ & $-1$ & $-1$ \\
\hline
${\cal R}(\rho, \rho)$& $1$& $-1$ & $1$& $1$ & $-1$ & $-1$ & $1$ & $-1$ \\
\hline
${\cal R}(\sigma, \tau)$& $1$& $1$ & $1$& $1$ & $1$ & $1$ & $1$ & $1$ \\
\hline
${\cal R}(\tau, \sigma)$& $1$& $-1$ & $-1$& $-1$ & $1$ & $1$ & $1$ & $-1$ \\
\hline
${\cal R}(\sigma, \rho)$& $1$& $1$ & $1$& $-1$ & $1$ & $-1$ & $-1$ & $-1$ \\
\hline
${\cal R}(\rho, \sigma)$& $1$& $-1$ & $-1$& $1$ & $1$ & $-1$ & $-1$ & $1$ \\
\hline
${\cal R}(\tau, \rho)$& $1$& $-1$ & $1$& $-1$ & $-1$ & $1$ & $-1$ & $1$ \\
\hline
${\cal R}(\rho, \tau)$& $1$& $1$ & $-1$& $1$ & $-1$ & $1$ & $-1$ & $-1$ \\
\hline
\mbox{otherwise}& $1$& $1$ & $1$& $1$ & $1$ & $1$ & $1$ & $1$ \\
\hline 
\end{tabular}
\end{center}
\vspace*{3mm}
\caption{The eight abelian $3$-cocycles with trivial underlying $3$-cocycle
\talabel{trivial}}
\end{table}

We still have to compute the abelian $3$-cocycles in $\pi^{-1}(\phi_X)$, with $X$ a non-empty subset of 
$\{\sigma, \tau, \rho\}$. 

\begin{proposition}\prlabel{ab3cocycfromnontriv}
Let $X\subseteq \{\sigma, \tau, \rho\}$ be a non-empty subset. Then $\phi_X$ is an abelian 
$3$-cocycle if and only if it is even and $k$ contains a primitive fourth root of $1$. 
\end{proposition}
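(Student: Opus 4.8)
The plan is to prove the two implications separately, using the explicit form of $\phi_X$ from \thref{A10}: recall that $\phi_X$ is happy and normalized, with parameters $a=b=1$ and $\varepsilon_x=-1$ exactly for $x\in X$, so that $p=\varepsilon_\sigma\varepsilon_\tau\varepsilon_\rho=(-1)^{|X|}$, and ``$\phi_X$ even'' simply means $|X|$ even.

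For the ``only if'' implication I would substitute a few well-chosen triples into the abelian-cocycle relations \equref{r1coc}--\equref{r2coc}. Assume $(\phi_X,\Rr)$ is an abelian $3$-cocycle. Evaluating \equref{r1coc} at $(\sigma,\sigma,\tau)$ and using \equref{r0coc} together with $\phi_X(\sigma,\tau,\sigma)=\phi_X(\sigma,\sigma,\tau)=\phi_X(\tau,\sigma,\sigma)=1$ gives $\Rr(\sigma,\tau)^2=1$; evaluating \equref{r2coc} at $(\sigma,\tau,\tau)$ and using $\phi_X(\sigma,\tau,\tau)=\phi_X(\tau,\tau,\sigma)=\phi_X(\tau,\sigma,\tau)=p$ gives $p\,\Rr(\sigma,\tau)^2=1$, hence $p=1$, i.e.\ $|X|$ is even. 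Finally, for any $w\in X$ the relation \equref{r1coc} at $(w,w,w)$ collapses to $\Rr(w,w)^2=\varepsilon_w=-1$, and since $X\neq\emptyset$ this forces $k$ to contain a primitive fourth root of $1$.

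For the ``if'' implication, assume $|X|=2$ (the only non-empty even case) and that $k$ contains a primitive fourth root $i$ of $1$. Because the cocycles $\phi_Y$ are permutation-invariant (as remarked after \thref{A10}) and a relabeling by $s\in S_3$ turns an abelian cocycle with underlying cocycle $\phi_Y$ into one with underlying cocycle $\phi_{s(Y)}$, I may take $X=\{\sigma,\tau\}$, so $\varepsilon_\sigma=\varepsilon_\tau=-1$, $\varepsilon_\rho=1$, $p=1$. The plan is then to write down an explicit R-matrix: the computations above, run for all ordered pairs of distinct elements and at $(w,w,w)$, force $\Rr(\sigma,\sigma)^2=\Rr(\tau,\tau)^2=-1$, $\Rr(\rho,\rho)^2=1$, $\Rr(\sigma,\tau)^2=\Rr(\tau,\sigma)^2=1$ and $\Rr(\sigma,\rho)^2=\Rr(\rho,\sigma)^2=\Rr(\tau,\rho)^2=\Rr(\rho,\tau)^2=-1$, besides $\Rr(e,x)=\Rr(x,e)=1$; one then fixes a compatible choice of signs (say $\Rr(\sigma,\sigma)=\Rr(\tau,\tau)=i$, the cross values in $\{\pm i\}$, $\Rr(\rho,\rho)=\pm1$) and checks \equref{r1coc} and \equref{r2coc} on all $64$ triples --- a finite verification, almost all instances being immediate since $\phi_X(x,y,z)=1$ unless every argument lies in $\{\sigma,\tau,\rho\}$ with a pattern in the support appearing in \equref{cocycleprim}.

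Alternatively, and as a clean way to settle existence for all three $X$ with $|X|=2$ at once, I would argue by counting. The map $\pi\colon H^3_{\rm ab}(C_2\times C_2,k^*)\to H^3(C_2\times C_2,k^*)$ is a group homomorphism with kernel $\pi^{-1}(\phi_\emptyset)$ of order $8$ (\prref{ab3cocycfromtriv}), while $|H^3_{\rm ab}(C_2\times C_2,k^*)|=|QF(C_2\times C_2,k^*)|$ equals $32$ if $i\in k$ and $8$ otherwise (\thref{EM}, \taref{QF}), so $|\im\pi|$ is $4$ or $1$. Running the ``only if'' computation on an arbitrary happy normalized abelian cocycle $\phi_X g_b h_a$ shows that $b=\Rr(\sigma,\tau)^2$ is a square and $p=(-1)^{|X|}=1$, so by \prref{A12} and \leref{4.2} one has $[\phi_X g_b h_a]=[\phi_X]$ with $|X|\in\{0,2\}$; hence $\im\pi\subseteq\{[\phi_\emptyset]\}\cup\{[\phi_Y]:|Y|=2\}$, a set of four distinct classes. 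Comparing cardinalities, $\im\pi$ equals this set precisely when $i\in k$, which yields the proposition. I expect the main obstacle to be the ``if'' direction: either the bookkeeping in the hexagon check for the explicit $\Rr$, or --- on the counting route --- making the general ``only if'' computation on $\phi_X g_b h_a$ tight enough to exclude any further class from $\im\pi$.
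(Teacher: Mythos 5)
Your argument is correct, and its ``only if'' half is essentially the paper's: the same substitutions into \equref{r1coc} and \equref{r2coc} (the paper records the full lists (\ref{eq:phiXab1})--(\ref{eq:phiXab4}); you extract just the two constraints $\Rr(\sigma,\tau)^2=1$ and $\Rr(\sigma,\tau)^2=p$, plus $\Rr(w,w)^2=\varepsilon_w$) give $p=1$ and force $i\in k$. The difference lies in the ``if'' half. The paper proceeds constructively: from (\ref{eq:phiXab1})--(\ref{eq:phiXab4}) it derives the explicit family of R-matrices \equref{phiXabfin} and asserts the (finite) verification of \equref{r1coc}--\equref{r2coc}; these explicit $\Rr$'s are then reused for Tables \ref{ta:sigmatau}--\ref{ta:taurho}, the symmetry discussion and \reref{added}. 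Your first route is the same idea but stops short of it: ``fix a compatible choice of signs'' is exactly the content of \equref{phiXabfin}, since the square conditions alone do not guarantee consistency (relations such as $\Rr(\rho,\sigma)=\mu_\sigma\Rr(\tau,\sigma)$ must also hold); moreover your relabelling reduction is not literally correct --- for an automorphism $s$ of $C_2\times C_2$ one has, e.g., $\phi_{\{\sigma,\tau\}}\circ s^{\times 3}=\phi_{\{\sigma,\rho\}}h_{-1}g_{-1}$ when $s$ swaps $\tau$ and $\rho$, so $\phi_Y\circ s^{\times 3}$ agrees with $\phi_{s^{-1}(Y)}$ only up to coboundary (harmless when $i\in k$, but it must be adjusted by an abelian coboundary before the pointwise check). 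Your counting route, however, is a genuinely different and complete proof of existence: it combines \thref{EM} and \taref{QF} ($|H^3_{\rm ab}|=32$ or $8$), the order-$8$ kernel from \prref{ab3cocycfromtriv}, and the reduction of any abelian class to a happy representative $\phi_Xg_bh_a$ with $b$ a square and $p=1$ (via \prref{A8}, \thref{A10}, \prref{A12}), so that $\im\pi$ sits inside the four distinct classes $[\phi_\emptyset],[\phi_Y]$ ($|Y|=2$) and must equal this set when $i\in k$. What the counting buys is a clean, simultaneous treatment of all three even $X$ with no hexagon bookkeeping; what it loses is the explicit description of $\pi^{-1}(\phi_X)$ that the paper's constructive proof provides and needs in the sequel.
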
 

\begin{proof}
Assume $(\phi_X, {\cal R})$ abelian for a certain ${\cal R}$ and denote $\mu_x:={\cal R}(x, x)$, 
for all $x\in \{\sigma, \tau, \rho\}$. Taking $x=y=z$ in \equref{r1coc} or \equref{r2coc} 
we get $\mu_x^2=\va_x$, for all $x\in \{\sigma, \tau, \rho\}$. If we take $x=z\not=y$ in 
\equref{r1coc} we obtain ${\cal R}(xy, x)=\phi_X(x, y, x)\mu_x{\cal R}(y, x)$, for all 
$x\not=y$ from $\{\sigma, \tau, \rho\}$. Thus, according to \equref{A10.1} we have   
\begin{equation}\eqlabel{phiXab1}
{\cal R}(\rho, \sigma)=\mu_\sigma{\cal R}(\tau, \sigma),~~
{\cal R}(\tau, \rho)=\mu_\rho\va_\rho\va_\tau{\cal R}(\sigma, \rho),~~
{\cal R}(\sigma, \tau)=\mu_\tau\va_\sigma\va_\rho{\cal R}(\rho, \tau). 
\end{equation}  
The same relations we get if we take $y=z\not=x$ in \equref{r1coc}. For $x=y\not=z$ in 
\equref{r1coc} we obtain 
$\phi_X(x, z, x)=\phi_X(x, x, z){\cal R}(x, z)^2\phi_X(z, x, x)$, 
and therefore, by \equref{A10.1} we deduce that 
\begin{equation}\eqlabel{phiXab2}
\begin{matrix}
{\cal R}(\sigma, \tau)^2=1,~~{\cal R}(\sigma, \rho)^2=\va_\sigma,~~
{\cal R}(\tau, \sigma)^2=p,\cr
{\cal R}(\tau, \rho)^2=\va_\sigma\va_\rho,~~
{\cal R}(\rho, \sigma)^2=\va_\rho\va_\tau,~{\cal R}(\rho, \tau)^2=\va_\tau.\cr
\end{matrix}
\end{equation}
Moving to \equref{r2coc}, for $z=x\not=y$ we obtain $\phi_X(x, y, x){\cal R}(x, xy)=
\mu_x{\cal R}(x, y)$, so 
\begin{equation}\eqlabel{phiXab3}
\begin{matrix}
{\cal R}(\sigma, \rho)=\mu_\sigma{\cal R}(\sigma, \tau),~~
\va_\sigma{\cal R}(\sigma, \tau)={\cal R}(\sigma, \rho)\mu_\sigma,~~
p{\cal R}(\tau, \rho)={\cal R}(\tau, \sigma)\mu_\tau,\cr
p\va_\tau{\cal R}(\tau, \sigma)={\cal R}(\tau, \rho)\mu_\tau,~~
p\va_\sigma{\cal R}(\rho, \tau)={\cal R}(\rho, \sigma)\mu_\rho,~~
\va_\tau{\cal R}(\rho, \sigma)={\cal R}(\rho, \tau)\mu_\rho.
\end{matrix}
\end{equation}
The same relations are obtained if we take $x=y\not=z$ among $\{\sigma, \tau, \rho\}$ in 
\equref{r2coc}, while for $y=z\not=x$ we obtain
$\phi_X(x, y, y)\phi_X(y, y, x)={\cal R}(x, y)^2\phi_X(y, x, y)$. This yields to 
\begin{equation}\eqlabel{phiXab4}
\begin{matrix}
{\cal R}(\sigma, \tau)^2=p,~~{\cal R}(\sigma, \rho)^2=\va_\tau\va_\rho,~~
{\cal R}(\tau, \sigma)^2=1,\cr
{\cal R}(\tau, \rho)^2=\va_\tau,~~
{\cal R}(\rho, \sigma)^2=\va_\sigma,~~
{\cal R}(\rho, \tau)^2=\va_\sigma\va_\rho.\cr
\end{matrix}
\end{equation}   
From the first equalities in \equref{phiXab2} and \equref{phiXab4} it follows that $p=1$, and so 
$\phi_X$ is necessarily even. Also, there exists $x\in\{\sigma, \tau, \rho\}$ such 
that $\va_x=-1$, so the equation $\mu_x^2=-1$ has a solution in $k$, and
$k$ contains a primitive fourth root of $1$. 

Conversely, if $\phi_X$ is even and $k$ contains a primitive fourth root of $1$, 
then $\phi_X$ is completely determined 
by $\mu_x$, $x\in\{\sigma, \tau, \rho\}$, and $\a:={\cal R}(\sigma, \tau)\in\{\pm 1\}$, cf. 
the first relation in \equref{phiXab4}. Actually, combining the relations in 
\equref{phiXab1}-\equref{phiXab4} we must have 
\begin{equation}\eqlabel{phiXabfin}
\begin{matrix}
{\cal R}(x, x)=\mu_x~~{\rm with}~~\mu_x^2=\va_x,~~\forall~~x\in\{\sigma, \tau, \rho\}~~;~~\cr
{\cal R}(\sigma, \tau)=\a\in\{\pm 1\}~~;~~
{\cal R}(\tau, \sigma)=\a\va_\rho\mu_\sigma\mu_\tau\mu_\rho~~;~~
{\cal R}(\sigma, \rho)=\a\mu_\sigma~~;~~\cr
{\cal R}(\rho, \sigma)=\a\va_\tau\mu_\tau\mu_\rho~~;~~
{\cal R}(\tau, \rho)=\a\va_\sigma\mu_\sigma\mu_\rho~~;~~
{\cal R}(\rho, \tau)=\a\mu_\tau.  \cr
\end{matrix}
\end{equation}   
Likewise, if ${\cal R}$ is defined by \equref{phiXabfin} then it can be easily checked 
that all the relations in (\ref{eq:phiXab1}-\ref{eq:phiXab4}) are satisfied, and so 
\equref{r1coc} and \equref{r2coc} are verified. 
\end{proof}

\begin{remark}
Consider an abelian $3$-cocycle $(\phi_X, {\cal R})$, and assume that the
corresponding braided monoidal structure is symmetric. If $X\neq \emptyset$,
then there exists $x\in X$, such that $\varepsilon_x=-1$ and $\Rr(x,x)=\mu_x=\pm i$,
by \equref{phiXabfin}. Then $\Rr(x,x)^{-1}=-\Rr(x,x)$, contradicting that the monoidal
structure is symmetric. We conclude that $X=\emptyset$, and ${\rm Vect}^{C_2\times C_2}$ 
admits only four types of symmetric monoidal structures,
namely the ones  corresponding to the 
abelian $3$-cocycles $I$, $AB$, $AC$ and $BC$, see \taref{trivial} for the description of
these cocycles.
\end{remark}

The abelian $3$-cocycles in \prref{ab3cocycfromnontriv} with $\a=1$ represent 
the same elements in $H^3_{\rm ab}(C_2\times C_2, k^*)$ as the ones with $\a=-1$. 
For this take $g$ as in the proof of \prref{ab3cocycfromtriv} to show that 
they are cohomologous as abelian $3$-cocycles. The abelian $3$-cocycles obtained from 
$\a=1$ are not cohomologous because of the theorem of Eilenberg and Mac Lane.

If $k$ does not contain a primitive fourth root of unity, then $\pi^{-1}(\Phi_X)=\emptyset$
for $X\neq\emptyset$, and $H^3_{\rm ab}(C_2\times C_2, k^*)\cong
C_2\times C_2\times C_2$, as described in \prref{ab3cocycfromtriv}.
Now assume that $k$ contains a primitive fourth root of unity $i$. The inverse images under
$\pi$ of $\phi_X$, $X=\{\sigma,\tau\},\{\sigma,\rho\},\{\tau,\rho\}$, each contain eight elements.
Their explicit description is given in \prref{ab3cocycfromnontriv}, and is summarized in
Tables \ref{ta:sigmatau}, \ref{ta:sigmarho} and \ref{ta:taurho}.

\begin{table}[htb]
\begin{center}
\begin{tabular}{c|r|r|r|r|r|r|r|r|}
$EM(\phi_{\{\sigma,\tau\}},\Rr)$
 & $E_1$ & $AE_1$ & $BE_1$ & $CE_1$ & 
 $ABE_1$ & $ACE_1$ & $BCE_1$ & $ABCE_1$ \\
\hline
${\cal R}(\sigma, \sigma)$ & $i$ & $i$ & $i$ & $-i$ & $i$ & $-i$ & $-i$ & $-i$\\
\hline
${\cal R}(\tau, \tau)$ & $i$ & $i$ & $-i$ & $i$ & $-i$ & $i$ & $-i$ & $-i$\\
\hline
${\cal R}(\rho, \rho)$ & $1$ & $-1$ & $1$ & $1$ & $-1$ & $-1$ & $1$ & $-1$\\
\hline
${\cal R}(\sigma, \tau)$ & $1$ & $1$ & $1$ & $1$ & $1$ & $1$ & $1$ & $1$\\
\hline 
${\cal R}(\tau, \sigma)$ & $-1$ & $1$ & $1$ & $1$ & $-1$ & $-1$ & $-1$ & $1$\\
\hline
${\cal R}(\sigma, \rho)$ & $i$ & $i$ & $i$ & $-i$ & $i$ & $-i$ & $-i$ & $-i$\\
\hline
${\cal R}(\rho, \sigma)$ & $-i$ & $i$ & $i$ & $-i$ & $-i$ & $i$ & $i$ & $-i$\\
\hline
${\cal R}(\tau, \rho)$ & $-i$ & $i$ & $-i$ & $i$ & $i$ & $-i$ & $i$ & $-i$\\
\hline
${\cal R}(\rho, \tau)$ & $i$ & $i$ & $-i$ & $i$ & $-i$ & $i$ & $-i$ & $-i$\\
\hline
\mbox{otherwise}& $1$ & $1$ & $1$ & $1$ & $1$ & $1$ & $1$ & $1$\\
\hline
\end{tabular}
\end{center}
\vspace*{3mm}
\caption{The abelian $3$-cocycles with underlying $3$-cocycle
$\phi_{\{\sigma,\tau\}}$
\talabel{sigmatau}}
\end{table}

\begin{table}[htb]
\begin{center}
\begin{tabular}{c|r|r|r|r|r|r|r|r|}
$EM(\phi_{\{\sigma,\rho\}},\Rr)$
  & $E_2$ & $AE_2$ & $BE_2$ & $CE_2$ & 
 $ABE_2$ & $ACE_2$ & $BCE_2$ & $ABCE_2$ \\
\hline
${\cal R}(\sigma, \sigma)$ & $i$ & $i$ & $i$ & $-i$ & $i$ & $-i$ & $-i$ & $-i$\\
\hline
${\cal R}(\tau, \tau)$ & $1$ & $1$ & $-1$ & $1$ & $-1$ & $1$ & $-1$ & $-1$\\
\hline
${\cal R}(\rho, \rho)$ & $i$ & $-i$ & $i$ & $i$ & $-i$ & $-i$ & $i$ & $-i$\\
\hline
${\cal R}(\sigma, \tau)$ & $1$ & $1$ & $1$ & $1$ & $1$ & $1$ & $1$ & $1$\\
\hline 
${\cal R}(\tau, \sigma)$ & $1$ & $-1$ & $-1$ & $-1$ & $1$ & $1$ & $1$ & $-1$\\
\hline
${\cal R}(\sigma, \rho)$ & $i$ & $i$ & $i$ & $-i$ & $i$ & $-i$ & $-i$ & $-i$\\
\hline
${\cal R}(\rho, \sigma)$ & $i$ & $-i$ & $-i$ & $i$ & $i$ & $-i$ & $-i$ & $i$\\
\hline
${\cal R}(\tau, \rho)$ & $1$ & $-1$ & $1$ & $-1$ & $-1$ & $1$ & $-1$ & $1$\\
\hline
${\cal R}(\rho, \tau)$ & $1$ & $1$ & $-1$ & $1$ & $-1$ & $1$ & $-1$ & $-1$\\
\hline
\mbox{otherwise}& $1$ & $1$ & $1$ & $1$ & $1$ & $1$ & $1$ & $1$\\
\hline
\end{tabular}
\end{center}
\vspace*{3mm}
\caption{The abelian $3$-cocycles with underlying $3$-cocycle
$\phi_{\{\sigma,\rho\}}$
\talabel{sigmarho}}
\end{table}

\begin{table}[htb]
\begin{center}
\begin{tabular}{c|r|r|r|r|r|r|r|r|}
$EM(\phi_{\{\tau,\rho\}},\Rr)$
  & $E_3$ & $AE_3$ & $BE_3$ & $CE_3$ & 
 $ABE_3$ & $ACE_3$ & $BCE_3$ & $ABCE_3$ \\
\hline
${\cal R}(\sigma, \sigma)$ & $1$ & $1$ & $1$ & $-1$ & $1$ & $-1$ & $-1$ & $-1$\\
\hline
${\cal R}(\tau, \tau)$ & $i$ & $i$ & $-i$ & $i$ & $-i$ & $i$ & $-i$ & $-i$\\
\hline
${\cal R}(\rho, \rho)$ & $i$ & $-i$ & $i$ & $i$ & $-i$ & $-i$ & $i$ & $-i$\\
\hline
${\cal R}(\sigma, \tau)$ & $1$ & $1$ & $1$ & $1$ & $1$ & $1$ & $1$ & $1$\\
\hline 
${\cal R}(\tau, \sigma)$ & $1$ & $-1$ & $-1$ & $-1$ & $1$ & $1$ & $1$ & $-1$\\
\hline
${\cal R}(\sigma, \rho)$ & $1$ & $1$ & $1$ & $-1$ & $1$ & $-1$ & $-1$ & $-1$\\
\hline
${\cal R}(\rho, \sigma)$ & $1$ & $-1$ & $-1$ & $1$ & $1$ & $-1$ & $-1$ & $1$\\
\hline
${\cal R}(\tau, \rho)$ & $i$ & $-i$ & $i$ & $-i$ & $-i$ & $i$ & $-i$ & $i$\\
\hline
${\cal R}(\rho, \tau)$ & $i$ & $i$ & $-i$ & $i$ & $-i$ & $i$ & $-i$ & $-i$\\
\hline
\mbox{otherwise}& $1$ & $1$ & $1$ & $1$ & $1$ & $1$ & $1$ & $1$\\
\hline
\end{tabular}
\end{center}
\vspace*{3mm}
\caption{The abelian $3$-cocycles with underlying $3$-cocycle
$\phi_{\{\tau,\rho\}}$
\talabel{taurho}}
\end{table}

\begin{remark}\relabel{added}
This is a continuation of \reref{A16}. It is easy to show that $Q:\ C_2\times C_2\to k^*$
is a quadratic form if and only if $Q(e)=1$ and $Q(\alpha)^4=1$. Assuming that 
$k$ has a primitive fourth root of $1$, we have that $QF(C_2,k^*)=C_4\cong
H^2_{\rm ab}(C_2,k^*)$. The four cocycles in $H^2_{\rm ab}(C_2,k^*)$ are
$$
(1,1),~(1,\Rr_2),~(\phi,\Rr_3),~(\phi,\Rr_4),
$$
with 
$$
\Rr_2=1-2P_\alpha\ot P_\alpha,~
\Rr_3=1-(1-i)P_\alpha\ot P_\alpha,~
\Rr_4=1-(1+i)P_\alpha\ot P_\alpha.
$$
$(1,1)$ and $(1,\Rr_2)$ give symmetries on ${\rm Vect}^{C_2}$; the two other ones
give braided non-symmetric monoidal structures. Now it is easy to calculate that
$$
t_1(1,\Rr_2)=(1,AB),~t_2(1,\Rr_2)=(1,AC),~t_3(1,\Rr_2)=(1,BC),
$$
and these are precisely the nontrivial symmetric abelian cocycles. In a similar way, we can
compute that
\begin{eqnarray*}
t_1(\phi,\Rr_3)=(\phi_{\{\tau,\rho\}}, E_3)&~;~&
t_1(\phi,\Rr_4)=(\phi_{\{\tau,\rho\}}, ABE_3);\\
t_2(\phi,\Rr_3)=(\phi_{\{\sigma,\rho\}}, E_2)&~;~&
t_2(\phi,\Rr_4)=(\phi_{\{\sigma,\rho\}}, ACE_2).
\end{eqnarray*}
$t_3(\phi,\Rr_3)$ and $t_3(\phi,\Rr_4)$ are cohomologous to
$(\phi_{\{\sigma,\tau\}}, E_1)$ and
$(\phi_{\{\sigma,\tau\}}, ABE_1)$.
\end{remark}

\section{Some applications}\selabel{appl} 
\setcounter{equation}{0}
\subsection{Quasi-Hopf algebra structures}\selabel{5.1}
We will examine the following classification problem. Let $H$ be a commutative,
cocommutative, finite dimensional Hopf algebra, in our case let $H=k[C_n]$ or 
$k[C_2\times C_2]$, where $C_n$ is the cyclic group of order $n$. 
Classify, up to gauge equivalence, all quasi-bialgebra structures on $H$. 
Recall briefly that a quasi-bialgebra is a unital 
associative algebra endowed with a coalgebra structure that is coassociative 
up to conjugation by a reassociator $\Phi\in H\ot H\ot H$. For a complete 
definition we invite the reader to consult \cite{k, maj}. 
Notice that a quasi-bialgebra structure on a commutative Hopf algebra $H$ is completely 
determined by such a reassociator, that is a normalized Harrison 3-cocycle $\Phi$ on $H$. 
Furthermore, the quasi-bialgebra $(H, \Phi)$ is gauge equivalent to $(H, 1\ot 1\ot 1)$ if and only if 
$\Phi$ is a coboundary. Since every Harrison 3-cocycle is equivalent to a normalized 
3-cocycle, it follows that our problem is equivalent to computing the third 
Harrison cohomology group $H^3_{\rm Harr}(H,k,\units)$. For the definition and 
generalities on Harrison cohomology, we refer to \cite[Sec. 9.2]{Caenepeel98}.

If $k$ contains a primitive $n$-th root of unit, respectively has characteristic not $2$, 
then the Hopf algebras $k[C_n]$ and $k[C_2\times C_2]$ are isomorphic to their dual Hopf algebras. 
This means that, for $G=C_n$ or $C_2\times C_2$, 
$$
H^3_{\rm Harr}(k[G],k,\units)\cong 
H^3_{\rm Harr}(k[G]^*,k,\units)\cong H^3(G, k^*),
$$
and we are reduced to computing group cohomology.

\begin{proposition}\prlabel{5.1}
Let $C_n=\le c\ri$ be the cyclic group of order $n$ written multiplicatively and 
$k$ a field containing a primitive $n$-th root of unit $\xi$. Then all 
the normalized Harrison $3$-cocycles $\Phi\in k[C_n]\ot k[C_n]\ot k[C_n]$ are of the form 
\[
\Phi_l=1 - \frac{1}{n^2} (1 - c^l)\ot \sum\limits_{i, j=0}^{n-1}(1 -n\d_{i, j})(\xi^j -n\d_{j, 0})c^i\ot c^j,
\]
where $l\in \{0, 1, \cdots, n-1\}$. In particular, if $n=2$, there is a unique non-trivial $3$-cocycle 
$\Phi_1= 1 - 2p_{-}\ot p_{-}\ot p_{-}$, where $p_{-}=\frac{1}{2}(1 - c)$. 
\end{proposition}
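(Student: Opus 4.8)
The plan is to transfer the problem to ordinary group cohomology by duality. Since $k$ contains the primitive $n$-th root of unity $\xi$, the characters of $C_n$ furnish a Hopf algebra isomorphism $F\colon k[C_n]\to k[C_n]^{*}$: writing $\{p_0,\dots,p_{n-1}\}$ for the basis of $k[C_n]^{*}$ dual to $\{1,c,\dots,c^{n-1}\}$ (so the $p_i$ are orthogonal idempotents with $\Delta(p_m)=\sum_{i+j\equiv m}p_i\otimes p_j$), one takes $F(c)=\sum_{i=0}^{n-1}\xi^{i}p_i$ and checks that $F$ is a bialgebra, hence Hopf, map, with inverse $F^{-1}(p_i)=e_i:=\tfrac1n\sum_{j=0}^{n-1}\xi^{-ij}c^{j}$. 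Applying $F^{\otimes 3}$ identifies the normalized Harrison $3$-cocycles of $k[C_n]$ with those of $k[C_n]^{*}$, which, under the identification $H^{3}_{\rm Harr}(k[C_n]^{*},k,\units)\cong H^{3}(C_n,k^{*})$ recalled before the statement, are (certain) normalized $3$-cocycles of $C_n$ with values in $k^{*}$. By \prref{A18} this cohomology group is $\mu_n(k)$, and the class of $q=\xi^{l}\in\mu_n(k)$ is represented by the cocycle $\phi_q$ of \equref{3coccyc}; since the normalization built into the Harrison complex of $k[C_n]$ puts exactly one cocycle in each class, every normalized Harrison $3$-cocycle is obtained by transporting a representative of $\phi_{\xi^{l}}$ back through $F$, for a unique $l\in\{0,\dots,n-1\}$.

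It then remains to compute this transport explicitly. Writing $\phi_{\xi^{l}}=\sum_{x,y,z}\phi_{\xi^{l}}(x,y,z)\,p_x\otimes p_y\otimes p_z$ and substituting $F^{-1}(p_x)=e_x$, one gets that the transported cochain equals $\tfrac1{n^{3}}\sum_{i,j,m}\bigl(\sum_{x,y,z}\phi_{\xi^{l}}(x,y,z)\,\xi^{-xi-yj-zm}\bigr)\,c^{i}\otimes c^{j}\otimes c^{m}$. The inner sum over $x$ is a geometric sum in $\xi$: by \equref{3coccyc} it equals $n$ when $i\equiv 0$ and $y+z<n$, equals $n$ when $i\equiv l$ and $y+z\ge n$, and vanishes otherwise, which is precisely what produces the prefactor $1-c^{l}$ (and forces $\Phi_0=1$, since $1-c^{0}=0$). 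The remaining double sum over $y,z$, split according to whether $y+z<n$, is again a sum of $n$-th roots of unity; evaluating it — together, if necessary, with an adjustment by a normalized Harrison $2$-coboundary to bring the answer to reduced form — yields the coefficients $(1-n\delta_{i,j})(\xi^{j}-n\delta_{j,0})$, i.e.\ the asserted formula for $\Phi_l$. For $n=2$ one has $\xi=-1$, $e_1=\tfrac12(1-c)=p_{-}$, and $\phi_{-1}$ is the nontrivial cocycle on $C_2$ (value $-1$ only at $(c,c,c)$), so the formula specializes to $\Phi_1=1-2\,p_{-}\otimes p_{-}\otimes p_{-}$, while $\Phi_0=1$.

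The main obstacle is this last step: the combinatorial bookkeeping needed to collapse the triple sum of $n$-th roots of unity to the closed form $(1-n\delta_{i,j})(\xi^{j}-n\delta_{j,0})$ and to recognize the result in the compact shape $1-\tfrac1{n^{2}}(1-c^{l})\otimes\sum_{i,j}(1-n\delta_{i,j})(\xi^{j}-n\delta_{j,0})\,c^{i}\otimes c^{j}$; selecting the correct normalized representative (equivalently, the correct $2$-coboundary correction) is where most of the labour lies. A secondary point to be handled carefully is the first step — that $F$ matches normalized Harrison $3$-cocycles on $k[C_n]$ with the relevant normalized group $3$-cocycles of $C_n$ and that exactly one such representative sits in each cohomology class — for which one appeals to the description of Harrison cohomology in \cite{Caenepeel98} and to the isomorphism $H^{3}_{\rm Harr}(k[C_n]^{*},k,\units)\cong H^{3}(C_n,k^{*})$ already recorded above.
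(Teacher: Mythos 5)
Your proposal follows essentially the same route as the paper: transport the group $3$-cocycle $\phi_{\xi^l}$ of \equref{3coccyc} through the character-induced Hopf algebra isomorphism $k[C_n]\cong k[C_n]^*$ and evaluate the resulting sums of roots of unity — this is exactly the paper's computation, and the remaining double sum over $y+z\geq n$ factors as a product of two geometric sums that directly yields $(1-n\delta_{i,j})(\xi^{j}-n\delta_{j,0})$, so the $2$-coboundary adjustment you hedge about is not needed. The only blemish is your parenthetical justification that normalization puts ``exactly one cocycle in each class'' (false as stated, since the normalized cocycles in a class form a coset of normalized coboundaries); as in the paper, the classification is to be read up to gauge equivalence, i.e.\ one computes the representatives $\Phi_l$ of the classes in $H^3_{\rm Harr}(k[C_n],k,\units)\cong H^3(C_n,k^*)\cong\mu_n(k)$.
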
 

\begin{proof}
As we have mentioned in  the Introduction, we have a bijection between $H^3(C_n, k^*)$ 
and the $n$-th roots of $1$ in $k$. So we have a cocycle for every
positive integer   $l\in\{0, \cdots, n-1\}$. 

Since $k$ contains a primitive $n$-th root of unit $\xi$, we deduce that the characteristic of 
$k$ does not divide $n$ (this follows easily from $n=(1-\xi)(1-\xi^2)\cdots (1-\xi^{n-1})$). Suppose 
$C_n=\le c\ri$, written multiplicatively, and let $\{P_1, P_c, \cdots , P_{c^{n-1}}\}$ be the basis of 
$k[C_n]^*=k^{C_n}$ 
dual to the basis $\{1, c, \cdots, c^{n-1}\}$ of $k[C_n]$. 
Define $f\in k[C_n]^*$ by $f(c^i)=\xi^i$, for all $0\leq i\leq n-1$. Then $f$ is a well defined algebra map 
and $f^j(c^s)=\xi^{js}$, for all $0\leq s\leq n-1$. Furthermore, we know from 
\cite[Exercise 4.3.6]{dnr} that 
\[
\Psi: k[C_n]\ni c^j\mapsto f^j\in k[C_n]^*~,
\]  
extended linearly, is a Hopf algebra isomorphism. Its inverse is defined by  
\[
\Psi^{-1}(P_{c^j})=\frac{1}{n}\sum\limits_{s=0}^{n-1}\xi^{(n-s)j}c^s,~~\forall~~0\leq j\leq n-1.
\]
We easily compute that
\begin{eqnarray*}
&&\hspace*{-1cm}
\sum\limits_{j=0}^{n-1}q^j\Psi^{-1}(P_{c^j})=
\frac{1}{n}\sum\limits_{s=0}^{n-1}\left(\sum\limits_{j=0}^{n-1}\xi^{lj}(\xi^{n-s})^j\right)c^s\\
&&\hspace*{5mm} 
=\frac{1}{n}\sum\limits_{s=0}^{n-1}\left(\sum\limits_{j=0}^{n-1}(\xi^{l - s})^j\right)c^s=
\frac{1}{n}\sum\limits_{j=0}^{n-1}n\d_{l, s}c^s=c^l.
\end{eqnarray*}
Thus, using the definition of $\phi_q$ from \equref{3coccyc} we have that 
any normalized $3$-cocycle $\Phi\in k[C_n]\ot k[C_n]\ot k[C_n]$ is of the form
\begin{eqnarray*}
\Phi_l:&=&\sum\limits_{u, v, s=0}^{n-1}\v_q(c^u, c^v, c^s)\Psi^{-1}(P_{c^u})\ot \Psi^{-1}(P_{c^v})\ot \Psi^{-1}(P_{c^s})\\
&=&
\sum\limits_{u=0}^{n-1}\Psi^{-1}(P_{c^u})\ot \sum\limits_{v+s < n}\Psi^{-1}(P_{c^v})\ot \Psi^{-1}(P_{c^s}) \\
&&\hspace{1.5cm}+ 
\sum\limits_{u=1}^{n-1}q^u\Psi^{-1}(P_{c^u})\ot \sum\limits_{v + s\geq n}\Psi^{-1}(P_{c^v})\ot \Psi^{-1}(P_{c^s})\\
&=&
1\ot \sum\limits_{v + s < n}\Psi^{-1}(P_{c^v})\ot \Psi^{-1}(P_{c^s}) + 
c^l\ot \sum\limits_{v + s\geq n}\Psi^{-1}(P_{c^v})\ot \Psi^{-1}(P_{c^s})\\
&=& 1 - (1 - c^l)\ot \sum\limits_{v + s\geq n}\Psi^{-1}(P_{c^v})\ot \Psi^{-1}(P_{c^s})\\
&=& 1 - (1 - c^l)\ot \sum\limits_{v=1}^{n-1}\sum\limits_{t=0}^{n-2}
\Psi^{-1}(P_{c^v})\ot \Psi^{-1}(P_{c^{n + t - v}})\\
&=& 1 - \frac{1}{n^2}(1 - c^l)\ot \sum\limits_{i, j=0}^{n-1}\sum\limits_{v=1}^{n-1}\sum\limits_{t=0}^{n-2}
\xi^{(n - i)v + (n - j)(n + t - v)}c^i\ot c^j\\
&=& 1 - \frac{1}{n^2}(1 - c^l)\ot 
\sum\limits_{i, j=0}^{n-1}\left(\sum\limits_{v=1}^{n-1}
(\xi^{j - i})^v\right)c^i\ot \left(\sum\limits_{t=0}^{n-2}(\xi^{-j})^t\right)c^j\\
&=& 1 - \frac{1}{n^2} (1 - c^l)\ot \sum\limits_{i, j=0}^{n-1}(1 -n\d_{i, j})(\xi^j -n\d_{j, 0})c^i\ot c^j, 
\end{eqnarray*}
as stated. In the case when $n=2$ we calculate 
\begin{eqnarray*}
&&\hspace*{-1cm}
\sum\limits_{i, j=0}^n (1 -2\d_{i, j})((-1)^j- 2\d_{j, 0})c^i\ot c^j\\
&&\hspace*{1cm}
=1\ot 1 - 1\ot c - c\ot 1 + c\ot c=(1- c)\ot (1 - c),
\end{eqnarray*}
to deduce that $\Phi_1=1 - 2 p_{-}\ot p_{-}\ot p_{-}$, as claimed. Note that it is 
precisely the $3$-cocycle that confers to $k[C_2]$ the unique quasi-bialgebra structure 
that is not twist equivalent to a Hopf algebra, see \cite{eg}. 
\end{proof}

Our next aim is to describe the cocycles in $H^3_{\rm Harr}(k[C_2\times C_2],k,\units)
\cong H^3(C_2\times C_2,k^*)$ more explicitly. We use the notation of the previous
Sections: $C_2\times C_2=\{e,\sigma, \tau, \rho\}$, with $\sigma\tau=\rho$.
There are three 
Hopf algebra maps $k[C_2]\to k[C_2\times C_2]$, so we immediately find three 
Harrison 3-cocycles $\Phi_x= 1-2p_{-}^x\ot p_{-}^x\ot p_{-}^x$, $x=\sigma, \tau, \rho$, 
where $p_{-}^x=\frac{1}{2}(1-x)$.

One of the isomorphisms $\Psi:\ k[C_2\times C_2]^*\cong k[C_2]^*\ot k[C_2]^*
\to k[C_2]\ot k[C_2]\cong k[C_2\times C_2]$ is the following:
\begin{eqnarray*}
\Psi(P_e)=u_e={1\over 4}(1+\sigma+\tau+\rho)&~,~&
\Psi(P_\sigma)=u_\sigma={1\over 4}(1-\sigma+\tau-\rho),\\
\Psi(P_\tau)=u_\tau={1\over 4}(1+\sigma-\tau-\rho)&~,~&
\Psi(P_\rho)=u_\rho={1\over 4}(1-\sigma-\tau+\rho).
\end{eqnarray*}
We can use this isomorphism to write down the Harrison cocycles in the
basis $\{u_e,u_\sigma,u_\tau,u_\rho\}$. It is then possible in principle to write
the cocycles as sums of monomials. Some of the cocycles can be written down
explicitly. Observing that
$$X_\sigma+X_\rho+X_{\rho,\sigma}+X_{\sigma,\rho}=
(P_\sigma+P_\rho)\ot (P_\sigma+P_\rho)\ot (P_\sigma+P_\rho)$$
and
$$\Psi(P_\sigma+P_\rho)=u_\sigma+u_\rho={1\over 2}(1-\sigma),$$
we see that
$$\Psi(\phi_{\{\sigma,\rho\}})=1-2p_-^\sigma\ot p_-^\sigma\ot p_-^\sigma=\Phi_\sigma.$$
In a similar way, we can show that
$\Psi(\phi_{\{\sigma,\rho\}})=\Phi_\tau$ and 
$\Psi(h_{-1}g_{-1}\phi_{\{\sigma,\tau\}})=\Phi_\rho$. If $-1$ has a squareroot in $k$,
then it follows that $\Phi_\rho$ is cohomologous to $\Phi_\sigma\Phi_\tau$. Note that
these observations are consistent with \reref{A16}.

\subsection{Weak braided Hopf algebra structures}\selabel{5.2}
The definition of a weak Hopf algebra can be found in \cite{bos}. For the 
definition of a weak braided Hopf algebra in a symmetric monoidal category 
we refer to \cite{avr, bulacu}. From \cite{bulacu}, we recall the following
construction of weak braided Hopf algebras in ${\rm Vect}^G$.

Let $F:\ G\times G\to k^*$ be a normalized 2-cochain (see \leref{A1}), with pointwise inverse
$F^{-1}$. Consider $\Rr_{F^{-1}}:\ G\times G\to k^*$, 
$\Rr_{F^{-1}}(x,y)=F(x,y)F(y,x)^{-1}$. Then
$(\phi_{F^{-1}}=\Delta_2(F^{-1}), {\cal R}_{F^{-1}})$ is a coboundary abelian
$3$-cocycle on $G$. Let ${\rm Vect}^G_{F^{-1}}$ be category
${\rm Vect}^G$ equipped with the 
braided monoidal structure provided by $(\Delta_2(F^{-1}), {\cal R}_{F^{-1}})$. 

Let $k_F[G]$ be the $k$-vector space $k[G]$ 
with multiplication given by the formula
$$x\bullet y=F(x, y)xy,$$
for all $x, y\in G$. $k_F[G]$ is a commutative algebra in ${\rm Vect}^G_{F^{-1}}$, 
see \cite[Corollary 2.4]{am0}.  

If $|G| \not=0$ in $k$ then
we can define a 
cocommutative coalgebra structure on $k[G]$ in ${\rm Vect}^G_{F^{-1}}$. 
$k^F[G]$, the $k$-vector space $k[G]$ 
with comultiplication and counit given by 
$$
\Delta_F(x)=\frac{1}{|G|} \sum\limits_{u\in G}F(u, u^{-1}x)^{-1}u\ot u^{-1}x
~~{\rm and}~~\va_F(x)=|G| \d_{x, e}~~,
$$
for all $x\in G$, is a cocommutative coalgebra in ${\rm Vect}^G_{F^{-1}}$,
see \cite[Prop. 3.2]{bulacu}. $k_F^F[G]$, the vector space equipped with the
algebra and coalgebra structure defined above, is a commutative and cocommutative
weak braided Hopf algebra in ${\rm Vect}^G_{F^{-1}}$, see 
\cite[Proposition 4.7]{bulacu}. The antipode $\un{S}$ is identity on $k[G]$. 

Examples of such braided Hopf algebras are the Cayley-Dickson and Clifford algebras, 
see \cite{bulacu, bulacu1}. Moreover, they are monoidal Frobenius algebras 
and monoidal co-Frobenius coalgebras in the appropriate braided monoidal category
of graded vector spaces.

We will now apply this construction in the case where $G=C_n$ and
$G=C_2\times C_2$, in order to construct more examples of weak braided Hopf algebras.
We begin with a generalization of \cite[Cor. 10]{am}, where it is shown that the map
$\phi$ in \leref{5.2} is a 3-cocycle and a coboundary in the case where $n=3$.

\begin{lemma}\lelabel{5.2}
Let $C_n=\le\sigma\ri$ be the cyclic group of order $n$ and $k$ a field containing a
primitive $n$-th root of unit $q$. Then $\phi(\sigma^a, \sigma^b, \sigma^c):=q^{abc}$ 
is a normalized $3$-cocycle on $C_n$. $\phi$ is coboundary if and only if 
$q^{\frac{n(n-1)}{2}}=1$. 
\end{lemma}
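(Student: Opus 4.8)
The plan is to verify the cocycle condition directly and then to settle the two directions of the coboundary statement by exhibiting an explicit primitive in one case and an explicit cohomology invariant in the other. First, since $q^n=1$, the quantity $q^{abc}$ depends only on $a,b,c$ modulo $n$, so $\phi$ is a well-defined map $C_n^3\to k^*$, and $\phi(\sigma^a,e,\sigma^c)=q^0=1$ shows it is normalized. To check that $\phi$ is a $3$-cocycle I would substitute $x=\sigma^a$, $y=\sigma^b$, $z=\sigma^c$, $t=\sigma^d$ into \equref{A1.1}: the left-hand side equals $q^{bcd}\,q^{a(b+c)d}\,q^{abc}$ and the right-hand side equals $q^{(a+b)cd}\,q^{ab(c+d)}$, and both exponents are $abc+abd+acd+bcd$.

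For the ``if'' direction, assume $q^{n(n-1)/2}=1$, that is, $q^{\binom{n}{2}}=1$. I would show $\phi=\Delta_2(g)$ for the $2$-cochain $g(\sigma^a,\sigma^b):=q^{a\binom{b}{2}}$. The only point needing care is that this is well defined on $C_n\times C_n$: independence of the representative of $a$ is immediate from $q^n=1$, while independence of the representative of $b$ follows from $\binom{b+n}{2}=\binom{b}{2}+\binom{n}{2}+bn$ together with $q^{\binom{n}{2}}=1$. A short exponent computation, using $\binom{b+c}{2}=\binom{b}{2}+\binom{c}{2}+bc$, then gives
\[
\Delta_2(g)(\sigma^a,\sigma^b,\sigma^c)
=q^{\,b\binom{c}{2}-(a+b)\binom{c}{2}+a\binom{b+c}{2}-a\binom{b}{2}}=q^{abc},
\]
so $\phi$ is a $3$-coboundary.

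For the converse, suppose $\phi=\Delta_2(g)$ for some $g:C_n\times C_n\to k^*$. The idea is to evaluate the product $P:=\prod_{i=0}^{n-1}\phi(\sigma,\sigma^i,\sigma)$, which is essentially the image of the class $[\phi]$ under the isomorphism $H^3(C_n,k^*)\cong\mu_n(k)$ of \prref{A18}. On the cocycle side, $P=\prod_{i=0}^{n-1}q^{i}=q^{n(n-1)/2}$. On the coboundary side, $\phi(\sigma,\sigma^i,\sigma)=g(\sigma^i,\sigma)\,g(\sigma^{i+1},\sigma)^{-1}\,g(\sigma,\sigma^{i+1})\,g(\sigma,\sigma^i)^{-1}$, and since $i\mapsto i+1$ permutes the residues modulo $n$ and $g$ is a genuine function on $C_n$ (so that $g(\sigma^n,\sigma)=g(\sigma^0,\sigma)$, and likewise in the second argument), the two telescoping products cancel and $P=1$. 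Hence $q^{n(n-1)/2}=1$. When $n$ is odd this equality is automatic, recovering the case $n=3$ treated in \cite[Cor.~10]{am}.

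I do not expect a genuine obstacle: the cocycle identity and the coboundary computation are routine manipulations of exponents. The only steps that call for an idea rather than a calculation are (i) choosing the $2$-cochain $q^{a\binom{b}{2}}$ and observing that its well-definedness on $C_n\times C_n$ is governed precisely by $q^{\binom{n}{2}}=1$, and (ii) recognizing $\prod_{i=0}^{n-1}\phi(\sigma,\sigma^i,\sigma)$ as the natural coboundary-invariant detecting the class, together with the telescoping cancellation that shows it is invariant.
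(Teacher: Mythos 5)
Your proof is correct. The well-definedness, normalization and cocycle checks, and the ``if'' direction, run parallel to the paper's: you exhibit the primitive $g(\sigma^a,\sigma^b)=q^{a\,b(b-1)/2}$, while the paper uses the mirror-image cochain $g(\sigma^a,\sigma^b)=q^{-a(a-1)b/2}$; in both cases the only delicate point is well-definedness modulo $n$, which is exactly where $q^{n(n-1)/2}=1$ enters. Where you genuinely diverge is the ``only if'' direction. The paper assumes $\phi=\Delta_2(g)$, uses that $\phi$ is normalized (via \leref{A1}) to know that $g(x,e)=g(e,y)=\beta$ is one constant, extracts from the coboundary identity at $(\sigma,\sigma^k,\sigma^{n-1})$ a recursion for $g(\sigma^{k+1},\sigma^{n-1})$, solves it by induction to get $g(\sigma^k,\sigma^{n-1})=q^{k(k-1)/2}\alpha_{n-1}\beta\alpha_{k-1}^{-1}$, and then closes the cycle at $k=n$ to force $\beta=q^{n(n-1)/2}\beta$. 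You instead evaluate the single quantity $P=\prod_{i=0}^{n-1}\phi(\sigma,\sigma^i,\sigma)$: for this $\phi$ it equals $q^{n(n-1)/2}$, while on any coboundary $\Delta_2(g)$ it telescopes to $1$ because $i\mapsto i+1$ permutes the residues mod $n$. Your argument is shorter, needs no induction, and works for an arbitrary cochain $g$ without invoking the constancy of $g(\cdot,e)$ and $g(e,\cdot)$ that the paper's choice of $\beta$ silently relies on; it also makes transparent that $q^{n(n-1)/2}$ is a cohomological invariant of $[\phi]$ (your identification of $P$ with the image under $H^3(C_n,k^*)\cong\mu_n(k)$ is a plausible but non-load-bearing aside). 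What the paper's computation buys in exchange is an explicit description of the values any primitive $g$ must take along $(\sigma^k,\sigma^{n-1})$, but that extra information is not needed for the lemma.
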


\begin{proof}
The fact that $q^n=1$ implies that  $\phi$ well defined. Indeed, if $a=a'+n$, 
$b=b'+n$ and $c=c'+n$ then $abc=n^3+ (a'+b'+c')n^2+(a'b'+a'c'+b'c')n+a'b'c'$, and so 
$\phi(\sigma^a, \sigma^b, \sigma^c)=q^{abc}=q^{a'b'c'}=\phi(\sigma^{a'}, \sigma^{b'}, \sigma^{c'})$. 
The $3$-cocycle condition reduces to the $bcd+a(b+c)d+abc=ab(c+d)+(a+b)cd$ in $\mathbb{Z}$,
which is clearly satisfied. It is also clear that $\phi$ is normalized.

If $\phi$ is a coboundary, then there exists $g:\ C_n\times C_n\to k^*$ such that
\begin{equation}\eqlabel{whenphiiscob}
g(\sigma^b, \sigma^c)g(\sigma^{a+b}, \sigma^c)^{-1}g(\sigma^a, \sigma^{b+c})g(\sigma^a, \sigma^b)^{-1}=q^{abc},
\end{equation}
for all $a, b, c\in\mathbb{Z}$. Let $\b :=g(\sigma^a, 1)=g(1, \sigma^b)$ and 
$\a_{c}:=g(\sigma, \sigma^c)$, for all $a, b, c\in \{1, \cdots, n-1\}$.
Taking $a=1$, $b=k$ and $c=n-1$ in \equref{whenphiiscob} we obtain  
$$
g(\sigma^{k+1}, \sigma^{n-1})=q^kg(\sigma^k, \sigma^{n-1})g(\sigma, \sigma^{k-1})g(\sigma, \sigma^k)^{-1},
$$
for all $k\in\mathbb{Z}$. 
By mathematical induction it follows that 
$$
g(\sigma^k, \sigma^{n-1})=q^{\frac{k(k-1)}{2}}\a_{n-1}\b\a_{k-1}^{-1},
$$ 
for all $2\leq k\leq n-1$. We then have 
\begin{eqnarray*}
\b&=&g(1, \sigma^{n-1})=g(\sigma^{(n-1)+1}, \sigma^{n-1})\\
&=&q^{n-1}g(\sigma^{n-1}, \sigma^{n-1})g(\sigma, \sigma^{n-2})
g(\sigma, \sigma^{n-1})^{-1}\\
&=&q^{n-1}q^{\frac{(n-1)(n-2)}{2}}\a_{n-1}\b\a_{n-2}^{-1}\a_{n-2}\a_{n-1}^{-1}=
q^{\frac{n(n-1)}{2}}\b, 
\end{eqnarray*}
and we conclude that $q^{\frac{n(n-1)}{2}}=1$.

Conversely, assume that $q^{\frac{n(n-1)}{2}}=1$ and consider 
$f: \mathbb{Z}\times \mathbb{Z}\ra \mathbb{Z}$, $f(x, y)=-\frac{(x-1)xy}{2}$ 
and $g: C_n\times C_n\ra k$, $g(\sigma^a, \sigma^b)=q^{f(a, b)}$. 
If $a=a'+n$ and $b=b'+n$ then it can be easily checked that 
$$
f(a, b) - f(a', b')=-a'n^2-\left(\frac{a'(a'-1)}{2} + a'b'\right)n - \frac{n^2(n-1)}{2}-\frac{n(n-1)}{2}b'.
$$
This relation together with $q^n=1$ and $q^{\frac{n(n-1)}{2}}=1$ implies that $g$ is well defined. 
A straightforward computation now shows that
$$
f(y, z) - f(x+y, z) + f(x, y+z) - f(x, y)=xyz,
$$
for all $x, y, z\in\mathbb{Z}$, proving that $\Delta_2(g)=\phi$, and $\phi$ is a coboundary.
\end{proof}

\begin{proposition}\prlabel{5.3}
Let $k$ be a field containing a primitive $n$-th root of unit $q$ such that $q^{\frac{n(n-1)}{2}}=1$ 
(consider for instance $n$ odd). If $C_n=\le \sigma\ri$ is the cyclic group of order 
$n$ generated by $\sigma$ and $F: C_n\times C_n\ra k^*$ is given by 
$F(\sigma^a, \sigma^b)=q^{-\frac{(a-1)ab}{2}}$, for all 
$0\leq a, b\leq n-1$, then $k[C_n]$ is a commutative and cocommutative 
weak braided Hopf algebra in ${\rm Vect}^{C_n}_{F^{-1}}$ via the structure 
$$\sigma^a\bullet \sigma^b=q^{-\frac{(a-1)ab}{2}}\sigma^{a + b}~~,~~
\un{\Delta}(\sigma^a)=\frac{1}{n}\sum\limits_{l=0}^{n-1}q^{(l-1)l(a-l)}\sigma^l\ot \sigma^{a-l},$$
for all $0\leq a, b\leq n-1$. The unit is $e$ and the counit is given by 
$\un{\va}(\sigma^a)=n\d_{a, 0}$, for all $0\leq a\leq n-1$; the antipode is the 
identity on $k[C_n]$. 
\end{proposition}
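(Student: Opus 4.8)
The plan is to obtain \prref{5.3} as a direct instance of the construction of commutative and cocommutative weak braided Hopf algebras in ${\rm Vect}^G_{F^{-1}}$ recalled at the beginning of this subsection (from \cite{bulacu}), applied with $G=C_n$ and with the $2$-cochain $F$ given in the statement. Concretely, three things must be checked before that machine can be switched on: that $F$ is a \emph{well-defined} function $C_n\times C_n\to k^*$; that it is a \emph{normalized} $2$-cochain in the sense of \leref{A1}; and that $n=|C_n|$ is invertible in $k$ (the latter being needed for the coalgebra part of the construction). Once these hold, the cited construction produces the weak braided Hopf algebra $k_F^F[C_n]$ in ${\rm Vect}^{C_n}_{F^{-1}}$, with identity antipode, and it only remains to substitute the explicit $F$ into the general formulas.

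For the first two points I would observe that $F$ is precisely the function $g$ built in the converse half of the proof of \leref{5.2}: indeed $F(\sigma^a,\sigma^b)=q^{f(a,b)}$ with $f(x,y)=-\frac{(x-1)xy}{2}$. Hence the computations carried out there apply verbatim: the hypotheses $q^n=1$ and $q^{\frac{n(n-1)}{2}}=1$ make $F$ independent of the choice of representatives $a,b\in\{0,\dots,n-1\}$, so $F$ is well defined, and moreover $\Delta_2(F)=\phi$ where $\phi(\sigma^a,\sigma^b,\sigma^c)=q^{abc}$; in particular $\Delta_2(F^{-1})=\phi^{-1}$ is a normalized $3$-coboundary, so $(\Delta_2(F^{-1}),\Rr_{F^{-1}})$ is a coboundary abelian $3$-cocycle on $C_n$ and ${\rm Vect}^{C_n}_{F^{-1}}$ is genuinely defined. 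Normalization of $F$ is immediate, since $F(e,\sigma^b)=F(\sigma^a,e)=q^0=1$ for all $a,b$. Finally $n\neq 0$ in $k$, because $k$ contains a primitive $n$-th root of unity $q$ (equivalently, $n=(1-q)(1-q^2)\cdots(1-q^{n-1})\neq 0$).

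With these facts in hand, the construction of \cite{bulacu} gives that $k[C_n]$, equipped with $x\bullet y=F(x,y)xy$, with $\Delta_F(x)=\frac1n\sum_{u\in C_n}F(u,u^{-1}x)^{-1}\,u\ot u^{-1}x$, with counit $\va_F(x)=n\d_{x,e}$ and with identity antipode, is a commutative and cocommutative weak braided Hopf algebra in ${\rm Vect}^{C_n}_{F^{-1}}$. The last step is then the routine unwinding of these formulas: writing $x=\sigma^a$, $y=\sigma^b$, $u=\sigma^l$ and inserting $F(\sigma^a,\sigma^b)=q^{-\frac{(a-1)ab}{2}}$ one reads off the multiplication $\sigma^a\bullet\sigma^b=q^{-\frac{(a-1)ab}{2}}\sigma^{a+b}$, while inserting $F(\sigma^l,\sigma^{a-l})^{-1}$ into $\Delta_F$ produces the stated expression for $\un{\Delta}(\sigma^a)$, the counit and antipode being immediate. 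There is no real obstacle here: the only point requiring thought is the identification of $F$ with the coboundary $2$-cochain of \leref{5.2}, which is exactly what licenses the application of the general construction; everything else is bookkeeping.
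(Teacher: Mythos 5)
Your proposal is correct and follows essentially the same route as the paper, whose proof of \prref{5.3} is exactly the observation that the statement follows from \leref{5.2} together with the general construction of $k_F^F[G]$ recalled at the start of the subsection; you merely spell out the routine checks (well-definedness and normalization of $F$, and $n\neq 0$ in $k$) that the paper leaves to the reader. One small remark: unwinding $F(\sigma^l,\sigma^{a-l})^{-1}$ gives the exponent $\tfrac{(l-1)l(a-l)}{2}$ in $\un{\Delta}(\sigma^a)$, so the formula as printed in the statement (with exponent $(l-1)l(a-l)$) appears to carry a spurious factor $2$ coming from the paper itself, not from your argument.
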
 

\begin{proof}
This follows immediately from \leref{5.2} and the general construction of $k_F^F[G]$ 
presented above. We leave the verification of the details to the reader. 
\end{proof}

The 2-cochains discussed in \prref{A12} can also be applied to construct examples of weak
braided Hopf algebras.

\begin{proposition}\prlabel{5.4}
On $k[C_2\times C_2]$ we have the following commutative and 
cocommutative weak braided Hopf algebra structure:
\begin{itemize}
\item[(i)] The multiplication table is as follows ($a\in k^*$ is a fixed scalar):
\[
\begin{tabular}{c|r|c|c|c|}
$\bullet$ & $e$ & $\sigma$ & $\tau$ & $\rho$ \\
\hline
$e$ & $e$ & $\sigma$ & $\tau$ & $\rho$\\
\hline
$\sigma$ & $\sigma$ & $a^{-1}e$ & $\rho$ & $\tau$ \\
\hline
$\tau$ & $\tau$ & $\rho$ & $a^{-1}e$ & $\sigma$ \\
\hline
$\rho$ & $\rho$ & $\tau$ & $\sigma$ & $a^{-1}e$ \\
\hline
\end{tabular}~~, 
\]
The comultiplication  is given by the formulas
\begin{eqnarray*}
&&\Delta(e)=\frac{1}{4}\left(e\ot e + a\sigma\ot\sigma + a\tau\ot\tau + a\rho\ot \rho\right)~,\\
&&\Delta(\sigma)=\frac{1}{4}\left( e\ot\sigma + \sigma\ot e + \tau\ot \rho + \rho\ot\tau\right)~,\\
&&\Delta(\tau)=\frac{1}{4}\left(e\ot \tau + \tau\ot e + \sigma\ot \rho + \rho\ot \sigma\right)~,\\
&&\Delta(\rho)=\frac{1}{4}\left(e\ot \rho + \rho\ot e + \sigma\ot\tau + \tau\ot \sigma\right)~,
\end{eqnarray*}
while the counit is given by $\va(e)=4$ and $\va(x)=0$, for $x\in \{\sigma, \tau, \rho\}$. 
The antipode is the identity on $k[C_2\times C_2]$. 

\item[(ii)] For any $d\in k^*$ the multiplication table
\[
\begin{tabular}{c|r|c|c|c|}
$\bullet$ & $e$ & $\sigma$ & $\tau$ & $\rho$ \\
\hline
$e$ & $e$ & $\sigma$ & $\tau$ & $\rho$\\
\hline
$\sigma$ & $\sigma$ & $d^{-1}e$ & $\rho$ & $d^{-1}\tau$ \\
\hline
$\tau$ & $\tau$ & $d^{-1}\rho$ & $d^{-1}e$ & $\sigma$ \\
\hline
$\rho$ & $\rho$ & $\tau$ & $d^{-1}\sigma$ & $d^{-1}e$ \\
\hline
\end{tabular}~~, 
\]
 the comultiplication given by the formulas
\begin{eqnarray*}
&&\Delta(e)=\frac{1}{4}\left(e\ot e + d\sigma\ot\sigma + d\tau\ot\tau + d\rho\ot \rho\right)~,\\
&&\Delta(\sigma)=\frac{1}{4}\left( e\ot\sigma + \sigma\ot e + \tau\ot \rho + d\rho\ot\tau\right)~,\\
&&\Delta(\tau)=\frac{1}{4}\left(e\ot \tau + \tau\ot e + d\sigma\ot \rho + \rho\ot \sigma\right)~,\\
&&\Delta(\rho)=\frac{1}{4}\left(e\ot \rho + \rho\ot e + \sigma\ot\tau + d\tau\ot \sigma\right)~,
\end{eqnarray*}
and counit $\va(e)=4$ and $\va(x)=0$, for all $x\in \{\sigma, \tau, \rho\}$, 
makes $k[C_2\times C_2]$ a weak braided Hopf algebra. 
The antipode is the identity on $k[C_2\times C_2]$.
\end{itemize}
\end{proposition}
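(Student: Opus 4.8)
The plan is to recognise both structures as instances of the construction of $k_F^F[G]$ recalled above, applied to $G=C_2\times C_2$ for two suitable normalized $2$-cochains $F$. Recall that for \emph{any} normalized $2$-cochain $F:\ G\times G\to k^*$ the vector space $k[G]$ carries a commutative and cocommutative weak braided Hopf algebra structure $k_F^F[G]$ in the braided monoidal category ${\rm Vect}^G_{F^{-1}}$, with product $x\bullet y=F(x,y)xy$, coproduct $\Delta_F(x)=\frac{1}{|G|}\sum_{u\in G}F(u,u^{-1}x)^{-1}u\ot u^{-1}x$, counit $\va_F(x)=|G|\d_{x,e}$ and antipode $\Id$; here $|G|=4$ is invertible in $k$. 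The braiding of ${\rm Vect}^G_{F^{-1}}$ is ${\cal R}_{F^{-1}}(x,y)=F(x,y)F(y,x)^{-1}$, and commutativity and cocommutativity are to be understood with respect to it. So it suffices, for each of (i) and (ii), to choose $F$ so that the associated $\bullet$ and $\Delta_F$ are precisely the displayed formulas; the counit and antipode then agree automatically.

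For (i), I take $F$ with $F(\sigma,\sigma)=F(\tau,\tau)=F(\rho,\rho)=a^{-1}$ and $F(x,y)=1$ for every other pair. This is the pointwise inverse of the $2$-cochain used in the proof of \prref{A12} to write $h_a$ as a coboundary, so $\Delta_2(F^{-1})=h_a$, and since $F$ is symmetric we get ${\cal R}_{F^{-1}}\equiv 1$; thus ${\rm Vect}^G_{F^{-1}}$ is symmetric monoidal with associator $h_a$. Reading off $x\bullet y=F(x,y)xy$ produces the multiplication table, and a direct evaluation of $\Delta_F(x)=\frac{1}{4}\sum_{u\in G}F(u,u^{-1}x)^{-1}u\ot u^{-1}x$ — whose only non-unit coefficients are the $F(u,u^{-1})^{-1}=a$ occurring in $\Delta_F(e)$ — reproduces the four comultiplication formulas.

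For (ii), I take $F$ with $F(\sigma,\sigma)=F(\tau,\tau)=F(\rho,\rho)=d^{-1}$, $F(\tau,\sigma)=F(\sigma,\rho)=F(\rho,\tau)=d^{-1}$, $F(\sigma,\tau)=F(\tau,\rho)=F(\rho,\sigma)=1$, and $F=1$ on all pairs meeting $e$. This is the pointwise inverse of the $2$-cochain appearing in the proof of \prref{A12} that witnesses $g_{d^2}$ as a coboundary, so $\Delta_2(F^{-1})=g_{d^2}$, and now ${\cal R}_{F^{-1}}$ is non-trivial (for instance ${\cal R}_{F^{-1}}(\sigma,\tau)=d$). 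Again $x\bullet y=F(x,y)xy$ gives the multiplication table, and evaluating $\Delta_F$ as above produces the stated comultiplication, the scalars $d$ now appearing because $F$ is no longer symmetric in its two arguments. Commutativity, which here reads $x\bullet y={\cal R}_{F^{-1}}(x,y)(y\bullet x)$, together with the dual cocommutativity, is guaranteed by \cite{bulacu}, even though the table in (ii) is not symmetric in the naive sense.

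I expect no genuine obstacle: the substance is entirely contained in the cited construction, and what remains is the bookkeeping of substituting the two explicit cochains into the formulas for $\bullet$, $\Delta_F$ and $\va_F$. The only point demanding some care is tracking the second argument $u^{-1}x$ (and the inverse exponent) inside $\Delta_F$, together with the observation that for (ii) the words ``commutative'' and ``cocommutative'' refer to the braiding ${\cal R}_{F^{-1}}$ of ${\rm Vect}^G_{F^{-1}}$ rather than to the plain flip.
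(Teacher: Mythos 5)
Your proposal is correct and follows essentially the same route as the paper: both proofs identify the two structures with $k_{g^{-1}}^{g^{-1}}[C_2\times C_2]$ for the explicit $2$-cochains $g$ from the proof of \prref{A12} that exhibit $h_a$, respectively $g_{d^2}$, as coboundaries (your $F$ is exactly the pointwise inverse $g^{-1}$ used there), and then read off $\bullet$, $\Delta_F$, $\va_F$ and the braiding ${\cal R}_{F^{-1}}$. Your remark that commutativity and cocommutativity in (ii) are meant with respect to the nontrivial braiding ${\cal R}_{F^{-1}}$ matches the paper's closing observation about ${\cal R}_g$.
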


\begin{proof}
(i) We use the fact that $h_a$ is a coboundary, for all $a\in k^*$. Actually, 
if we take $g: C_2\times C_2\ra k^*$ defined by $b_i=1$, for all $1\leq i\leq 6$, 
$a_1=a_2=a_3=a$ and $c=1$ then we have seen that $\Delta_2(g)=h_a$. Consequently, 
$g$ is a $2$-cochain on $C_2$. A simple inspection shows us that the claimed structure 
on $k[C_2\times C_2]$ from (i) coincides with that of $k_{g^{-1}}^{g^{-1}}[C_2\times C_2]$, so 
we are done. Note that we have a weak braided Hopf algebra in ${\rm Vect}^{C_2\times C_2}_{g}$, 
and that $\Delta_2(g)=h_a$, while ${\cal R}_g(x, y)=1$, for all $x, y\in C_2\times C_2$. 

(ii) We proceed as above, but now we take $b=d^2$, for some $d\in k^*$. Then $g_{d}$ 
is coboundary, cf. \prref{A12}. More precisely, if $g: C_2\times C_2\ra k^*$ 
is defined by $a_1=a_2=a_3=b_4=b_5=b_6=d$ and $b_1=b_2=b_3=c=1$ then $\Delta_2(g)=g_b=g_{d^2}$. 
We leave to the reader to check that the second weak braided Hopf algebra structure 
from the statement is precisely the one on $k_{g^{-1}}^{g^{-1}}[C_2\times C_2]$. We notice only 
that in this case we have the braided monoidal structure on ${\rm Vect}^{C_2\times C_2}$ 
produced by $g$, that is $\Delta_2(g)=g_{d^2}$ and 
\begin{eqnarray*}
&&{\cal R}_g(x, x)={\cal R}_g(e, y)={\cal R}_g(z, e)=1~~,~~\forall~~x, y, z\in C_2\times C_2~~,\\
&&{\cal R}_g(\sigma, \tau)={\cal R}_g(\tau, \rho)={\cal R}_g(\rho, \sigma)=d~~{\rm and}\\
&&{\cal R}_g(\tau, \sigma)={\cal R}_g(\sigma, \rho)={\cal R}_g(\rho, \tau)=d^{-1}~.
\end{eqnarray*}  
\end{proof}

 
\end{document}